\newtheorem{theo}{Theorem}[section]
\newtheorem{lemm}[theo]{Lemma}
\newtheorem{prop}[theo]{Proposition}
\newtheorem{coro}[theo]{Corollary}
\theoremstyle{definition}
\newtheorem{defi}[theo]{Definition}
\newtheorem{example}[theo]{Example}
\newtheorem{nota}[theo]{Notation}
\theoremstyle{remark}
\newtheorem{rema}[theo]{Remark}
\newcommand{\bahram}{\textbf{(CR)}}
\subjclass[2000]{Primary: 19K14, 19K99, 46L35, 46L80, Secondary: 05A15, 05A16, 05A17, 15A36, 17B10, 17B20, 37B05, 54H20}
\begin{document}
\title[$K$-theory of Furstenberg transformation group $C^*$-algebras]
{$K$-theory of Furstenberg transformation group $C^*$-algebras
%(\textnormal{Only a Draft})
}
\author[Kamran Reihani]{Kamran Reihani}
\address{Department of Mathematics\\University of Kansas\\Lawrence, KS 66045-7594}
\email{reihani@math.ku.edu}
%\date{March 27, 2010}
\maketitle

\begin{abstract}
The paper studies the $K$-theoretic invariants of the crossed product $C^{*}$-algebras associated with an important family of homeomorphisms of the tori $\Bbb{T}^{n}$ called {\em Furstenberg transformations}.  Using the Pimsner-Voiculescu theorem, we prove that given $n$, the $K$-groups of those crossed products, whose corresponding $n\times n$ integer matrices are unipotent of maximal degree, always have the same rank $a_{n}$.  We show using the theory developed here, together with two computing programs - included in an appendix - that a claim made in the literature about the torsion subgroups of these $K$-groups is false. Using the representation theory of the simple Lie algebra $\frak{sl}(2,\Bbb{C})$, we show that, remarkably, $a_{n}$ has a combinatorial significance.  For example, every $a_{2n+1}$ is just the number of ways that $0$ can be represented as a sum of integers between $-n$ and $n$ (with no repetitions).  By adapting an argument of van Lint (in which he answered a question of Erd\"{o}s), a simple, explicit formula for the asymptotic behavior of the sequence $\{a_{n}\}$ is given.  Finally, we describe the order structure of the $K_{0}$-groups of an important class of Furstenberg crossed products, obtaining their complete Elliott invariant using classification results of H. Lin and N. C. Phillips. 
\end{abstract}

%%%%%%%%%%%%%%%%%%

\tableofcontents

%%%%%%%%%%%%%%%%%%

\vspace{.5cm}

\section{Introduction}

\noindent Furstenberg transformations were introduced in \cite{hF61}
as the first examples of homeomorphisms of the tori, which under some necessary and sufficient 
conditions are minimal and uniquely ergodic. In some sense, they generalize
the irrational rotations on the circle. They also appear in certain applications of ergodic theory to number theory 
(e.g. in Diophantine approximation \cite{hF81}), and sometimes are called \emph{skew product transformations} or \textit{compound skew translations} of the tori. The terminology ``Furstenberg transformation group $C^*$-algebra" is what 
we would like to use in this paper to call the crossed products associated with Furstenberg transformations, and we will denote them by ${\mathscr F}_{\theta,{\boldsymbol f}}$. There have been several contributions to the computations of $K$-theoretic invariants for some examples of these $C^*$-algebras in the literature (see \cite{rJ86,kK00,pM93,ncp07,sW02} to name a few). However, a more general study of such invariants for these $C^*$-algebras has not been available to the best of our knowledge.\\

\begin{rema}
In independent (unpublished) work \cite{rJ86}, R. Ji studied the $K$-groups of the $C^{*}$-algebras ${\mathscr F}_{\theta,{\boldsymbol f}}$ (denoted by $A_{F_{f,\theta}}$ in there) associated with the descending affine Furstenberg transformations (denoted by ${F_{f,\theta}}$ in there) on the tori.
He comments that ``explicitly computing the $K$-groups of $\mathcal{C}(\Bbb T^n)\rtimes_K\Bbb Z$ [$A_{F_{f,\theta}}$ for $\theta=0$] is still not an easy matter''. Moreover, he gives no information about the ranks of the $K$-groups or order structure of $K_0$ in general, which are studied in the present paper.  As we shall see in Remark \ref{rema:2} below, the claim that he makes about the form of the torsion subgroup of $K_*({\mathscr F}_{\theta,{\boldsymbol f}})$ is unfortunately not correct.
\end{rema}
~\\
\noindent From the $C^*$-algebraic point view, when a Furstenberg 
transformation is minimal and uniquely ergodic, the associated transformation group $C^*$-algebra is simple
and has a unique tracial state with a dense tracial range of the $K_0$-group in the real line. Because of this, these $C^{*}$-algebras fit well into the classification program of G. Elliott by finding their $K$-theoretic invariants. In fact, in the class of transformation group $C^*$-algebras of uniquely ergodic minimal homeomorphisms on 
infinite compact metric spaces, $K$-theory is a complete invariant. More precisely, suppose that 
$X$ is an infinite compact metric space with finite covering dimension and \text{$h:X 
\rightarrow X$} is a uniquely ergodic minimal homeomorphism, and put $A:=\mathcal{C}(X)\rtimes_h 
\Bbb{Z}$. Let $\tau$ be the trace induced by the unique invariant probability measure. Then $\tau$
is the unique tracial state on $A$. Let $\tau_*:K_0(A)\rightarrow\Bbb R$ be the induced homomorphism 
on $K_0(A)$ and assume that $\tau_*K_0(A)$ is dense in $\Bbb{R}$. Then the $4$-tuple
$$(K_0(A),K_0(A)_{+},[1_A],K_1(A))$$
is a complete algebraic invariant (called the \emph{Elliott invariant} of 
$A$) \cite[Corollary 4.8]{LP10}. In this case, $A$ has stable rank one, real rank zero and tracial 
topological rank zero in the sense of H. Lin \cite{hL00}. The order on $K_0(A)$ is also 
determined by the unique trace $\tau$, in the sense that an element $x\in K_0(A)$ is positive if
and only if either $x=0$ or $\tau_*(x)>0$ \cite{qLP97,ncP07}. This implies, in particular, that the torsion subgroup of 
$K_0(A)$ contributes nothing interesting to the order information. In other words,
the order on $K_0(A)$ is determined by the order on the free part.
We will study the order structure of $K_0({\mathscr F}_{\theta,{\boldsymbol f}})$ in 
Section \ref{order}.\\

\noindent In order to compute the $K$-groups of a crossed product of the form $\mathcal{C}(\Bbb T^n)\rtimes_\alpha\Bbb Z$ in general, we make use of the algebraic properties of  $K_*(\mathcal{C}(\Bbb{T}^n))$ in Section \ref{sec:general}. More precisely, $K_*(\mathcal{C}(\Bbb{T}^n))$
is an exterior algebra over $\Bbb{Z}^n$ with a certain natural basis, 
and the induced automorphism $\alpha_*$ on $K_*(\mathcal{C}(\Bbb{T}^n))$
is in fact a ring automorphism, which makes computations much easier. In 
fact, it is shown in Theorem \ref{theo:1} that the problem of finding 
the $K$-groups of the transformation group $C^*$-algebra of a homeomorphism of the $n$-torus 
is completely computable in the sense that one only needs to calculate 
the kernels and cokernels of a finite number of integer matrices. These $K$-groups are finitely generated with the same rank
(see Corollary \ref{coro:3}). In the special case of Anzai transformation group $C^*$-algebras $\mathscr{A}_{n,\theta}$ associated with Anzai transformations on the $n$-torus, we denote this common rank by $a_n$, which we will study in detail in this paper. It is proved in Theorem \ref{rank} that $a_n$ is the common rank of the $K$-groups of a larger class of transformation group $C^*$-algebras, including the $C^*$-algebras associated with Furstenberg transformations on $\Bbb{T}^n$. We describe $a_n$ as the constant term in a certain Laurent polynomial (Theorem \ref{theo:6}). Then we study the combinatorial properties of the sequence $\{a_n\}$, which leads to a simple asymptotic
formula.\\

%An explicit formula for the torsion subgroups of the $K$-groups of these 
%algebras seems much more challenging to find.

\noindent To present the results and proofs of this paper we 
need some definitions about transformations on the tori and the corresponding 
\text{$C^*$-crossed products.} Throughout this paper, $\mathbb{T}^n$ denotes the $n$-dimensional torus with coordinates 
$(\zeta_1,\zeta_2,\ldots,\zeta_n)$.

\begin{defi}
An \textbf{affine transformation} on $\Bbb{T}^n$ is given by
\begin{equation*}
\beta(\zeta_1,\zeta_2,\ldots,\zeta_n)=(e^{2 \pi i t_1} \zeta_1^{b_{11}}\ldots 
\zeta_n^{b_{1n}},
e^{2 \pi i t_2} \zeta_1^{b_{21}}\ldots \zeta_n^{b_{2n}},\ldots,
e^{2 \pi i t_n} \zeta_1^{b_{n1}}\ldots \zeta_n^{b_{nn}}),
\end{equation*}
where $\boldsymbol{t}:=(t_1,t_2,\ldots,t_n)\in(\Bbb{R}/\Bbb Z)^n $ and 
$\mathsf{B}:=[b_{ij}]_{n \times n}\in
\mathrm{GL}(n,\Bbb{Z})$. We identify the pair $(\boldsymbol{t},\mathsf{B})$ 
with $\beta$.
\end{defi}

\noindent Note that any automorphism of $\Bbb{T}^n$ followed by a rotation can be 
expressed in such a fashion. The set of affine transformations on $\Bbb{T}^n$ form a group 
$\mathrm{Aff}(\Bbb{T}^n)$, which can be identified with the semidirect product $(\Bbb{R}/\Bbb Z)^n 
\rtimes\mathrm{GL}(n,\Bbb{Z})$. More precisely, for two affine transformations 
$\beta=(\boldsymbol{t},\mathsf{B})$
and $\beta'=(\boldsymbol{t}',\mathsf{B'})$ on $\Bbb{T}^n$, we have
$$\beta\circ\beta'=(\boldsymbol{t}+\mathsf{B}\boldsymbol{t}',
\mathsf{B}\mathsf{B}')
~~\text{and}~~\beta^{-1}=(-\mathsf{B}^{-1}\boldsymbol{t},\mathsf{B}^{-1}).$$
(In the expression $\mathsf{B}\boldsymbol{t}$, $\boldsymbol{t}$ is a  column 
vector, but for convenience we write it as a row vector.)\\

\noindent We remind the reader of an important fact before giving the next definition. Consider homotopy classes 
of continuous functions from $\mathbb{T}^n$ to $\Bbb{T}$. It is well known that in each class there is a unique ``linear" function $(\zeta_1,\ldots,\zeta_n)\mapsto\zeta_1^{b_1}\ldots \zeta_n^{b_n}$ for some $b_1,\ldots,b_n \in\mathbb{Z}$. More precisely, every continuous function $f:\Bbb {T}^n\to\Bbb T$ can be written as
$$f(\zeta_1,\ldots,\zeta_n)=e^{2\pi i g(\zeta_1,\ldots,\zeta_n)}\zeta_1^{b_1}\ldots \zeta_n^{b_n},$$
for some continuous function $g:\Bbb {T}^n\to\Bbb R$ and unique integer exponents $b_1,\ldots,b_n$. In particular, 
the cohomotopy group $\pi^1(\Bbb T^n)$ is isomorphic to $\Bbb Z^n$.
Following \cite[p. 35]{hF81}, we denote the exponent $b_i$, which is uniquely determined by 
the homotopy class of $f$, as $b_i=A_i[f]$.\\

\begin{defi}\label{furst-anzai}
 We define the following transformations in accordance with \cite{rJ86}.
\begin{itemize}
\item [\textnormal{(a)}]
A \textnormal{\textbf{Furstenberg transformation}} $\varphi_{\theta,{\boldsymbol f}}$ on 
$\Bbb{T}^n$ is given by
\begin{equation*}
\varphi^{-1}_{\theta,{\boldsymbol f}}(\zeta_1,\zeta_2,\ldots,\zeta_n)=\left(e^{2 \pi i \theta} 
\zeta_1,f_1(\zeta_1)\zeta_2,f_2(\zeta_1,\zeta_2)\zeta_3,
\ldots, f_{n-1}(\zeta_1,\ldots,\zeta_{n-1})\zeta_n \right),
\end{equation*}
where $\theta$ is a real number, each
$f_i:\Bbb{T}^i \rightarrow\Bbb{T}$ is a continuous function with
$A_i[f_i]\neq 0$ for $i=1,\ldots,n-1$, and ${\boldsymbol f}=(f_1,\ldots,f_{n-1})$.
\item [\textnormal{(b)}]
An \textnormal{\textbf{affine Furstenberg transformation}} $\alpha$ on $\Bbb{T}^n$ is 
given by
$$\alpha^{-1}(\zeta_1,\zeta_2,\ldots,\zeta_n)=(e^{2 \pi i \theta} \zeta_1,\zeta_1^{b_{12}}\zeta_2,
\zeta_1^{b_{13}}\zeta_2^{b_{23}} \zeta_3,\ldots
,\zeta_1^{b_{1n}}\zeta_2^{b_{2n}}\ldots \zeta_{n-1}^{b_{n-1,n}}\zeta_n),$$
where $\theta$ is a real number and the exponents $b_{ij}$
are integers and $b_{i,i+1}\neq 0$ for $i=1,\ldots,n-1$.
\item [\textnormal{(c)}]
An \textnormal{\textbf{ascending Furstenberg transformation}} $\alpha$ on $\Bbb{T}^n$ 
is given by
$$\alpha^{-1}(\zeta_1,\zeta_2,\ldots,\zeta_n)=(e^{2 \pi i \theta} \zeta_1,\zeta_1^{k_1} \zeta_2,
\zeta_2^{k_2} \zeta_3,\ldots
, \zeta_{n-1}^{k_{n-1}} \zeta_n),$$
where $\theta$ is a real number and the exponents $k_i$ are nonzero
integers and $k_i \mid k_{i+1}$ for $i=1,\ldots,n-2$.
\item [\textnormal{(d)}]
In \textnormal{(c)}, if $k_i=1$ for $i=1,\ldots,n-1$, the transformation is 
called an \textnormal{\textbf{Anzai transformation}} $\sigma_{n,\theta}$ on $\Bbb{T}^n$. Thus it is 
given by
$$\sigma_{n,\theta}^{-1}(\zeta_1,\zeta_2,\ldots,\zeta_n)=(e^{2 \pi i \theta} \zeta_1,\zeta_1 \zeta_2,\ldots,\zeta_{n-1} 
\zeta_n),$$
where $\theta$ is a real number. We usually drop the indices $n$ and $\theta$ and write only $\sigma$ for more convenience.
\end{itemize}
\end{defi}

\noindent \noindent Note that one can easily verify that $\varphi_{\theta,{\boldsymbol f}}$ is a homeomorphism. Also in the above definition, we have converted
``descending", which is used in \cite[Definition 2.16]{rJ86}, to ``ascending" since the order of coordinates there is opposite to ours. \\

\noindent For certain Furstenberg transformations on $\Bbb{T}^n$ we have the following theorem.

\begin{theo}\label{Furstenberg}
\textnormal{(\cite{hF61}, 2.3)}
If $\theta$ is irrational, then $\varphi_{\theta,{\boldsymbol f}}$ defines a minimal dynamical
system on $\Bbb{T}^n$. If in addition, each $f_i$ satisfies a uniform
Lipschitz condition in $\zeta_i$ for $i=1,\ldots,n-1$, then $\varphi_{\theta,{\boldsymbol f}}$ is a uniquely ergodic
transformation and the unique invariant measure is the normalized Lebesgue measure on $\Bbb{T}^n$. In particular, every affine Furstenberg transformation defines a minimal and uniquely ergodic dynamical system if $\theta$ is 
irrational.
\end{theo}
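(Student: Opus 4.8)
This is essentially Furstenberg's original theorem, and the plan is to reproduce the classical induction on $n$ that exploits the skew-product structure of $\varphi_{\theta,\boldsymbol f}$. Write $\Bbb T^n=\Bbb T^{n-1}\times\Bbb T$ and let $\pi\colon\Bbb T^n\to\Bbb T^{n-1}$ be the projection onto the first $n-1$ coordinates. Reading off the formula for $\varphi^{-1}_{\theta,\boldsymbol f}$ one sees at once that $\pi\circ\varphi_{\theta,\boldsymbol f}=\varphi_{\theta,\boldsymbol f'}\circ\pi$, where $\boldsymbol f'=(f_1,\dots,f_{n-2})$ and $\varphi_{\theta,\boldsymbol f'}$ is a Furstenberg transformation on $\Bbb T^{n-1}$ of the same type; moreover $\varphi_{\theta,\boldsymbol f}$ acts as a skew product $(x,\zeta_n)\mapsto(\varphi_{\theta,\boldsymbol f'}(x),\phi(x)\zeta_n)$, where $\phi\colon\Bbb T^{n-1}\to\Bbb T$ is a continuous cocycle built from $f_{n-1}$ (so that $A_{n-1}[\phi]=\pm A_{n-1}[f_{n-1}]$, using that precomposition with the unipotent lower-triangular map $\varphi_{\theta,\boldsymbol f'}$ leaves the functional $A_{n-1}[\,\cdot\,]$ unchanged). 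Thus $(\Bbb T^n,\varphi_{\theta,\boldsymbol f})$ is an isometric circle extension of $(\Bbb T^{n-1},\varphi_{\theta,\boldsymbol f'})$. The base case $n=1$ is the rotation $\zeta_1\mapsto e^{2\pi i\theta}\zeta_1$, which for irrational $\theta$ is minimal and uniquely ergodic with normalized Lebesgue measure by the classical theory.

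For the minimality step I would invoke the standard criterion for circle extensions of a minimal system (as in \cite{hF61}): if $(\Bbb T^{n-1},\varphi_{\theta,\boldsymbol f'})$ is minimal, then the extension $(\Bbb T^n,\varphi_{\theta,\boldsymbol f})$ fails to be minimal precisely when there exist a nonzero integer $m$ and a continuous $\psi\colon\Bbb T^{n-1}\to\Bbb T$ solving the cohomological equation $\phi^m=(\psi\circ\varphi_{\theta,\boldsymbol f'})\,\psi^{-1}$. One direction is the observation that $(x,\zeta_n)\mapsto\zeta_n^{\,m}\overline{\psi(x)}$ is then a nonconstant continuous invariant function; the other is the usual coset argument on the fibres of a proper closed invariant set over the minimal base. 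It remains to rule this equation out, which is immediate: applying $A_{n-1}[\,\cdot\,]$ to both sides (using $\pi^1(\Bbb T^{n-1})\cong\Bbb Z^{n-1}$ and the homomorphism property of $f\mapsto A_\bullet[f]$), the right-hand side has $A_{n-1}=0$ while the left-hand side has $A_{n-1}=\pm\,m\,A_{n-1}[f_{n-1}]\neq0$ by hypothesis. Hence the extension is minimal, and minimality of every $\varphi_{\theta,\boldsymbol f}$ with $\theta$ irrational — in particular of every affine Furstenberg transformation, for which $f_i$ is a monomial with $A_i[f_i]=b_{i,i+1}\neq0$ — follows by induction.

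For unique ergodicity under the extra Lipschitz hypothesis I would argue again by induction. Let $\mu$ be a $\varphi_{\theta,\boldsymbol f}$-invariant Borel probability measure; then $\pi_*\mu$ is $\varphi_{\theta,\boldsymbol f'}$-invariant, hence equals $\lambda_{n-1}$ (normalized Lebesgue) by the inductive hypothesis. Disintegrating $\mu$ over $\lambda_{n-1}$ yields fibre measures $\mu_x$ on the circle, and invariance becomes $\widehat{\mu_{\varphi_{\theta,\boldsymbol f'}(x)}}(m)=\overline{\phi(x)^{\,m}}\,\widehat{\mu_x}(m)$ for each $m\in\Bbb Z$. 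In particular $x\mapsto|\widehat{\mu_x}(m)|$ is $\varphi_{\theta,\boldsymbol f'}$-invariant, hence constant $\lambda_{n-1}$-a.e.\ since the base is ergodic for $\lambda_{n-1}$. If that constant vanishes for every $m\neq0$, then $\mu_x=$ Lebesgue a.e.\ and $\mu=\lambda_n$; otherwise, for some $m\neq0$ the normalized coefficient $x\mapsto\widehat{\mu_x}(m)/|\widehat{\mu_x}(m)|$ is a measurable solution $\psi$ of the same equation $\phi^m=(\psi\circ\varphi_{\theta,\boldsymbol f'})\psi^{-1}$. The Lipschitz condition is used here to upgrade this measurable $\psi$ to a continuous one (a regularity fact for transfer functions of smooth enough cocycles over these isometric base systems), whereupon the winding-number contradiction of the minimality step applies; therefore $\mu=\lambda_n$ is the unique invariant measure, and since each $f_i$ of an affine Furstenberg transformation is smooth, the final assertion follows. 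The main obstacle is precisely this measurable-to-continuous regularity step: it is exactly where the Lipschitz hypothesis is indispensable, since without it Furstenberg's minimal but non-uniquely-ergodic circle extensions of an irrational rotation occur; everything else is bookkeeping with the triangular structure and with the functionals $A_i$.
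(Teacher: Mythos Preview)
The paper does not supply its own proof of this theorem; it is stated with the attribution ``(\cite{hF61}, 2.3)'' and used as a black box, so there is no proof in the paper to compare against. What you have written is an outline of Furstenberg's original argument, and your treatment of minimality is correct and complete: the induction via circle extensions, the cohomological criterion, and the winding-number obstruction using the homomorphism $A_{n-1}$ are exactly how the proof goes.

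Your outline for unique ergodicity, however, has a genuine gap at the step you yourself flag as ``the main obstacle.'' The claim that a \emph{measurable} solution $\psi$ of $\phi^m=(\psi\circ\varphi_{\theta,\boldsymbol f'})\psi^{-1}$ can be upgraded to a continuous one ``for smooth enough cocycles over these isometric base systems'' is not a standard regularity fact and is in general false: Liv\v{s}ic-type theorems require hyperbolicity, and over isometric bases (rotations, nilrotations, Furstenberg systems) measurable coboundaries of smooth cocycles need not be continuous coboundaries---this is precisely the small-divisor phenomenon. Nor can one simply apply the degree argument to a merely measurable $\psi$. Your Fourier-coefficient set-up is correct, but Furstenberg's actual route from this point is different: rather than seeking a continuous $\psi$, he shows directly via Weyl's criterion and van der Corput's inequality that the Birkhoff averages of characters converge uniformly to their integrals, and the uniform Lipschitz condition on $f_i$ in $\zeta_i$ enters exactly when one differences the exponential sums in the van der Corput step. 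If you want to salvage your cohomological approach you would need an independent argument ruling out measurable solutions (which, incidentally, does go through by an $L^2$ Fourier argument in the \emph{affine} case where $\phi$ is a monomial in $\zeta_{n-1}$, but not for general $f_{n-1}$); otherwise you should replace the regularity step with the van der Corput equidistribution argument.
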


\noindent As a conclusion, we have the following result for the Furstenberg
transformation group $C^*$-algebra
${\mathscr F}_{\theta,{\boldsymbol f}}:=\mathcal{C}(\Bbb{T}^n)\rtimes_{\varphi_{\theta,{\boldsymbol f}}}\mathbb{Z}$
as introduced in \cite{rJ86}.

\begin{coro}\label{coro:1}
${\mathscr F}_{\theta,{\boldsymbol f}}=\mathcal{C}(\Bbb{T}^n)\rtimes_{\varphi_{\theta,{\boldsymbol f}}}\mathbb{Z}$ is 
a simple $C^*$-algebra for irrational $\theta$. If in addition, each $f_i$ satisfies a uniform
Lipschitz condition in $\zeta_i$ for $i=1,\ldots,n-1$, then ${\mathscr F}_{\theta,{\boldsymbol f}}$ has 
a unique tracial state.
\end{coro}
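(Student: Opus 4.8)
\noindent\emph{Proof strategy.} The plan is to read off both assertions from Theorem~\ref{Furstenberg} together with the standard structure theory of crossed products by $\Bbb Z$. First, for the simplicity claim, I would note that $\Bbb T^n$ is an infinite compact metrizable space and that $\varphi_{\theta,{\boldsymbol f}}$ is a homeomorphism (already observed after Definition~\ref{furst-anzai}). For irrational $\theta$, Theorem~\ref{Furstenberg} gives that $\varphi_{\theta,{\boldsymbol f}}$ is minimal. Since $\Bbb T^n$ is infinite, minimality forces the $\Bbb Z$-action generated by $\varphi_{\theta,{\boldsymbol f}}$ to have no periodic points: a periodic point would produce a finite orbit, i.e.\ a proper nonempty closed invariant subset, contradicting minimality. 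Hence $\varphi_{\theta,{\boldsymbol f}}^{k}$ is fixed-point free for every $k\neq 0$, so the action is free, and one invokes the classical fact that the crossed product of an infinite compact Hausdorff space by a minimal (equivalently here, free and minimal) $\Bbb Z$-action is simple. This yields simplicity of ${\mathscr F}_{\theta,{\boldsymbol f}}$.

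For the uniqueness of the tracial state I would add the Lipschitz hypothesis, under which Theorem~\ref{Furstenberg} guarantees that $\varphi_{\theta,{\boldsymbol f}}$ is uniquely ergodic, the normalized Lebesgue measure $\lambda$ being the unique invariant Borel probability measure on $\Bbb T^n$. Let $E\colon{\mathscr F}_{\theta,{\boldsymbol f}}\to\mathcal C(\Bbb T^n)$ denote the canonical faithful conditional expectation onto the coefficient algebra. The key point is the usual dictionary between invariant probability measures and tracial states. On one side, for any $\varphi_{\theta,{\boldsymbol f}}$-invariant probability measure $\mu$ the functional $\mu\circ E$ is a tracial state; this is a short computation on the generators $fu^{k}$ using only invariance of $\mu$. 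On the other side, any tracial state $\tau$ restricts on $\mathcal C(\Bbb T^n)$ to a $\varphi_{\theta,{\boldsymbol f}}$-invariant probability measure $\mu_{\tau}$, and freeness of the action is precisely what lets one show $\tau$ vanishes on $\mathcal C(\Bbb T^n)u^{k}$ for all $k\neq 0$ (cover $\Bbb T^n$ by finitely many open sets moved off themselves by $\varphi_{\theta,{\boldsymbol f}}^{k}$, use a subordinate partition of unity and the trace identity), so that $\tau=\mu_{\tau}\circ E$. Thus $\tau\mapsto\mu_{\tau}$ is a bijection from tracial states onto invariant measures, and uniqueness of $\lambda$ gives the unique trace. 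Alternatively one could quote the refined classification framework recalled in the Introduction, but that is not needed here.

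I do not expect a genuine obstacle: essentially all of the analytic content is already packaged in Theorem~\ref{Furstenberg}, quoted from \cite{hF61}. The only points that need a moment of care are the observation that $X=\Bbb T^n$ is infinite — this is what upgrades minimality to freeness and hence to simplicity, and also makes the conditional expectation argument work — and invoking the correct, free-action version of the trace/invariant-measure correspondence. Both are routine, so the corollary is little more than an assembly of Theorem~\ref{Furstenberg} with well-known facts about transformation group $C^{*}$-algebras.
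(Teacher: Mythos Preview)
Your proposal is correct and follows essentially the same route as the paper: both parts are deduced from Theorem~\ref{Furstenberg} together with the standard simplicity/trace theory for transformation group $C^*$-algebras. The paper's proof is terser---it simply cites \cite{eE67,scP78} for ``minimality $\Rightarrow$ simplicity'' and \cite[Corollary~3.3.10]{jT87} for ``free $+$ uniquely ergodic $\Rightarrow$ unique trace''---whereas you sketch the conditional-expectation argument behind the latter. One minor difference: the paper obtains freeness directly from irrationality of $\theta$ (the first coordinate is rotated by the irrational angle $\theta$, so no orbit can close up), while you deduce it from minimality on an infinite space; both are valid and equally short.
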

\begin{proof}
For the first part, the minimality of the action as stated in
the preceding theorem implies the simplicity of ${\mathscr F}_{\theta,{\boldsymbol f}}$
\cite{eE67,scP78}. For the second part, one can
easily check that since $\theta$ is irrational, the action of
$\mathbb{Z}$ on $\Bbb{T}^n$ generated by $\varphi_{\theta,{\boldsymbol f}}$ is
free. So there are no periodic points in $\Bbb{T}^n$. This and
the unique ergodicity of $\varphi_{\theta,{\boldsymbol f}}$ yield the result
\cite[Corollary 3.3.10, p. 91]{jT87}.
\end{proof}
\begin{rema}\label{rema:1} Using the preceding corollary and 
much like the
proof of Theorem 2.1 in \cite{kR06}, one can prove that for irrational $\theta$, 
${\mathscr F}_{\theta,{\boldsymbol f}}$ is
in fact the unique $C^*$-algebra generated by unitaries $U,V_1,\ldots,V_n$ 
satisfying
the commutator relations
\begin{equation*}
[U,V_1]=e^{2 \pi i \theta},[U,V_2]=f_1(V_1),\ldots,
[U,V_n]=f_{n-1}(V_1,\ldots,V_{n-1}) \tag*{${{\bahram}_{\boldsymbol{f}}}$},
\end{equation*}
where $[a,b]:=aba^{-1}b^{-1}$ and all other pairs of operators
from $U,V_1,\ldots,V_n$ commute. 
\end{rema}

\

\begin{rema}\label{rema:2} In \cite[Proposition 2.17]{rJ86}, R. 
Ji claims to have proved\\

\noindent \textnormal{($\ast$)} \emph{If $\varphi_{\theta,{\boldsymbol f}}$ is an ascending Furstenberg 
transformation
on $\Bbb{T}^n$ with the ascending sequence $\{k_1,k_2,\ldots,k_{n-1}\}$, 
then the
torsion subgroup of $K_*({\mathscr F}_{\theta,{\boldsymbol f}})$ is isomorphic to
$\mathbb{Z}_{k_1}\oplus\mathbb{Z}_{k_2}^{(m_2)}\oplus\ldots\oplus
\mathbb{Z}_{k_{n-1}}^{(m_{n-1})}$, where the group 
$\mathbb{Z}_{k_i}^{(m_i)}$ is the direct
product of $m_i$ copies of the cyclic group $\mathbb{Z}_{k_i}=\mathbb{Z}/k_i 
\mathbb{Z}$.}\\

\noindent From this claim one would immediately deduce that the $K$-groups 
of the \text{$C^*$-algebra} $\mathscr{A}_{n,\theta}:=\mathcal{C}(\Bbb{T}^n)\rtimes_\sigma \mathbb{Z}$
generated by an Anzai transformation $\sigma$ on $\Bbb{T}^n$ should be 
torsion-free. However, we will show that this is not true in general. This type of example first appears for $n = 6$, which seems already beyond hand calculation. (We admit that hand calculation would be the most convincing 
method to use; however, it is not practicable.) As the first 
counterexample we obtained by computer, we will see in Example \ref{counter} that $K_1(\mathscr{A}_{6,\theta})\cong\mathbb{Z}^{13}\oplus\mathbb{Z}_2$ (also, see Example \ref{compu}).
In fact, the error in the proof of ($\ast$) is in \cite[p. 29, l.2]{rJ86};
there it is ``clearly'' assumed that using a matrix ${\sf  S}$ in 
$\mathrm{GL}(2^n,\Bbb{Z})$, one can delete all entries
denoted by $\star$'s in $\mathsf{K_*-I}$, where $\mathsf{K_*}$ is the 
$2^n \times 2^n$ 
integer matrix corresponding to ${\mathscr F}_{\theta,{\boldsymbol f}}$ that acts 
on $K_*(\mathcal{C}(\Bbb{T}^n))=\Lambda^*\Bbb{Z}^n$ with respect to
a certain ordered basis. This error arose originally from the general 
form of the matrix $\mathsf{K}_*$ in \cite[p. 27]{rJ86}, which is not 
correct.
R. Ji went on to use the torsion subgroup in ($\ast$) as an invariant
to classify the $C^*$-algebras generated by ascending transformations and 
matrix algebras over them \cite[Theorem 3.6]{rJ86}. We do not know whether those
classifications holds.\\

\noindent \textbf{Question}. Do there exist two different ascending
Furstenberg transformations with the same parameter $\theta$ and 
with isomorphic transformation group $C^*$-algebras?
\end{rema}

\noindent It is worth mentioning that explicit calculations of $K$-groups in terms of the parameters involved 
are possible in low dimensions (of the tori), and answer the question raised above negatively.
However, such calculations in terms of the given integer parameters (exponents) of the ascending
Furstenberg transformation become quickly cumbersome and impossible in 
higher dimensions. We have used the computer codes in Appendix \ref{codes}
for several numerical values of the parameters in higher dimensions, and we have not
found any examples leading to the negative answer to this question yet. The torsion subgroups
of the $K$-groups are usually larger than what R. Ji claimed in \cite[Proposition 2.17]{rJ86};
it seems likely that the torsion subgroups depend on the parameters involved in such a way that Ji's 
classification result is still true. We are currently investigating this problem.\\

\noindent This paper is organized as follows. In Section 2, we review the general approach 
of exterior algebras for finding $K$-groups of transformation group $C^*$-algebras of 
homeomorphisms of the tori. In Section 3, we apply this method to the important case
of Anzai transformations and give the $K$-groups of their transformation group $C^*$-algebras
based on the tori of dimension up to 12 in Table 1 using the computer programs given in Appendix \ref{codes}.
In Section 4, we establish a Poincar\'{e} type of duality for the cokernels of integer matrices that leads
to some interesting facts about the $K$-groups when the dimension of the underlying torus is odd. In Section 5, we focus on the rank $a_n$ of the $K$-groups of Anzai transformations group $C^*$-algebras based on the 
$n$-torus, and we show that $a_n$ is, in fact, the rank of the $K$-groups of a large class of 
transformation group $C^*$-algebras including those associated with Furstenberg transformations
on the $n$-torus. In Section 6, we first uncover an interesting connection between studying $a_n$
and the irreducible representations of the Lie algebra $\mathfrak{sl}(2,\Bbb{C})$. This leads to a formula
for $a_n$ in terms of certain partitions of integers. Then we use this formula to 
show several interesting combinatorial properties of the sequence $\{a_n\}$. In Section 7,
we study the order structure of the $K_0$-group of a class of simple Furstenberg transformation
group $C^*$-algebras to make their Elliott invariants more accessible. The appendices
at the end are provided mainly for self-containment of the paper, but we 
sometimes refer to them for the proof of some lemmas or propositions that are somewhat far from the 
main concepts and goals of this paper by nature. Appendix \ref{cocompact}
contains some applications of the results of this paper to our earlier work \cite{kR06}.

\section{$K$-groups of $\mathcal{C}(\Bbb{T}^n)\rtimes_\alpha\Bbb{Z}$}\label{sec:general}

 \noindent  In this section, we describe a general method to compute the $K$-groups of 
$\mathcal{C}(\Bbb{T}^n)\rtimes_\alpha\Bbb{Z}$,
  where $\alpha$ is an arbitrary homeomorphism of $\Bbb{T}^n$. (By abuse of notation, the automorphism
  $\alpha$ of $\mathcal{C}(\Bbb{T}^n)$ is defined by $\alpha(f)=f\circ\alpha^{-1}$ for $f\in\mathcal{C}(\Bbb{T}^n)$.)
  To do this, we will pay special attention to the algebraic structure of 
$K^{*}(\Bbb{T}^n)$ and how the induced automorphisms on it can be realized. Note that it
is sufficient to consider the special case of ``linear" homeomorphisms 
since, as stated before Definition \ref{furst-anzai}, every continuous function $f:\Bbb{T}^n\to\Bbb{T}$ is homotopic to a unique ``linear" function $(\zeta_1,\ldots,\zeta_n)\mapsto\zeta_1^{b_1}\ldots \zeta_n^{b_n}$ for some integer exponents $b_1,\ldots,b_n$. Moreover, the $K$-groups of $\mathcal{C}(\Bbb{T}^n)\rtimes_\alpha\Bbb{Z}$ depend (up to isomorphism) only on the homotopy class of $\alpha$ \cite[Corollary 10.5.2]{bB98}. \\

\noindent  It is well known that $K^{*}(\Bbb{T}^n)$
  is a $\mathbb{Z}_2$-graded ring, and by the K\"{u}nneth
  formula (see \cite[Corollary 2.7.15]{mA67} or \cite[Theorem 4.1]{cS82}), 
  it is an exterior algebra (over $\mathbb{Z}$) on $n$ generators, where the
  elements of even degree are in $K^{0}(\Bbb{T}^n)$ and those of odd degree 
are in $K^{1}(\Bbb{T}^n)$.
  The generators of this exterior algebra correspond to the generators of 
the dual group $\mathbb{Z}^n$ of
  $\Bbb{T}^n$ \cite[p. 185]{jlT75}. Indeed, in this case the Chern character 
$$\mathrm{ch}:K^{*}(\Bbb{T}^n)\longrightarrow
  \Check{H}^{*}(\Bbb{T}^n,\mathbb{Q})$$ is integral and gives the Chern 
isomorphisms
  $$\mathrm{ch}_{0}:K^{0}(\Bbb{T}^n)\longrightarrow 
\Check{H}^{\text{even}}(\Bbb{T}^n,\mathbb{Z}),$$
  $$\mathrm{ch}_{1}:K^{1}(\Bbb{T}^n)\longrightarrow 
\Check{H}^{\text{odd}}(\Bbb{T}^n,\mathbb{Z}),$$
  where 
$\Check{H}^{*}(\Bbb{T}^n,\mathbb{Z})\cong
\Lambda^{*}_{\mathbb{Z}}(e_1,\ldots,e_n)$
is the (\v{C}ech) cohomology ring of $\Bbb{T}^n$ under the cup product, and
$\Check{H}^{k}(\Bbb{T}^n,\mathbb{Z})\cong
\Lambda^{k}_{\mathbb{Z}}(e_1,\ldots,e_n)$. 
On the other hand, $K^{*}(\Bbb{T}^n)\cong
K_{*}(\mathcal{C}(\Bbb{T}^n))$. So by introducing $e_i:=[z_i]_1$,
i.e. the class in $K_{1}(\mathcal{C}(\Bbb{T}^n))$
of the coordinate function $z_i:\Bbb{T}^n \rightarrow\Bbb{T}$ given by 
$$z_i(\zeta_1,\ldots,\zeta_n)=\zeta_i$$ as a unitary element of of
$\mathcal{C}(\Bbb{T}^n)$ for $i=1,\ldots,n$, we have the 
isomorphisms
$K_{*}(\mathcal{C}(\Bbb{T}^n))\cong
\Lambda^{*}_{\mathbb{Z}}(e_1,\ldots,e_n)\cong\Lambda^{*}\mathbb{Z}^n$, which respect
 the canonical embedding of $\mathbb{Z}^n$. 
Moreover, these isomorphisms are
unique since only the identity automorphism of the ring 
$\Lambda^{*}\mathbb{Z}^n$ fixes each
element of $\mathbb{Z}^n$.\\

\noindent Now, we use the Pimsner-Voiculescu six term exact sequence \cite{mP80} as the 
main tool for computing the $K$-groups
of $\mathcal{C}(\Bbb{T}^n)\rtimes_\alpha\Bbb{Z}$. Let 
$\alpha_*(=K_*(\alpha))$ be the
  ring automorphism of $K_{*}(\mathcal{C}(\Bbb{T}^n))$ induced by $\alpha$ 
and let $\alpha_i$ be the restriction
  of $\alpha_*$ on $K_{i}(\mathcal{C}(\Bbb{T}^n))$ for $i=0,1$, and
  set $A:=\mathcal{C}(\Bbb{T}^n)\rtimes_\alpha\Bbb{Z}$.
  Then we have the following exact sequence.  
\begin{equation}\label{P-V}
\begin{CD}
      K_{0}(\mathcal{C}(\Bbb{T}^n)) 
@>\alpha_{0}-\mathrm{id}>>K_{0}(\mathcal{C}(\Bbb{T}^n)) @>\jmath_{0}>>K_0(A)\\
      @A {\rm exp} AA  @. @VV \partial V \\
      K_{1}(A) @<\jmath_{1}<< K_{1}(\mathcal{C}(\Bbb{T}^n)) 
@<\alpha_{1}-\mathrm{id}<<K_1(\mathcal{C}(\Bbb{T}^n))
      \end{CD}
  \end{equation}  
  ~\\ 
    \noindent Here, $\jmath:\mathcal{C}(\Bbb{T}^n)\rightarrow A$ is the canonical 
embedding of
     $\mathcal{C}(\Bbb{T}^n)$ in $A$, $\jmath_0:=K_0(\jmath)$ and 
$\jmath_1:=K_1(\jmath)$. Also, from now on
     $\mathrm{id}$ denotes the identity function on each underlying set.
      As a result, we have the following short exact sequences
       \begin{equation}\label{K_0}
       0 \longrightarrow \mathrm{coker}
      (\alpha_0-\mathrm{id})\longrightarrow 
K_0(\mathcal{C}(\Bbb{T}^n)\rtimes_\alpha\Bbb{Z})\longrightarrow
      \ker(\alpha_{1}-\mathrm{id})\longrightarrow 0,
      \end{equation}
      \begin{equation}\label{K_1}
       0 \longrightarrow \mathrm{coker}
      (\alpha_1-\mathrm{id})\longrightarrow 
K_1(\mathcal{C}(\Bbb{T}^n)\rtimes_\alpha\Bbb{Z})\longrightarrow
      \ker(\alpha_{0}-\mathrm{id})\longrightarrow 0.
      \end{equation}~\\
      Since all the groups involved are abelian and finitely generated, and $\ker(\alpha_{i}-\mathrm{id})$ is 
torsion-free $(i=0,1)$,
      these short exact sequences split (since projective $\Bbb Z$-modules are
      precisely free abelian groups), and we have
      \begin{equation}\label{exact:K_0}
      \boxed{K_{0}(\mathcal{C}(\Bbb{T}^n)\rtimes_\alpha\Bbb{Z})\cong
      \mathrm{coker}(\alpha_{0}-\mathrm{id})\oplus \ker(\alpha_{1}-\mathrm{id}),}
      \end{equation}
      \begin{equation}\label{exact:K_1}
      \boxed{K_{1}(\mathcal{C}(\Bbb{T}^n)\rtimes_\alpha\Bbb{Z})\cong
      \mathrm{coker}(\alpha_{1}-\mathrm{id})\oplus \ker(\alpha_{0}-\mathrm{id}).}
      \end{equation}~\\
      So it suffices to determine the kernel and cokernel of 
$(\alpha_0-\mathrm{id})$ and $(\alpha_1-\mathrm{id})$ acting as
      endomorphisms on the finitely generated abelian groups 
      $\Lambda^{\text{even}}_{\mathbb{Z}}(e_1,\ldots,e_n)
      \cong\Bbb{Z}^{2^{n-1}}$
      and $\Lambda^{\text 
{odd}}_{\mathbb{Z}}(e_1,\ldots,e_n)\cong\Bbb{Z}^{2^{n-1}}$, respectively.
      Note that from the isomorphisms (\ref{exact:K_0}) and (\ref{exact:K_1}), the $K$-groups of 
$\mathcal{C}(\Bbb{T}^n)\rtimes_\alpha\Bbb{Z}$ are finitely generated abelian 
groups.
      Now, since $\alpha_*$ is a ring homomorphism, it suffices
      to know the action of $\alpha_*$ on $e_1,\ldots,e_n$. In fact, for a 
general
      basis element $e_{i_1}\wedge e_{i_2}\wedge\ldots\wedge e_{i_r}$ of 
$K_{*}(\mathcal{C}(\Bbb{T}^n))\cong
      \Lambda^{*}_{\mathbb{Z}}(e_1,\ldots,e_n )$ we have
      $$\alpha_*(e_{i_1}\wedge e_{i_2}\wedge\ldots\wedge
      e_{i_r})=\alpha_*(e_{i_1})\wedge\alpha_* 
(e_{i_2})\wedge\ldots\wedge\alpha_*(e_{i_r}).$$
      Thus if we consider $\{e_1,\ldots,e_n \}$ as the canonical basis of 
$\Bbb{Z}^n$ and
      take $\hat{\alpha}=\left.\alpha_*\right|_{\Bbb{Z}^n}$, we have 
$\alpha_*=\wedge^*\hat{\alpha}
      =\oplus_{r=1}^n \wedge^r \hat{\alpha}$,
      $\alpha_0=\wedge^{\text{even}}\hat{\alpha}=\oplus_{r \ge 0}
\wedge^{2r}\hat{\alpha}$ and
$\alpha_1=\wedge^{\text{odd}}\hat{\alpha}=\oplus_{r \ge 0}
\wedge^{2r+1}\hat{\alpha}$, where $\wedge^{i}\hat{\alpha}$ is the $i$-th 
exterior
power of $\hat{\alpha}$, which acts on 
$\Lambda^{i}{\mathbb{Z}^n}$ for $i=0,1,\ldots,n$.
       Now, let
      $\alpha^{-1}=(f_1,\ldots,f_n)$ and $a_{ji}:=A_j[f_i]$, or in other words, 
assume that
      $f_i$ is homotopic to $z_1^{a_{1i}}\ldots z_n^{a_{ni}}:(\zeta_1,\ldots,\zeta_n)\mapsto \zeta_1^{a_{1i}} 
\ldots \zeta_n^{a_{ni}}$ for $i=1,\ldots,n$. 
      So we can write
      \begin{align*}
      \alpha_*(e_i)=\alpha_*[z_i]_1=[\alpha(z_i)]_1&=[z_i\circ\alpha^{-1}]_1
      =[f_i]_1 =
      [z_1^{a_{1i}}\ldots z_n^{a_{ni}}]_1
      =\sum_{j=1}^n a_{ji}[z_j]_1=\sum_{j=1}^n a_{ji}e_j.
      \end{align*}
    Therefore $\hat{\alpha}$
      acts on $\Bbb{Z}^n$ via the corresponding integer matrix
      $\mathsf{A}:=[a_{ij}]_{n \times n}\in\mathrm{GL}(n,\Bbb{Z})$,
      $\alpha_*$ acts on $\Lambda^*\Bbb{Z}^n$ via $\wedge^*\mathsf{A}$, 
and we have the following isomorphisms
\begin{align*}
K_0(\mathcal{C}(\Bbb{T}^n)\rtimes_\alpha\Bbb{Z})&\cong 
\mathrm{coker}(\alpha_0-\mathrm{id})\oplus \ker(\alpha_1-\mathrm{id})
                            =\mathrm{coker}(\oplus_{r \ge 
0}\wedge^{2r}\hat{\alpha}-\mathrm{id})
\oplus \ker(\oplus_{r \ge 0}\wedge^{2r+1}\hat{\alpha}-\mathrm{id}),
\end{align*}
so we can write
$$\boxed{K_0(\mathcal{C}(\Bbb{T}^n)\rtimes_\alpha\Bbb{Z})
\cong \bigoplus_{r \ge 0}[\mathrm{coker}(\wedge^{2r}\hat{\alpha}-\mathrm{id})\oplus
\ker(\wedge^{2r+1}\hat{\alpha}-\mathrm{id})],}$$
and similarly
\begin{align*}
\boxed{K_1(\mathcal{C}(\Bbb{T}^n)\rtimes_\alpha\Bbb{Z})
\cong \bigoplus_{r \ge 0}[\mathrm{coker}(\wedge^{2r+1}\hat{\alpha}-\mathrm{id})
\oplus\ker(\wedge^{2r}\hat{\alpha}-\mathrm{id})].}
\end{align*}
~\\

\noindent We summarize the arguments discussed above in the following theorem.
\begin{theo}\label{theo:1}
Let $\alpha$ be a homeomorphism of $\Bbb{T}^n$ and 
$\hat{\alpha}\in\mathrm{Aut}(\Bbb{Z}^n)$
be the restriction of $\alpha_*$ to $\Bbb{Z}^n $ (as above). Then
$\alpha_*=\wedge^*\hat{\alpha}=\oplus_{r=1}^n \wedge^r \hat{\alpha}$
on $K^*(\Bbb{T}^n)=\Lambda^*\Bbb{Z}^n$ and\\
\begin{align*}
K_0(\mathcal{C}(\Bbb{T}^n)\rtimes_\alpha\Bbb{Z})
&\cong \bigoplus_{r \ge 0}[\mathrm{coker}(\wedge^{2r}\hat{\alpha}-\mathrm{id})\oplus
\ker(\wedge^{2r+1}\hat{\alpha}-\mathrm{id})],\\
K_1(\mathcal{C}(\Bbb{T}^n)\rtimes_\alpha\Bbb{Z})&\cong \bigoplus_{r \ge 
0}[\mathrm{coker}(\wedge^{2r+1}\hat{\alpha}-\mathrm{id})\oplus
\ker(\wedge^{2r}\hat{\alpha}-\mathrm{id})].
\end{align*}
\end{theo}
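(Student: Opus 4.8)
The plan is to assemble Theorem \ref{theo:1} entirely from pieces that the exposition above has already put in place, so the ``proof'' is really a matter of organizing the bookkeeping and checking that nothing was swept under the rug. First I would recall the three ingredients: (i) the Pimsner--Voiculescu six term exact sequence \eqref{P-V} for $A=\mathcal{C}(\Bbb{T}^n)\rtimes_\alpha\Bbb{Z}$, which after breaking at the maps $\alpha_i-\mathrm{id}$ produces the short exact sequences \eqref{K_0} and \eqref{K_1}; (ii) the observation that the outer terms $\ker(\alpha_i-\mathrm{id})$ are subgroups of the free abelian groups $K_i(\mathcal{C}(\Bbb{T}^n))$, hence themselves free, so both sequences split, giving \eqref{exact:K_0} and \eqref{exact:K_1}; and (iii) the identification of $K_*(\mathcal{C}(\Bbb{T}^n))$ with the exterior algebra $\Lambda^*\Bbb{Z}^n$ on the classes $e_i=[z_i]_1$, under which $K_0$ is the even part and $K_1$ the odd part, together with the fact that $\alpha_*$ is a \emph{ring} automorphism.

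The central point to nail down is step (iii)'s consequence that $\alpha_*=\wedge^*\hat\alpha$. Because $\alpha_*$ is a ring homomorphism, its value on a monomial $e_{i_1}\wedge\cdots\wedge e_{i_r}$ is $\alpha_*(e_{i_1})\wedge\cdots\wedge\alpha_*(e_{i_r})$, so $\alpha_*$ is completely determined by its restriction $\hat\alpha$ to the degree-one part $\Bbb{Z}^n=\mathrm{span}\{e_1,\dots,e_n\}$; this restriction is an automorphism of $\Bbb{Z}^n$ because $\alpha_*$ is an automorphism preserving the grading. By the universal property of the exterior algebra, the unique ring endomorphism extending $\hat\alpha$ is $\oplus_{r=0}^n\wedge^r\hat\alpha$, and each $\wedge^r\hat\alpha$ preserves $\Lambda^r\Bbb{Z}^n$. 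Since $K_0$ is the sum of the even exterior powers and $K_1$ the sum of the odd ones, we get $\alpha_0=\oplus_{r\ge0}\wedge^{2r}\hat\alpha$ and $\alpha_1=\oplus_{r\ge0}\wedge^{2r+1}\hat\alpha$. I should also remark, as the text does, why one may restrict to ``linear'' homeomorphisms: the $K$-groups of the crossed product depend only on the homotopy class of $\alpha$ \cite[Corollary 10.5.2]{bB98}, and every self-map of $\Bbb{T}^n$ is homotopic to a linear one; the computation $\alpha_*(e_i)=\sum_j a_{ji}e_j$ with $a_{ji}=A_j[f_i]$ then identifies $\hat\alpha$ with the integer matrix $\mathsf{A}\in\mathrm{GL}(n,\Bbb{Z})$, though for the statement of the theorem only the abstract automorphism $\hat\alpha$ is needed.

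Finally I would substitute. Kernel and cokernel commute with finite direct sums of endomorphisms, so
\[
\mathrm{coker}(\alpha_0-\mathrm{id})=\bigoplus_{r\ge0}\mathrm{coker}(\wedge^{2r}\hat\alpha-\mathrm{id}),\qquad
\ker(\alpha_1-\mathrm{id})=\bigoplus_{r\ge0}\ker(\wedge^{2r+1}\hat\alpha-\mathrm{id}),
\]
and likewise with the roles of $0$ and $1$ interchanged. Plugging these into \eqref{exact:K_0} and \eqref{exact:K_1} yields exactly the two displayed formulas for $K_0$ and $K_1$ in the statement. The only genuine subtlety—the ``hard part,'' such as it is—is making sure the splitting in step (ii) is legitimate: one needs that $K_*(\mathcal{C}(\Bbb{T}^n))$ is finitely generated free (true, it is $\Lambda^*\Bbb{Z}^n\cong\Bbb{Z}^{2^n}$) so that subgroups are free, hence projective over $\Bbb{Z}$, which forces the sequences to split; everything else is formal manipulation of exterior powers and exact sequences. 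I would close by noting that the finiteness of the ambient groups also shows all the $K$-groups in question are finitely generated, a fact used repeatedly later.
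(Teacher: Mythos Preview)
Your proposal is correct and follows essentially the same approach as the paper: the theorem there is stated as a summary of the preceding discussion, and your three ingredients (the Pimsner--Voiculescu sequence broken into split short exact sequences, the identification of $K_*(\mathcal{C}(\Bbb{T}^n))$ with $\Lambda^*\Bbb{Z}^n$, and the fact that $\alpha_*$ is a ring automorphism hence equals $\wedge^*\hat\alpha$) are exactly the pieces the paper assembles. Your treatment of the splitting and of the decomposition of kernels and cokernels over the direct sum is, if anything, slightly more explicit than the paper's, but there is no substantive difference.
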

~\\

\noindent Therefore in order to compute the $K$-groups of
$\mathcal{C}(\Bbb{T}^n)\rtimes_\alpha\Bbb{Z}$, we must find the kernel and 
cokernel of $\wedge^{r}\hat{\alpha}-\mathrm{id}$ as an endomorphism of $\Lambda^{r}{\mathbb{Z}^n}$
for $r=0,1,\ldots,n$. Note that the matrix of $\wedge^{r}\hat{\alpha}-\mathrm{id}$ with 
respect to the canonical basis
$\{e_{i_1}\wedge\ldots\wedge e_{i_r}|1 \le i_1 <  \ldots < i_r \le n \}$ 
with lexicographic
order is $\mathsf{A}_{n,r}:=\wedge^r \mathsf{A}- \mathsf{I}_{\binom{n}{r}}$, 
which is an integer
matrix of order $\binom{n}{r}$ ($\mathsf{I}_k$ is the identity matrix of 
order $k$ - we often
omit $k$ whenever it is clear). So by computing
the kernel and cokernel of $\mathsf{A}_{n,r}$ for $r=0,1,\ldots,n$ with 
appropriate tools (such as the Smith normal form), one can 
determine the $K$-groups of 
$\mathcal{C}(\Bbb{T}^n)\rtimes_\alpha\Bbb{Z}$. 
The author has written some Maple codes to handle such computations
(see Appendix \ref{codes}) .

\begin{coro}\label{coro:3}
The $K$-groups of $\mathcal{C}(\Bbb{T}^n)\rtimes_\alpha\Bbb{Z}$ are finitely 
generated abelian groups with the same rank. Moreover, this common rank equals
$$\mathrm{rank}\ker(\wedge^* \hat{\alpha}-\mathrm{id})=
\sum_{r=0}^n \mathrm{rank}\ker(\wedge^r \hat{\alpha}-\mathrm{id}).$$
\end{coro}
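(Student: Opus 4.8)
The plan is to read the ranks directly off the explicit direct-sum decompositions in Theorem \ref{theo:1}, together with the elementary fact that an endomorphism of a finitely generated abelian group has kernel and cokernel of equal rank.

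First I would record the key lemma: if $G$ is a finitely generated abelian group and $\phi\colon G\to G$ is a group endomorphism, then $\mathrm{rank}(\ker\phi)=\mathrm{rank}(\mathrm{coker}\,\phi)$. Indeed, tensoring the exact sequence $0\to\ker\phi\to G\xrightarrow{\phi}G\to\mathrm{coker}\,\phi\to 0$ with $\Bbb{Q}$ keeps it exact (flatness of $\Bbb{Q}$ over $\Bbb{Z}$), and rank-nullity applied to the induced endomorphism $\phi\otimes\mathrm{id}_{\Bbb{Q}}$ of the finite-dimensional $\Bbb{Q}$-vector space $G\otimes\Bbb{Q}$ gives $\dim_{\Bbb{Q}}(\ker\phi\otimes\Bbb{Q})=\dim_{\Bbb{Q}}(\mathrm{coker}\,\phi\otimes\Bbb{Q})$, which is the assertion.

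Next I would apply this with $G=\Lambda^{r}\Bbb{Z}^n$ and $\phi=\wedge^{r}\hat{\alpha}-\mathrm{id}$ for $r=0,1,\dots,n$, obtaining $\mathrm{rank}\,\mathrm{coker}(\wedge^{r}\hat{\alpha}-\mathrm{id})=\mathrm{rank}\ker(\wedge^{r}\hat{\alpha}-\mathrm{id})=:\rho_{r}$. Plugging this into the isomorphisms of Theorem \ref{theo:1} and using additivity of rank over finite direct sums yields $\mathrm{rank}\,K_{0}(\mathcal{C}(\Bbb{T}^n)\rtimes_\alpha\Bbb{Z})=\sum_{r\ge 0}(\rho_{2r}+\rho_{2r+1})=\sum_{r=0}^{n}\rho_{r}$, and identically $\mathrm{rank}\,K_{1}(\mathcal{C}(\Bbb{T}^n)\rtimes_\alpha\Bbb{Z})=\sum_{r\ge 0}(\rho_{2r+1}+\rho_{2r})=\sum_{r=0}^{n}\rho_{r}$; hence the two $K$-groups share the same rank. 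Finally, since $\wedge^{*}\hat{\alpha}=\bigoplus_{r=0}^{n}\wedge^{r}\hat{\alpha}$ is block-diagonal for the grading $\Lambda^{*}\Bbb{Z}^n=\bigoplus_{r}\Lambda^{r}\Bbb{Z}^n$, its fixed-point defect splits as $\ker(\wedge^{*}\hat{\alpha}-\mathrm{id})=\bigoplus_{r=0}^{n}\ker(\wedge^{r}\hat{\alpha}-\mathrm{id})$, so $\mathrm{rank}\ker(\wedge^{*}\hat{\alpha}-\mathrm{id})=\sum_{r=0}^{n}\rho_{r}$, giving the displayed common value. Finite generation was already observed from the isomorphisms (\ref{exact:K_0}) and (\ref{exact:K_1}), each summand being a subgroup or quotient of a free abelian group of finite rank.

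There is essentially no obstacle here; the only points requiring a little care are the rank identity $\mathrm{rank}\ker=\mathrm{rank}\,\mathrm{coker}$, which holds for endomorphisms but fails for general homomorphisms between distinct groups, and the bookkeeping that matches up the even- and odd-index summands of the two decompositions so that both sums reindex to $\sum_{r=0}^{n}\rho_{r}$.
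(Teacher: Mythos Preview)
Your proof is correct and follows essentially the same approach as the paper: invoke the direct-sum decompositions of Theorem~\ref{theo:1} together with the identity $\mathrm{rank}\ker\varphi=\mathrm{rank}\,\mathrm{coker}\,\varphi$ for endomorphisms of finitely generated abelian groups. The only cosmetic difference is that the paper justifies this rank identity via the Smith normal form (Theorem~\ref{theo:smith}) rather than by tensoring with $\Bbb{Q}$, and leaves the even/odd bookkeeping implicit.
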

\begin{proof}
Use the previous proposition and note that for any $\varphi\in\mathrm{End}(\Bbb{Z}^n)$ one has 
$\mathrm{rank}\ker\varphi=\mathrm{rank}\hspace{2pt}\mathrm{coker}\hspace{2pt}\varphi$
by the Smith normal form theorem (see Theorem \ref{theo:smith}).
\end{proof}
\begin{coro}
If $\alpha, \beta$ are homeomorphisms of $\Bbb T^n$, whose corresponding 
integer matrices $\mathsf{A, B}\in\mathrm{GL}(n,\Bbb Z)$ are similar over $\Bbb Z$, then 
$$K_j(\mathcal{C}(\Bbb{T}^n)\rtimes_\alpha\Bbb{Z})\cong K_j(\mathcal{C}(\Bbb{T}^n)\rtimes_\beta\Bbb{Z}),
\hspace{.3in} (j=1,2).$$
\end{coro}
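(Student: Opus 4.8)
The plan is to reduce the claim entirely to Theorem~\ref{theo:1}, which expresses the $K$-groups of $\mathcal{C}(\Bbb{T}^n)\rtimes_\alpha\Bbb{Z}$ as direct sums of kernels and cokernels of the integer matrices $\wedge^r\mathsf{A}-\mathsf{I}_{\binom{n}{r}}$. So the real content is: if $\mathsf{A}$ and $\mathsf{B}$ are similar over $\Bbb{Z}$ (that is, $\mathsf{B}=\mathsf{S}\mathsf{A}\mathsf{S}^{-1}$ for some $\mathsf{S}\in\mathrm{GL}(n,\Bbb{Z})$), then for every $r$ the pairs of matrices $\wedge^r\mathsf{A}-\mathsf{I}$ and $\wedge^r\mathsf{B}-\mathsf{I}$ have isomorphic kernels and isomorphic cokernels as abelian groups. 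First I would note that the functor $\wedge^r$ applied to $\mathsf{S}$ gives a matrix $\wedge^r\mathsf{S}\in\mathrm{GL}(\binom{n}{r},\Bbb{Z})$ (it is invertible because $\wedge^r(\mathsf{S}^{-1})$ is its inverse, using functoriality of exterior powers), and that functoriality also gives $\wedge^r\mathsf{B}=(\wedge^r\mathsf{S})(\wedge^r\mathsf{A})(\wedge^r\mathsf{S})^{-1}$.

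Next I would observe that $\mathsf{I}_{\binom{n}{r}}=(\wedge^r\mathsf{S})\mathsf{I}_{\binom{n}{r}}(\wedge^r\mathsf{S})^{-1}$ trivially, so subtracting gives the conjugacy
\[
\wedge^r\mathsf{B}-\mathsf{I}_{\binom{n}{r}}=(\wedge^r\mathsf{S})\bigl(\wedge^r\mathsf{A}-\mathsf{I}_{\binom{n}{r}}\bigr)(\wedge^r\mathsf{S})^{-1}
\]
in $M_{\binom{n}{r}}(\Bbb{Z})$. In other words, $\wedge^r\mathsf{A}-\mathsf{I}$ and $\wedge^r\mathsf{B}-\mathsf{I}$ are conjugate by a matrix in $\mathrm{GL}(\binom{n}{r},\Bbb{Z})$; equivalently they have the same Smith normal form (being $\Bbb{Z}$-equivalent, indeed $\Bbb{Z}$-similar). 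From this it is immediate that their kernels are isomorphic (the isomorphism $\Bbb{Z}^{\binom{n}{r}}\to\Bbb{Z}^{\binom{n}{r}}$ given by $(\wedge^r\mathsf{S})^{-1}$ carries $\ker(\wedge^r\mathsf{B}-\mathsf{I})$ onto $\ker(\wedge^r\mathsf{A}-\mathsf{I})$), and their cokernels are isomorphic (the same $(\wedge^r\mathsf{S})^{-1}$, now viewed on the target side, descends to an isomorphism of quotient groups $\mathrm{coker}(\wedge^r\mathsf{B}-\mathsf{I})\to\mathrm{coker}(\wedge^r\mathsf{A}-\mathsf{I})$). Alternatively one can simply invoke that the Smith normal form is a complete invariant for both kernel and cokernel of an integer matrix, as already used in the proof of Corollary~\ref{coro:3}.

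Finally I would assemble the pieces: applying the previous paragraph for each $r=0,1,\ldots,n$ and taking the direct sum of the resulting isomorphisms, Theorem~\ref{theo:1} yields
\[
K_0(\mathcal{C}(\Bbb{T}^n)\rtimes_\alpha\Bbb{Z})\cong\bigoplus_{r\ge 0}\bigl[\mathrm{coker}(\wedge^{2r}\hat{\alpha}-\mathrm{id})\oplus\ker(\wedge^{2r+1}\hat{\alpha}-\mathrm{id})\bigr]\cong K_0(\mathcal{C}(\Bbb{T}^n)\rtimes_\beta\Bbb{Z}),
\]
and likewise for $K_1$ with the roles of even and odd reversed. This completes the argument.

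The main (and in fact only) obstacle is a bookkeeping one: being precise that exterior power is a functor on the category of $\Bbb{Z}$-modules so that $\wedge^r$ of a product is the product of the $\wedge^r$'s and $\wedge^r$ of an invertible matrix is invertible; once that is granted, everything else is a one-line conjugation computation. There is no analytic or $K$-theoretic difficulty here beyond what Theorem~\ref{theo:1} already provides. (One should note the statement's indices should read $j=0,1$ rather than $j=1,2$, matching the $K_0$/$K_1$ notation used throughout; I would state and prove it for $j=0,1$.)
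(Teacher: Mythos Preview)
Your proof is correct and follows essentially the same approach as the paper: both use the functoriality identity $\wedge^r(\hat\phi\circ\hat\psi)=(\wedge^r\hat\phi)\circ(\wedge^r\hat\psi)$ to deduce that $\wedge^r\hat\alpha-\mathrm{id}$ and $\wedge^r\hat\beta-\mathrm{id}$ are conjugate over $\Bbb{Z}$, and then invoke Theorem~\ref{theo:1}. Your version supplies a bit more detail on why conjugacy yields isomorphic kernels and cokernels, and your observation about the index typo ($j=0,1$ rather than $j=1,2$) is well taken.
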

\begin{proof}
The assumption obviously implies that the automorphisms $\hat\alpha$ and $\hat\beta$ are conjugate in 
$\mathrm{Aut}(\Bbb Z^n)$. This together with an easy application of the identity $\wedge^r(\hat\phi\circ\hat\psi)=(\wedge^r\hat\phi)\circ(\wedge^r\hat\psi)$ (see Appendix \ref{endomorphism}) imply that
$\wedge^r\hat\alpha$ and $\wedge^r\hat\beta$ (and therefore $\wedge^r\hat\alpha-\mathrm{id}$ and $\wedge^r\hat\beta-\mathrm{id}$)
are conjugate in $\mathrm{Aut}(\Lambda^r\Bbb Z^n)$ for $r=0,1,\ldots,n$. The result follows now from Theorem \ref{theo:1}.
\end{proof}

\section{Anzai transformation group $C^*$-algebras $\mathscr{A}_{n,\theta}$}\label{sec:Anzai}

\noindent The simplest case of a Furstenberg transformation on an $n$-torus is an Anzai transformation $\sigma$, which 
was defined in part (d) of Definition \ref{furst-anzai}. To study the $K$-groups of Anzai transformation group $C^*$-algebras $\mathscr{A}_{n,\theta}=
\mathcal{C}(\Bbb{T}^n)\rtimes_\sigma\Bbb{Z}$ using methods of the previous section, we will first need the ``linearized" form of the corresponding affine homeomorphism $\sigma^{-1}$, which is as follows
$$(\zeta_1,\zeta_2,\ldots,\zeta_n)\mapsto(\zeta_1,\zeta_1 \zeta_2,\ldots,\zeta_{n-1}\zeta_n).$$
      So $\hat{\sigma}(e_i)=e_{i-1}+e_i$ for $i=1,\ldots,n$ $(e_0:=0)$.
    The matrix  with respect to the canonical basis $\{e_1,\ldots,e_n \}$ of
    $\mathbb{Z}^n$ that corresponds to $\hat{\sigma}$ is the full Jordan block

      \[\mathsf{S}_n := \left(\begin{array}{ccccc}
1 & 1 & 0 & \cdots& 0\\
0 & 1 & 1 & &\vdots\\
0 &\ddots&\ddots&\ddots&0\\
\vdots&   & 0 & 1 & 1\\
0 &\cdots  & 0 & 0 & 1
\end{array}
\right)_{n \times n}
\]\\

\noindent The following examples illustrates the methods described in the previous section 
for computing the $K$-groups of Anzai transformation group $C^*$-algebras $\mathscr{A}_{n,\theta}$.

\begin{example}
We compute the $K$-groups of $\mathscr{A}_{3,\theta}$, which were computed in 
\cite{sW02} by another method (the $C^*$-algebra was denoted by $A^{5,5}_\theta$ in there).
In fact, the Chern character and noncommutative geometry were used in 
\cite{sW02} to compute the
kernel and cokernel of $\sigma_i -\mathrm{id}$
for $i=0,1$. However, we compute the kernel 
and cokernel of $\mathsf{S}_{3,r}:=\wedge^r \mathsf{S}_3-
\mathsf{I}_{\binom{3}{r}}$ for $r=0,1,2,3$, where
\[\mathsf{S}_3 = \left(\begin{array}{ccccc}
1 & 1 & 0 \\
0 & 1 & 1 \\
0 & 0 & 1\\
\end{array}
\right)
\]\\

\noindent $\boldsymbol{r=0})$ $\mathsf{S}_{3,0}=\wedge^0 \mathsf{S}_3-\mathsf{I}_1=[0]$. So
$\ker \mathsf{S}_{3,0}=\mathbb{Z}$ and
$\mathrm{coker}\hspace{2pt} \mathsf{S}_{3,0}=\mathbb{Z}/\langle 0 
\rangle\cong
\mathbb{Z}.$\\

 \noindent $\boldsymbol{r=1})$ 
 $$\mathsf{S}_{3,1}=\wedge^1 
\mathsf{S}_3-\mathsf{I}_3=\left(
\begin{array}{ccc}
1 & 1 & 0 \\
0 & 1 & 1 \\
0 & 0 & 1
\end{array}
\right)
-\left(\begin{array}{ccc}
1 & 0 & 0 \\
0 & 1 & 0 \\
0 & 0 & 1\\
\end{array}
\right)=\left(\begin{array}{ccc}
0 & 1 & 0 \\
0 & 0 & 1 \\
0 & 0 & 0\\
\end{array}
\right)$$
So
\begin{align*}
\ker \mathsf{S}_{3,1}&=\{(x,y,z)\in\mathbb{Z}^3|\,y=z=0\}=(\mathbb{Z},0,0)\cong\mathbb{Z},\\
\mathrm{coker}\hspace{2pt}\mathsf{S}_{3,1}&=
\mathbb{Z}^3/\mathsf{S}_{3,1}\mathbb{Z}^3
=\mathbb{Z}^3/\langle e_1,e_2 \rangle \cong \mathbb{Z}.
\end{align*}

\noindent $\boldsymbol{r=2})$ 
$$\mathsf{S}_{3,2}=\wedge^2\mathsf{S}_3-\mathsf{I}_3=
\left(\begin{array}{ccc}
1 & 1 & 1 \\
0 & 1 & 1 \\
0 & 0 & 1\\
\end{array}
\right)-\left(\begin{array}{ccc}
1 & 0 & 0 \\
0 & 1 & 0 \\
0 & 0 & 1\\
\end{array}
\right)=\left(\begin{array}{ccc}
0 & 1 & 1 \\
0 & 0 & 1 \\
0 & 0 & 0 \\
\end{array}
\right)$$
So
\begin{align*}
\ker \mathsf{S}_{3,2}&=\{(x,y,z)\in\mathbb{Z}^3|\,y+z=z=0 
\}=(\mathbb{Z},0,0)\cong\mathbb{Z},\\
\mathrm{coker}\hspace{2pt}\mathsf{S}_{3,2}&=
\mathbb{Z}^3/\mathsf{S}_{3,2}\mathbb{Z}^3
=\mathbb{Z}^3/\langle e_1,e_2 \rangle \cong \mathbb{Z}.
\end{align*}
\noindent $\boldsymbol{r=3})$ $\mathsf{S}_{3,3}=\wedge^3 
\mathsf{S}_3-\mathsf{I}_1=[0]$. So
$\ker \mathsf{S}_{3,3}
=\mathbb{Z}$ and $\mathrm{coker}\hspace{2pt} 
\mathsf{S}_{3,3}=\mathbb{Z}/\langle 0 \rangle\cong
\mathbb{Z}.$ \\

\noindent Now, using Theorem \ref{theo:1} we have
\begin{align*}
K_0(A^{5,5}_\theta)&=K_0({\mathscr{A}_{3,\theta}})\cong 
(\mathrm{coker}\hspace{2pt}
\mathsf{S}_{3,0}\oplus \mathrm{coker}\hspace{2pt}\mathsf{S}_{3,2})\oplus
(\ker \mathsf{S}_{3,1}\oplus \ker \mathsf{S}_{3,3})
\cong\mathbb{Z}\oplus\mathbb{Z}\oplus\mathbb{Z}\oplus\mathbb{Z}=\mathbb{Z}^4,\\
K_1(A^{5,5}_\theta)&=K_1({\mathscr{A}_{3,\theta}})\cong (\mathrm{coker}\hspace{2pt}\mathsf{S}_{3,1}\oplus 
\mathrm{coker}\hspace{2pt}\mathsf{S}_{3,3})\oplus(\ker \mathsf{S}_{3,0}\oplus\ker\mathsf{S}_{3,2})\cong\mathbb{Z}\oplus\mathbb{Z}
\oplus\mathbb{Z}\oplus\mathbb{Z}=\mathbb{Z}^4.
\end{align*}
\end{example}
~\\
%\vspace{.2in}

%\begin{sidewaystable}[]
%\begin{center}
%\vspace{5in}
%\caption{\Large\it The $K$-groups of $\mathscr{A}_{n,\theta}$ for $1 \leq n \leq 
%12$}\vspace{4pt}
%{\LARGE
%\begin{tabular}{|c|c|c|c|}
%%%%%%%%Table%%%%%%%%%
%\end{tabular}
%}
%\end{center}
%\end{sidewaystable}
%\newpage
\begin{nota}\label{not:a_n}
We let $a_n:=\textnormal{rank}\hspace{2pt}K_0(\mathscr{A}_{n,\theta})=
\textnormal{rank}\hspace{2pt}K_1(\mathscr{A}_{n,\theta})$ and $a_{n,r}:=
\textnormal{rank}\ker(\wedge^r \mathsf{S}_n-\mathsf{I})$ for 
$r=0,1,\ldots,n$. From Corollary \ref{coro:3}
we have $$a_n=\textnormal{rank}\ker(\wedge^* 
\mathsf{S}_n-\mathsf{I})=\sum_{r=0}^n a_{n,r}.$$
\end{nota}\label{nota:2}

\begin{example}\label{compu}
Using the methods described in Section \ref{sec:general}, we have obtained the $K$-groups of
$\mathscr{A}_{n,\theta}$ by computer for $1 \leq n \leq 12$. The cases $n=1, 2, 3$ have been calculated in the 
literature already:  $\mathscr{A}_{1,\theta}=A_\theta$ in \cite{mR81}; $\mathscr{A}_{2,\theta}=A^4_\theta$ in
\cite{pM93}; and $\mathscr{A}_{3,\theta}=A^{5,5}_\theta$ in \cite{sW02}. However, there are no explicit computations for 
the higher dimensional cases starting with $\mathscr{A}_{4,\theta}=A^{6,10}_\theta$ as in \cite{pM02} since 
hand calculations of kernels and cokernels of the maps become quickly impossible. Using the Maple codes given 
in Appendix \ref{codes}, we can find the 
kernel and cokernel of $$\mathsf{S}_{n,r}:=\wedge^r \mathsf{S}_n-\mathsf{I}_{\binom{n}{r}}$$ for $r=0,1,\ldots,n$ and $n=1,\ldots,12$ by means of the Smith normal form theorem (see Appendix \ref{smith}), and therefore we can compute the $K$-groups. The results are illustrated
in Table 1, where $\mathbb{Z}_{k}^{(m)}$ denotes the direct sum of $m$ 
copies of the cyclic group $\mathbb{Z}_{k}=\mathbb{Z}/k \mathbb{Z}$. \\
\begin{table}[ht]
\caption{\it The $K$-groups of $\mathscr{A}_{n,\theta}$ for $1 \leq n \leq 12$}
\begin{tabular}{|c|c|c|c|}
\hline
$n$  & $K_0(\mathscr{A}_{n,\theta})$ & $K_1(\mathscr{A}_{n,\theta})$ & $a_n$ 
\\ \hline\hline
1 & $\Bbb{Z}^2$ & $\Bbb{Z}^2$ & 2 \\  \hline
2 & $\Bbb{Z}^3$ & $\Bbb{Z}^3$ & 3  \\   \hline
3 & $\Bbb{Z}^4$ & $\Bbb{Z}^4$ & 4  \\   \hline
4 & $\Bbb{Z}^6$ & $\Bbb{Z}^6$ & 6  \\   \hline
5 & $\Bbb{Z}^8$ & $\Bbb{Z}^8$ & 8  \\   \hline
6 & $\Bbb{Z}^{13}$ & $\Bbb{Z}^{13} \oplus \Bbb{Z}_2$ & 13  \\   \hline
7 & $\Bbb{Z}^{20}$ & $\Bbb{Z}^{20}$ & 20  \\   \hline
8 & $\Bbb{Z}^{32} \oplus \Bbb{Z}_8^{(2)} $ & $\Bbb{Z}^{32} \oplus 
\Bbb{Z}_{18}^{(2)}$ & 32  \\   \hline
9 & $\Bbb{Z}^{52} \oplus \Bbb{Z}_3^{(2)} \oplus \Bbb{Z}_9^{(2)}$ & 
$\Bbb{Z}^{52} \oplus \Bbb{Z}_3^{(2)} \oplus \Bbb{Z}_9^{(2)}$ & 52  \\   \hline
10 & $\Bbb{Z}^{90} \oplus \Bbb{Z}_{55}^{(4)}$ & $\Bbb{Z}^{90} \oplus 
\Bbb{Z}_{11}^{(2)} \oplus \Bbb{Z}_{99} \oplus \Bbb{Z}_{198} \oplus 
\Bbb{Z}_{2574}$ & 90  \\   \hline
11 & $\Bbb{Z}^{152} \oplus \Bbb{Z}_{11}^{(12)} \oplus\Bbb{Z}_{143}^{(4)} 
\oplus \Bbb{Z}_{286}^{(2)}$ & $\Bbb{Z}^{152} \oplus \Bbb{Z}_{11}^{(12)} 
\oplus \Bbb{Z}_{143}^{(4)} \oplus \Bbb{Z}_{286}^{(2)}$ & 152  \\   \hline
12 & $\Bbb{Z}^{268} \oplus \Bbb{Z}_{13}^{(14)} \oplus\Bbb{Z}_{26}^{(4)} 
\oplus \Bbb{Z}_{1716}^{(4)}\oplus \Bbb{Z}_{3432}^{(2)}\oplus \Bbb{Z}_{58344}^{(2)}$
& $\Bbb{Z}^{268} \oplus \Bbb{Z}_{13}^{(4)} \oplus\Bbb{Z}_{26}^{(4)} 
\oplus \Bbb{Z}_{286}^{(6)}\oplus \Bbb{Z}_{4862}^{(2)}\oplus \Bbb{Z}_{68068}^{(2)}$  
& 268 
\\ \hline
\end{tabular}
\end{table}

\noindent Due to computational limitations, we do not have
any results yet for $n > 12$, except for the sequence of ranks $\{a_n\}$, which we will
study in detail in Sections \ref{sec:rank} and \ref{sec:comb}. 
We will show the importance 
of this sequence in Section \ref{sec:rank}. Briefly, $a_n$ is the common rank of the \text{$K$-groups} 
of a certain family of $C^*$-algebras including Furstenberg transformation
group $C^*$-algebras ${\mathscr F}_{\theta,{\boldsymbol f}}$ based on $\Bbb{T}^n$.
Also, we will prove that $\{a_n\}$ is a
strictly increasing sequence (see Proposition \ref{prop:2}). On the other hand,
it seems that the \text{$K$-groups} of $\mathscr{A}_{n,\theta}$ 
have torsion in general. The first example is $K_1(\mathscr{A}_{6,\theta})$; this is in fact because 
$\mathrm{coker}\hspace{2pt}\mathsf{S}_{6,3}=\mathrm{coker}(\wedge^3 \mathsf{S}_6-\mathsf{I}_{20})\cong
\Bbb{Z}^{3} \oplus \Bbb{Z}_2$ (see Example \ref{counter} and Remark \ref{rema:2}).
Also, it is seen that the $K_0$- and $K_1$-groups are isomorphic for odd values of $n$ in Table 1. 
In fact, this is true for more general cases (see Theorem \ref{odd-tori}).
\end{example}

\section{A Poincar\'{e} type of duality}

\noindent As stated in Theorem \ref{theo:1}, the $K$-groups of 
a transformation group $C^*$-algebras of the form $\mathcal{C}(\Bbb{T}^n)\rtimes_\alpha
\Bbb{Z}$ are completely determined by the corresponding homomorphism $\hat\alpha\in\mathrm
{Aut}(\Bbb{Z}^n)$ and its exterior powers. From a computational point of 
view, we only need the cokernels of the maps involved, since we know that for any 
endomorphism $\varrho$ on $\Bbb{Z}^m$, $\ker\varrho\cong\mathrm{coker}\varrho/{\mathrm{tor}}(\mathrm{coker}\varrho)$, where ${\mathrm{tor}}(G)$ denotes the torsion subgroup of the finitely generated abelian group $G$ (see Appendix \ref{smith}). When $\det\hat\alpha=1$, we don't even need to  compute
all the cokernels. This is due to the following proposition, which
 establishes a Poincar\'{e} type of duality between cokernels of certain integer matrices. We refer to Definition \ref{equiv}
 for the notion of equivalence of endomorphisms of $\Bbb Z^n$.
 
\begin{prop}[Poincar\'{e} duality]\label{poincare}
Let $\hat\alpha\in\mathrm{SL}(n,\Bbb{Z})$ (i.e. $\det\hat\alpha=1$). Then
$\wedge^r \hat\alpha-\mathrm{id}$ and $\wedge^{n-r} \hat\alpha-\mathrm{id}$ 
are equivalent as endomorphisms of $\Lambda^r 
\Bbb{Z}^n=\Lambda^{n-r}\Bbb{Z}^n=\Bbb{Z}^{\binom{n}{r}}$.
Equivalently, $\mathrm{coker}(\wedge^r \hat\alpha-\mathrm{id})$ is isomorphic to
$\mathrm{coker}(\wedge^{n-r} \hat\alpha-\mathrm{id})$ for $r=0,1,\ldots,n$.
\end{prop}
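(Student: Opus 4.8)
The plan is to exhibit an explicit duality isomorphism between $\Lambda^r\mathbb{Z}^n$ and $\Lambda^{n-r}\mathbb{Z}^n$ that intertwines $\wedge^r\hat\alpha$ with $\wedge^{n-r}\hat\alpha$, up to the automorphism hypothesis $\det\hat\alpha=1$, and then to transport the endomorphism $\wedge^r\hat\alpha-\mathrm{id}$ through it. The natural candidate is the pairing $\Lambda^r\mathbb{Z}^n\times\Lambda^{n-r}\mathbb{Z}^n\to\Lambda^n\mathbb{Z}^n\cong\mathbb{Z}$ given by the wedge product, which is a perfect pairing over $\mathbb{Z}$ (the canonical basis elements $e_I$ and $e_{I^c}$ pair to $\pm e_{\{1,\dots,n\}}$). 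This identifies $\Lambda^{n-r}\mathbb{Z}^n$ with the $\mathbb{Z}$-dual $(\Lambda^r\mathbb{Z}^n)^\ast$. First I would fix a choice of generator of $\Lambda^n\mathbb{Z}^n$ and use it to define the Hodge-star-type isomorphism $D:\Lambda^r\mathbb{Z}^n\xrightarrow{\sim}(\Lambda^{n-r}\mathbb{Z}^n)^\ast$.

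Next I would compute how $\wedge^r\hat\alpha$ behaves under this pairing. For $\omega\in\Lambda^r\mathbb{Z}^n$ and $\eta\in\Lambda^{n-r}\mathbb{Z}^n$ we have $(\wedge^r\hat\alpha\,\omega)\wedge(\wedge^{n-r}\hat\alpha\,\eta)=(\wedge^n\hat\alpha)(\omega\wedge\eta)=\det(\hat\alpha)\,(\omega\wedge\eta)=\omega\wedge\eta$, using $\det\hat\alpha=1$. Hence, under the identification $D$, the operator $\wedge^{n-r}\hat\alpha$ on $\Lambda^{n-r}\mathbb{Z}^n$ corresponds to the inverse transpose $\bigl((\wedge^r\hat\alpha)^{-1}\bigr)^\ast$ of $\wedge^r\hat\alpha$ acting on $(\Lambda^r\mathbb{Z}^n)^\ast$. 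Therefore $\wedge^{n-r}\hat\alpha-\mathrm{id}$ corresponds to $\bigl((\wedge^r\hat\alpha)^{-1}\bigr)^\ast-\mathrm{id}$. It remains to see that this is equivalent, as an endomorphism of a free $\mathbb{Z}$-module, to $\wedge^r\hat\alpha-\mathrm{id}$ itself: writing $\beta:=\wedge^r\hat\alpha\in\mathrm{GL}(\Lambda^r\mathbb{Z}^n)$, one has $\beta^{-1}-\mathrm{id}=-\beta^{-1}(\beta-\mathrm{id})$, and $-\beta^{-1}$ is invertible over $\mathbb{Z}$, so $\beta^{-1}-\mathrm{id}$ is right-equivalent to $\beta-\mathrm{id}$; passing to transposes (which preserves Smith normal form, hence equivalence) shows $(\beta^{-1})^\ast-\mathrm{id}=(\beta^{-1}-\mathrm{id})^\ast$ is equivalent to $(\beta-\mathrm{id})^\ast$, which is equivalent to $\beta-\mathrm{id}$. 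Composing these equivalences with the isomorphism $D$ gives that $\wedge^{n-r}\hat\alpha-\mathrm{id}$ and $\wedge^r\hat\alpha-\mathrm{id}$ are equivalent as endomorphisms of $\mathbb{Z}^{\binom{n}{r}}$, and in particular have isomorphic cokernels.

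The bookkeeping I expect to be slightly delicate, rather than deep, is keeping track of signs and of the precise meaning of ``equivalent'' from Definition~\ref{equiv}: one must check that the relevant notion is exactly ``$M$ and $PMQ$ for $P,Q$ invertible over $\mathbb{Z}$'', so that both the change-of-basis by $D$, the left multiplication by the unit $-\beta^{-1}$, and the transpose step all legitimately preserve it. The transpose step itself is harmless because a matrix and its transpose always have the same Smith normal form, hence equivalent (and isomorphic) cokernels; I would cite the Smith normal form discussion in Appendix~\ref{smith} for this. The only genuinely hypothesis-using point is the single identity $\wedge^n\hat\alpha=\det(\hat\alpha)\cdot\mathrm{id}$ together with $\det\hat\alpha=1$, which is what forces the duality to intertwine $\wedge^r\hat\alpha$ with $(\wedge^{n-r}\hat\alpha)^{-\ast}$ without an extra sign or scalar; if $\det\hat\alpha=-1$ the argument breaks precisely here. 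Finally, the ``equivalently'' clause about cokernels is immediate since equivalent integer matrices have the same Smith normal form and hence isomorphic cokernels.
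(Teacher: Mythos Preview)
Your proposal is correct and follows essentially the same route as the paper's proof: the paper also uses the wedge pairing $\Lambda^r\mathbb{Z}^n\times\Lambda^{n-r}\mathbb{Z}^n\to\Lambda^n\mathbb{Z}^n$ and the identity $\wedge^n\hat\alpha=\det(\hat\alpha)\cdot\mathrm{id}$ to obtain (in matrix form) $\mathsf{C}\mathsf{B}=\mathsf{I}$ with $\mathsf{C}$ equal to $\mathsf{A}^{\!\top}$ up to diagonal sign matrices, and then concludes via $\mathsf{C}-\mathsf{I}=\mathsf{C}(\mathsf{B}-\mathsf{I})(-\mathsf{I})$ together with transpose-invariance of the Smith normal form. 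Your version simply packages the same computation in the language of the Hodge-type isomorphism $D$ and the inverse-transpose, whereas the paper writes out the entries and the signs $(-1)^{\ell(I)}$ explicitly.
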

\begin{proof}
We prove the equivalence of the endomorphisms for their corresponding integer matrices with respect to
a certain basis. Let $\mathcal{E}=\{e_1,\ldots,e_n \}$
be a basis for $\Bbb{Z}^n$ and set $S=\{1,2,\ldots,n\}$. For $I 
=\{i_1,\ldots,i_r \}\subset S$ with
$1 \leq i_1<\ldots<i_r \leq n$, put $e_I=e_{i_1}\wedge\ldots\wedge 
e_{i_r}\in \Lambda^r
\Bbb{Z}^n$. Then $\mathcal{E}_r:=\{e_I \mid I \subset S~,~|I|=r \}$ is a 
basis
for $\Lambda^r \Bbb{Z}^n$. Let $\omega:=e_1 \wedge\ldots\wedge e_n$, which 
generates
$\Lambda^n \Bbb{Z}^n$. We have $\wedge^0 \hat\alpha-\mathrm{id}=0$, and
$\wedge^n 
\hat\alpha(\omega)=\hat\alpha(e_1)\wedge\ldots\wedge\hat\alpha(e_n)
=(\det\hat\alpha)(e_1 \wedge\ldots\wedge e_n)=\omega$, so $\wedge^n 
\hat\alpha-\mathrm{id}=0$.
Now, fix an $r \in\{1,\ldots,n-1\}$. For an arbitrary subset $I \subset S$ 
with
$|I|=r$, take $J=\mathcal{E}\setminus I=\{j_1,\ldots,j_{n-r} \}$, so 
$|J|=n-r$. Then $e_I \wedge e_J=(\mathrm{sgn}\hspace{2pt}\mu)\,\omega$, in which $\mu\in 
S_n$ is the permutation that converts $(1,2,\ldots,n)$ to $(i_1,\ldots,i_r,j_1,\ldots,j_{n-r})$. It 
is easily seen that $\mu=\mu_1 \ldots \mu_r$, where $\mu_k$ is the permutation that takes 
$i_k$ from its position in $(1,2,\ldots,n)$ to its new position in $(i_1,\ldots,i_r,j_1,\ldots,j_{n-r})$.
One can see that $\mu_k$ is the combination of $i_{k}-(r-k+1)$ number of transpositions ($k=1,\ldots,r$). Thus
$$\mathrm{sgn}\hspace{2pt}\mu=\prod_{k=1}^r(-1)^{i_{k}-(r-k+1)}=
(-1)^{\ell(I)-\frac{r(r+1)}{2}},$$ where $\ell(I):=\sum_{k=1}^r
i_k$. Now, take $m=\binom{n}{r}=\binom{n}{n-r}$ and let
$\mathcal{E}_r=\{e_{I_1},\ldots, e_{I_m}\}$ be a basis
for $\Lambda^r \Bbb{Z}^n$. Write $\mathcal{E}_{n-r}=
\{e_{J_1},\ldots,e_{J_m}\}$ as the basis for
$\Lambda^{n-r} \Bbb{Z}^n$ such that $J_k=\mathcal{E}\setminus
I_k$ for $k=1,\ldots,m$. From the above argument one can write
$$e_{I_i}\wedge 
e_{J_j}=(-1)^{\ell(I_i)-\frac{r(r+1)}{2}}\delta_{ij}\omega,$$
since if $i \ne j$ then $I_i \cap J_j \ne \emptyset$ and $e_{I_i}\wedge 
e_{J_j}=0$.
Let $\mathsf{A}=[a_{ij}]_{m \times m}$ and $\mathsf{B}=[b_{ij}]_{m 
\times m}$ be the corresponding
integer matrices of $\wedge^r \hat\alpha$ and $\wedge^{n-r} \hat\alpha$ with 
respect to
$\mathcal{E}_r$ and $\mathcal{E}_{n-r}$, respectively. So
$\wedge^r \hat\alpha(e_{I_i})=\sum_{p=1}^m a_{pi}e_{I_p}$ and
$\wedge^{n-r} \hat\alpha(e_{J_j})=\sum_{q=1}^m b_{qj}e_{J_q}$. What we want 
to show is that
$\mathsf{A}-\mathsf{I}$ is equivalent to $\mathsf{B}-\mathsf{I}$. We have
%\begin{multline*}
$$
\wedge^n \hat\alpha(e_{I_i}\wedge e_{J_j})
=(-1)^{\ell(I_i)-\frac{r(r+1)}{2}}\delta_{ij}\omega=
\wedge^r
\hat\alpha(e_{I_i})\bigwedge\wedge^{n-r}\hat\alpha(e_{J_j})=
\sum_{p,q=1}^m
a_{pi}b_{qj}(-1)^{\ell(I_p)-\frac{r(r+1)}{2}}\delta_{pq}\omega.
$$
%\end{multline*}
Therefore one obtains
\begin{equation}\label{CB=I}
\sum_{k=1}^m (-1)^{\ell(I_k)-\ell(I_i)}a_{ki}b_{kj}=\delta_{ij}.
\end{equation}
Now, if we set $c_{ij}:=(-1)^{\ell(I_j)-\ell(I_i)}a_{ji}$ and
$\mathsf{C}:=[c_{ij}]_{m \times m}$, then
$c_{ij}-\delta_{ij}=(-1)^{\ell(I_j)-\ell(I_i)}(a_{ji}-\delta_{ji})$.
Therefore $\mathsf{C}-\mathsf{I}$ is obtained from $\mathsf{A}-\mathsf{I}$ 
by changing rows (and columns)
and occasionally multiplying some rows (and columns) by $-1$. This means that
$\mathsf{C}-\mathsf{I}$ is equivalent to $\mathsf{A}-\mathsf{I}$.
On the other hand, the equation (\ref{CB=I}) means that ${\sf CB=I}$. So
$\mathsf{C}-\mathsf{I}=\mathsf{C}(\mathsf{B}-\mathsf{I})(-\mathsf{I})$ 
and $\mathsf{B}-\mathsf{I}$ is also equivalent to $\mathsf{C}-\mathsf{I}$. Consequently, $\mathsf{A}-\mathsf{I}$ is 
equivalent to $\mathsf{B}-\mathsf{I}$.
\end{proof}

\begin{coro}
If  $\det\hat\alpha=1$, then $\mathrm{rank}\ker(\wedge^{r}
\hat\alpha-\mathrm{id})= \mathrm{rank}\ker(\wedge^{n-r} \hat\alpha-\mathrm{id})$.
In particular, using Notation \ref{not:a_n}, we have $a_{n,r}=a_{n,n-r}$ for $r=0,1,\ldots,n$.
\end{coro}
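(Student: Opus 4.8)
The plan is to read this off directly from Proposition \ref{poincare}, which already carries all the weight. Fix $\hat\alpha\in\mathrm{SL}(n,\Bbb{Z})$ and $r\in\{0,1,\ldots,n\}$. By that proposition the endomorphisms $\wedge^r\hat\alpha-\mathrm{id}$ and $\wedge^{n-r}\hat\alpha-\mathrm{id}$ of $\Lambda^r\Bbb{Z}^n\cong\Lambda^{n-r}\Bbb{Z}^n\cong\Bbb{Z}^{\binom{n}{r}}$ are equivalent, and in particular $\mathrm{coker}(\wedge^r\hat\alpha-\mathrm{id})\cong\mathrm{coker}(\wedge^{n-r}\hat\alpha-\mathrm{id})$ as finitely generated abelian groups.

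First I would note that isomorphic finitely generated abelian groups have the same rank, so $\mathrm{rank}\,\mathrm{coker}(\wedge^r\hat\alpha-\mathrm{id})=\mathrm{rank}\,\mathrm{coker}(\wedge^{n-r}\hat\alpha-\mathrm{id})$. Next, exactly as in the proof of Corollary \ref{coro:3}, the Smith normal form theorem (Theorem \ref{theo:smith}) gives $\mathrm{rank}\ker\varphi=\mathrm{rank}\,\mathrm{coker}\,\varphi$ for any endomorphism $\varphi$ of a free abelian group of finite rank. Applying this identity to $\varphi=\wedge^r\hat\alpha-\mathrm{id}$ and to $\varphi=\wedge^{n-r}\hat\alpha-\mathrm{id}$, and combining with the previous equality, yields $\mathrm{rank}\ker(\wedge^r\hat\alpha-\mathrm{id})=\mathrm{rank}\ker(\wedge^{n-r}\hat\alpha-\mathrm{id})$, which is the first assertion.

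For the final statement I would specialize to the Anzai case $\hat\alpha=\hat\sigma$, whose matrix on $\Bbb{Z}^n$ is the full Jordan block $\mathsf{S}_n$. Since $\mathsf{S}_n$ is upper triangular with every diagonal entry equal to $1$, we have $\det\hat\sigma=\det\mathsf{S}_n=1$, so $\hat\sigma\in\mathrm{SL}(n,\Bbb{Z})$ and the first part applies. By Notation \ref{not:a_n} we have $a_{n,r}=\mathrm{rank}\ker(\wedge^r\mathsf{S}_n-\mathsf{I})$, hence $a_{n,r}=a_{n,n-r}$ for $r=0,1,\ldots,n$.

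There is essentially no obstacle here beyond Proposition \ref{poincare} itself; the one point worth keeping in mind is that the hypothesis $\det\hat\alpha=1$ cannot be relaxed to $\det\hat\alpha=\pm1$, since for $\det\hat\alpha=-1$ the map $\wedge^n\hat\alpha-\mathrm{id}$ equals $-2\,\mathrm{id}$ on $\Lambda^n\Bbb{Z}^n$ and has trivial kernel, whereas $\wedge^0\hat\alpha-\mathrm{id}=0$ always has kernel of rank $1$, so the duality genuinely fails in that case.
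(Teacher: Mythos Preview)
Your argument is correct and is precisely the route the paper intends: the corollary is stated immediately after Proposition~\ref{poincare} with no separate proof, and your steps---cokernel isomorphism from equivalence, then $\mathrm{rank}\ker=\mathrm{rank}\,\mathrm{coker}$ via the Smith normal form as in Corollary~\ref{coro:3}, then specialization to $\mathsf{S}_n$---are exactly the details the paper leaves implicit. The closing remark on $\det\hat\alpha=-1$ is a nice sanity check but not needed for the proof itself.
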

\noindent We are now ready to apply our Poincar\'{e} duality to a the following
$K$-theoretic result.
\begin{theo}\label{odd-tori}
Let $A:=\mathcal{C}(\Bbb{T}^{2m-1})\rtimes_\alpha\Bbb{Z}$ be such that the 
corresponding homomorphism $\hat\alpha$ satisfies $\det\hat\alpha=1$. Then $K_0(A)\cong K_1(A)$ as abelian groups,
 and
the (common) rank of the $K$-groups of $A$ is an even number.
In particular, for every Furstenberg transformation group $C^*$-algebra 
${\mathscr F}_{\theta,{\boldsymbol f}}$
based on an \text{odd-dimensional} torus (e.g. $\mathscr{A}_{2m-1,\theta}$), 
one has
$K_0({\mathscr F}_{\theta,{\boldsymbol f}})\cong K_1({\mathscr F}_{\theta,{\boldsymbol f}})$.
\end{theo}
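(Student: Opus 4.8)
The plan is to read off the $K$-groups from Theorem \ref{theo:1} and to pair complementary exterior powers by means of the Poincar\'{e} duality of Proposition \ref{poincare}, exploiting the elementary fact that on an \emph{odd}-dimensional torus $\Bbb{T}^{n}$, $n=2m-1$, the substitution $r\mapsto n-r$ is a \emph{parity-reversing} bijection of $\{0,1,\ldots,n\}$ onto itself.

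First I would rewrite the decompositions of Theorem \ref{theo:1} in the form
$$K_0(A)\cong\Big(\bigoplus_{r\ \mathrm{even}}\mathrm{coker}(\wedge^{r}\hat\alpha-\mathrm{id})\Big)\oplus\Big(\bigoplus_{r\ \mathrm{odd}}\ker(\wedge^{r}\hat\alpha-\mathrm{id})\Big),$$
$$K_1(A)\cong\Big(\bigoplus_{r\ \mathrm{odd}}\mathrm{coker}(\wedge^{r}\hat\alpha-\mathrm{id})\Big)\oplus\Big(\bigoplus_{r\ \mathrm{even}}\ker(\wedge^{r}\hat\alpha-\mathrm{id})\Big),$$
both indices running over $0\le r\le 2m-1$. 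Since $\det\hat\alpha=1$, Proposition \ref{poincare} says that $\wedge^{r}\hat\alpha-\mathrm{id}$ and $\wedge^{n-r}\hat\alpha-\mathrm{id}$ are \emph{equivalent} endomorphisms of $\Bbb{Z}^{\binom{n}{r}}$. I would then note that equivalence in the sense of Definition \ref{equiv} (pre- and post-composition with automorphisms of the free module) transports kernels and cokernels \emph{as abelian groups}, not merely their ranks, so $\mathrm{coker}(\wedge^{r}\hat\alpha-\mathrm{id})\cong\mathrm{coker}(\wedge^{n-r}\hat\alpha-\mathrm{id})$ and $\ker(\wedge^{r}\hat\alpha-\mathrm{id})\cong\ker(\wedge^{n-r}\hat\alpha-\mathrm{id})$. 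Applying $r\mapsto n-r$ termwise to the displayed description of $K_0(A)$ — which sends even indices to odd ones and odd to even — turns it into exactly the displayed description of $K_1(A)$; hence $K_0(A)\cong K_1(A)$.

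For the parity of the common rank $a$, I would invoke Corollary \ref{coro:3}, which gives $a=\sum_{r=0}^{2m-1}\mathrm{rank}\ker(\wedge^{r}\hat\alpha-\mathrm{id})$, together with the corollary of Proposition \ref{poincare}, which gives $\mathrm{rank}\ker(\wedge^{r}\hat\alpha-\mathrm{id})=\mathrm{rank}\ker(\wedge^{n-r}\hat\alpha-\mathrm{id})$. Because $n=2m-1$ is odd one always has $r\neq n-r$, so the index set $\{0,\ldots,2m-1\}$ splits into the $m$ two-element blocks $\{r,\,n-r\}$ for $r=0,\ldots,m-1$, on each of which the two rank contributions agree; therefore $a=2\sum_{r=0}^{m-1}\mathrm{rank}\ker(\wedge^{r}\hat\alpha-\mathrm{id})$ is even.

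For the Furstenberg case it remains only to check the hypothesis $\det\hat\alpha=1$. I would observe that the $i$-th coordinate of the linearized $\varphi_{\theta,{\boldsymbol f}}^{-1}$ is (homotopic to) a monomial in $z_1,\ldots,z_i$ in which $z_i$ occurs with exponent $1$ — it is $e^{2\pi i\theta}\zeta_1$ for $i=1$ and $f_{i-1}(\zeta_1,\ldots,\zeta_{i-1})\zeta_i$ for $i\geq 2$ — so the associated matrix $\mathsf{A}\in\mathrm{GL}(n,\Bbb{Z})$ is triangular with all diagonal entries $1$, whence $\det\hat\alpha=\det\mathsf{A}=1$. Thus the first part applies to $A={\mathscr F}_{\theta,{\boldsymbol f}}$ whenever the underlying torus is odd-dimensional, the Anzai algebra $\mathscr{A}_{2m-1,\theta}$ being the special case $\mathsf{A}=\mathsf{S}_{2m-1}$. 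The only genuinely delicate step is the bookkeeping in the termwise matching of the two direct-sum decompositions under $r\mapsto n-r$ (and the observation that ``equivalent endomorphism'' is strong enough to carry along the full groups, not just their ranks); everything else is immediate from the results already in place.
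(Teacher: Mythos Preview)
Your argument is correct and follows essentially the same route as the paper: both combine Theorem \ref{theo:1} with the Poincar\'{e} duality of Proposition \ref{poincare}, using that $r\mapsto n-r$ reverses parity when $n=2m-1$ is odd, and both verify $\det\hat\alpha=1$ for Furstenberg transformations via the triangular form of the matrix. The only cosmetic difference is in the evenness step: the paper first rewrites the common $K$-group as $\bigoplus_{k=0}^{m-1}[\mathrm{coker}(\wedge^{k}\hat\alpha-\mathrm{id})\oplus\ker(\wedge^{k}\hat\alpha-\mathrm{id})]$ and then invokes $\mathrm{rank}\,\mathrm{coker}=\mathrm{rank}\,\ker$ for each summand, whereas you pair $\ker(\wedge^{r}\hat\alpha-\mathrm{id})$ with $\ker(\wedge^{n-r}\hat\alpha-\mathrm{id})$ directly in the rank sum; both are immediate.
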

\begin{proof}
Combining Theorem \ref{theo:1} and Proposition \ref{poincare}, one 
obtains
$$K_0(A)\cong K_1(A)\cong\bigoplus_{k=0}^{m-1}[\mathrm{coker}(\wedge^k 
\hat\alpha-\mathrm{id})
\oplus \ker(\wedge^k \hat\alpha-\mathrm{id})].$$
As a result, the rank of the $K$-groups of $A$ is an even number since the 
ranks of
the cokernel and kernel of an endomorphism coincide. Note that for 
${\mathscr F}_{\theta,{\boldsymbol f}}$
the corresponding integer matrix of $\hat\alpha$ is an upper triangular 
matrix with
1's on the diagonal. Thus $\det\hat\alpha=1.$
\end{proof}

\section{The rank $a_n$ of the $K$-groups of $\mathscr{A}_{n,\theta}$}\label{sec:rank}

\noindent In this section, we study some general properties of $a_n$, the (common) rank of the $K$-groups of
Anzai transformation group $C^*$-algebras based on $\Bbb T^n$. We specify a
family of \text{$C^*$-algebras}, whose ranks of $K$-groups are given by the same sequence 
$\{a_n \}$. As an application, we characterize the rank of the $K$-groups of 
Furstenberg transformation group \text{$C^*$-algebras}
${\mathscr F}_{\theta,{\boldsymbol f}}$. In Appendix \ref{cocompact}, this study will 
have some applications to the classification of simple infinite dimensional quotients of the Heisenberg-type group $C^*$-algebras $C^*(\mathfrak{D}_n)$, which were studied in an earlier work \cite{kR06}. We remind the reader of some linear algebraic properties of nilpotent and unipotent 
matrices in Appendix \ref{nilpotent}.\\

\noindent We compare the ranks of the $K$-groups of a class of $C^*$-algebras of the form $\mathcal{C}(\Bbb{T}^n)\rtimes_\alpha\Bbb{Z}$ in the following theorem, which shows that the rank $a_n$ 
of the $K$-groups of $\mathscr{A}_{n,\theta}$ is somehow generic. 

\begin{theo}\label{rank}
Let $A=\mathcal{C}(\Bbb{T}^n)\rtimes_\alpha\Bbb{Z}$, in which $\alpha$ is a 
homeomorphism
of $\Bbb{T}^n$, whose corresponding integer matrix 
$\mathsf{A}\in\mathrm{GL}(n,\Bbb{Z})$ is unipotent of
maximal degree (i.e. $\deg(\mathsf{A})=n$). Then
$$\mathrm{rank}\hspace{2pt}K_0(A)=
\mathrm{rank}\hspace{2pt}K_1(A)=a_n
=\mathrm{rank}\hspace{2pt}K_0(\mathscr{A}_{n,\theta})
=\mathrm{rank}\hspace{2pt}K_1(\mathscr{A}_{n,\theta}).$$
In particular, the rank of the $K$-groups of any Furstenberg transformation 
group \text{$C^*$-algebra} ${\mathscr F}_{\theta,{\boldsymbol f}}=
\mathcal{C}(\Bbb{T}^n)\rtimes_{\varphi_{\theta,{\boldsymbol f}}}\Bbb{Z}$ is
equal to the rank of the $K$-groups of $\mathscr{A}_{n,\theta}$, namely, to 
$a_n$.
\end{theo}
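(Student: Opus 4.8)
The plan is to show that the quantity $\mathrm{rank}\,\ker(\wedge^r\hat\alpha-\mathrm{id})$, summed over $r=0,1,\ldots,n$, depends only on the Jordan type of $\hat\alpha$ over $\Bbb Q$, and not on the finer $\Bbb Z$-structure of the matrix $\mathsf A$. By Corollary \ref{coro:3}, the common rank of the $K$-groups of $\mathcal C(\Bbb T^n)\rtimes_\alpha\Bbb Z$ is $\sum_{r=0}^n\mathrm{rank}\,\ker(\wedge^r\hat\alpha-\mathrm{id})$, so it suffices to prove that this sum is the same for $\mathsf A$ as it is for the full Jordan block $\mathsf S_n$ (whose associated crossed product is $\mathscr A_{n,\theta}$ and whose rank sum is $a_n$ by Notation \ref{not:a_n}). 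First I would replace the integer matrices by their images in $\mathrm{GL}(n,\Bbb C)$: since $\ker(\wedge^r\hat\alpha-\mathrm{id})$ is a finitely generated free abelian group, its rank equals $\dim_{\Bbb C}\ker(\wedge^r\mathsf A-\mathsf I)$ acting on $\Lambda^r\Bbb C^n$, i.e. the multiplicity of the eigenvalue $1$ of $\wedge^r\mathsf A$. So the entire problem becomes a question about eigenvalue multiplicities of exterior powers, which is insensitive to conjugation in $\mathrm{GL}(n,\Bbb C)$.

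Next I would invoke the hypothesis that $\mathsf A$ is unipotent of maximal degree $n$: this means $(\mathsf A-\mathsf I)^{n-1}\ne 0$ while $(\mathsf A-\mathsf I)^n=0$, so the single Jordan block structure is forced — $\mathsf A$ is conjugate over $\Bbb C$ (indeed over $\Bbb Q$) to $\mathsf S_n$, a single unipotent Jordan block of size $n$. Here I would cite the linear-algebra facts on unipotent matrices collected in Appendix \ref{nilpotent}. Once $\mathsf A$ and $\mathsf S_n$ are conjugate in $\mathrm{GL}(n,\Bbb C)$, the identity $\wedge^r(\hat\phi\circ\hat\psi)=(\wedge^r\hat\phi)\circ(\wedge^r\hat\psi)$ (Appendix \ref{endomorphism}) shows $\wedge^r\mathsf A$ and $\wedge^r\mathsf S_n$ are conjugate in $\mathrm{GL}\bigl(\binom nr,\Bbb C\bigr)$ for every $r$, hence $\wedge^r\mathsf A-\mathsf I$ and $\wedge^r\mathsf S_n-\mathsf I$ have equal rank, equal nullity, and the nullities sum to the same value. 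This gives $\mathrm{rank}\,K_j(A)=\sum_r\mathrm{rank}\,\ker(\wedge^r\mathsf S_n-\mathsf I)=a_n$ for $j=0,1$.

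For the final sentence, I would recall that the linearized form of $\varphi_{\theta,\boldsymbol f}$ (as computed in Section \ref{sec:general}, via the exponents $a_{ji}=A_j[f_i]$) has integer matrix $\mathsf A$ that is lower (or upper, depending on convention) triangular with all diagonal entries equal to $1$, and with the sub/super-diagonal entries $A_i[f_i]\ne 0$ for $i=1,\ldots,n-1$; this is exactly the structure recorded in Definition \ref{furst-anzai} and used in the proof of Theorem \ref{odd-tori}. Such a matrix is unipotent, and the nonvanishing of the off-diagonal entries forces $(\mathsf A-\mathsf I)^{n-1}\ne 0$, so $\deg(\mathsf A)=n$ and the main statement applies. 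The one step I expect to require a little care is confirming that the maximal-degree hypothesis genuinely pins down the rational Jordan type to a single block rather than merely bounding the largest block size — but for a nilpotent operator on an $n$-dimensional space, a largest Jordan block of size $n$ leaves no room for any other block, so the argument closes; verifying that $A_i[f_i]\ne 0$ indeed yields $(\mathsf A-\mathsf I)^{n-1}\ne 0$ for the Furstenberg matrix is a short direct computation with the strictly triangular part.
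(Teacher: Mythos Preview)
Your proposal is correct and follows essentially the same route as the paper's own proof: pass from $\Bbb Z$-rank to $\Bbb C$-dimension of kernels, use that unipotent of maximal degree forces similarity to $\mathsf S_n$ over $\Bbb C$ (Corollary~\ref{coro:similar}), transfer the similarity to exterior powers, and then verify via Example~\ref{furst-matrix} that the Furstenberg matrix $(\heartsuit)$ is unipotent of maximal degree because $b_{i,i+1}\ne 0$. The only cosmetic difference is that the paper cites Example~\ref{furst-matrix} directly for the last step rather than sketching the computation with $(\mathsf A-\mathsf I)^{n-1}$.
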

\begin{proof}
Let $\hat{\alpha}$ denote the restriction of $\alpha_*$ to $\Bbb{Z}^n$ and 
$\mathsf{A}$ be
the corresponding matrix of $\hat\alpha$ acting on $\Bbb{Z}^n$. Also, let 
$\mathsf{S}_n$ be the corresponding matrix for $\mathscr{A}_{n,\theta}$ as denoted
in Section \ref{sec:Anzai}. Since $\mathsf{A}$ 
is unipotent of maximal degree by assumption, and $\mathsf{S}_n$ is unipotent of maximal 
degree too,  the matrices $\mathsf{A}$ and $\mathsf{S}_n$ are similar over $\Bbb C$ (see Corollary \ref{coro:similar}). In fact, the Jordan normal form of $\mathsf{A}-\mathsf{I}$ is precisely $\mathsf{S}_n-\mathsf{I}$.
On the other hand, we know by Corollary \ref{coro:3} that the rank of the 
$K$-groups of $A$ is equal to $\mathrm{rank}\ker(\wedge^*\mathsf{A}-\mathsf{I})$. Note that by the Smith normal form theorem (see Theorem \ref{theo:smith}), $\mathrm{rank}\ker(\wedge^*\mathsf{A}-\mathsf{I})=
\dim_{\Bbb C}\ker(\wedge^*\mathsf{A}-\mathsf{I})$. The similarity of $\mathsf{A}$ and $\mathsf{S}_n$ implies the similarity of $\wedge^*\mathsf{A}-\mathsf{I}$ and $\wedge^*\mathsf{S}_n-\mathsf{I}$ as matrices acting on $\Lambda^*\Bbb{C}^n$. So $\dim_{\Bbb{C}}\ker(\wedge^*\mathsf{A}-\mathsf{I})=\dim_{\Bbb{C}}\ker(\wedge^*\mathsf{S}_n-\mathsf{I})=a_n$, which yields the 
result.\\

\noindent For the second part, note that the corresponding integer matrix of a 
Furstenberg transformation $\varphi_{\theta,{\boldsymbol f}}$ on $\Bbb{T}^n$ is of the form

\[\left(\begin{array}{ccccc}
1 & b_{12} & b_{13} & \cdots& b_{1n}\\
0 & 1 & b_{23} & &\vdots\\
0 &\ddots&\ddots&\ddots& b_{n-2,n}\\ \tag{$\heartsuit$}
\vdots&   & 0 & 1 & b_{n-1,n}\\
0 &\cdots  & 0 & 0 & 1
\end{array}
\right)_{n \times n}
\]
which is unipotent of maximal degree since $b_{i,i+1}\ne 0$ for $i=1,\ldots,n-1$ (see Definition \ref{furst-anzai}
and Example \ref{furst-matrix}). Now, the proof of
the first part yields the result.
\end{proof}

\begin{rema}\label{rema:3}
In the preceding theorem, the basis for $\Bbb{Z}^n$ for the 
matrices involved is $\{e_1,\ldots,e_n \}$, where $e_i:=[z_i]_1$
as introduced at the beginning of Section \ref{sec:general}. It is interesting to know that
if $\hat{\alpha}$ is an arbitrary unipotent automorphism of $\Bbb{Z}^n$, 
then there is a basis for $\Bbb{Z}^n$ with respect to which the integer matrix $\mathsf{A}$ of 
$\hat{\alpha}$ is of the form
$(\heartsuit)$ above (but not necessarily with $b_{i,i+1}\ne 0$ for 
$i=1,\ldots,n-1$,
unless $\hat{\alpha}$ is of maximal degree) \cite[Theorems 16 and 18]{fjH63}.
The unipotency of $\hat{\alpha}$ has also important effects on the dynamics 
of the generated flow on $\Bbb T^n$. For example, if $\alpha$ is an affine transformation on $\Bbb{T}^n$ 
and $\hat{\alpha}$
is unipotent, then the dynamical system $(\Bbb{T}^n,\alpha)$ has 
quasi-discrete
spectrum \cite[Theorem 19]{fjH63}. More generally, let
$\alpha=(\boldsymbol{t},\mathsf{A})$ be an affine transformation on 
$\Bbb{T}^n$
and take $Z_p(\mathsf{A})=
\ker(\mathsf{A}^p-\mathrm{id})\subset\Bbb{Z}^n$ for $p \in \Bbb{N}$  and consider the 
following conditions

\begin{itemize}
\item [\textnormal{(1)}] $Z_1(\mathsf{A})=Z_p(\mathsf{A})$, $\forall p 
\in\Bbb{N},$
\item [\textnormal{(2)}] $\boldsymbol{t}$ is rationally independent over
$Z_1(\mathsf{A}),$ i.e. if $\boldsymbol{k}=(k_1,\ldots,k_n)\in 
Z_1(\mathsf{A})$
is such that $\langle \boldsymbol{t},\boldsymbol{k}\rangle:=\sum_{j=1}^n t_j 
k_j$ is a rational
number, then $\boldsymbol{k}=\boldsymbol 0$.
\item [\textnormal{(3)}] $Z_1(\mathsf{A})\ne \{0\},$
\item [\textnormal{(4)}] $\mathsf{A}$ is unipotent.
\end{itemize}

\noindent Then $(\Bbb{T}^n,\alpha)$ is ergodic with respect to Haar measure if and 
only if $\alpha$
satisfies the conditions (1) and (2) \cite{fjH63}. Moreover, if $\alpha$ satisfies the conditions (1) 
through (4), then the dynamical system
$(\Bbb{T}^n,\alpha)$ is minimal, uniquely ergodic with respect to Haar 
measure, and has
quasi-discrete spectrum. Conversely, any minimal transformation on 
$\Bbb{T}^n$ with
topologically quasi-discrete spectrum is conjugate to an affine 
transformation which
must satisfy the conditions (1) through (4) \cite{fjH65}. The $C^*$-algebras corresponding 
to such
actions are therefore simple and have a unique tracial state.
\end{rema}

\section{Combinatorial properties of the sequence $\{a_{n}\}$} \label{sec:comb}

\noindent As mentioned before, one of our main goals is to describe $a_n$ as the rank of the 
$K$-groups of $\mathscr{A}_{n,\theta}$. Since $a_n=\sum_{r=0}^n a_{n,r}$, it makes sense to first study $a_{n,r}$.
So we begin by finding some combinatorial properties of $a_{n,r}$, which is the 
rank of $\ker(\wedge^r \hat\sigma-\mathrm{id})$ for $r=0,1,\ldots,n$, where $\hat\sigma$ is the automorphism of $\Bbb Z^n$ corresponding to the Anzai transformation $\sigma$ on $\Bbb T^n$, and is represented by the integer matrix $\mathsf{S}_n$ as in the beginning of Section \ref{sec:Anzai}. In fact, we will show 
that $a_{n,r}$ equals the number of partitions of $[r(n+1)/2]$ to $r$ distinct positive
integers not greater than $n$.  To do this, we will use properties of the
irreducible representations of the simple Lie algebra 
$\mathfrak{sl}(2,\Bbb{C})$.

\subsection{Connections with representation theory of $\mathfrak{sl}(2,\mathbb{C})$}
The automorphism $\hat\sigma$ is realized through its action on the basis $\{e_1,\ldots,e_n\}$ of $\Bbb Z^n$,
where $e_i:=[z_i]$ for $i=1,\ldots,n$ as in Section \ref{sec:general}, and we have $\hat\sigma(e_i)=e_i+e_{i-1}$ with $e_0:=0$. Therefore introducing a new endomorphism of $\Bbb Z^n$ by $\hat\varphi:=\hat\sigma-\mathrm{id}$, we will get $$\hat\varphi(e_i)=e_{i-1}.$$ This is
precisely a relation that may be recognized as part of the data of the canonical representation $\pi_n$ of  the Lie algebra $\mathfrak{sl}(2,\mathbb{C})$ on a complex vector space $V$ with basis $\{e_1,\ldots,e_n\}$. More precisely, we have $$\hat\varphi=\pi_n(f),$$
where the canonical representation $\pi_n$ and the (third)
basis element $f\in\mathfrak{sl}(2,\mathbb{C})$ are defined in 
Appendix \ref{sl_2}. The endomorphism $\hat\varphi$ induces a derivation on $\Lambda^{r}V$, which is defined by
$$\mathcal{D}^{\hspace{1pt}r}{\hat\varphi}(x_1 
\wedge\ldots\wedge x_r)=\sum_{i=1}^{r}x_1 \wedge\ldots
\wedge{\hat\varphi}(x_i)\wedge\ldots\wedge x_r$$
for $r=2,\ldots,n$ and  $x_i \in V$, and by setting $\mathcal{D}^{\hspace{1pt}0}{\hat\varphi}:=0$, 
and $\mathcal{D}^{\hspace{1pt}1}{\hat\varphi}:=\hat\varphi$ (see Appendix \ref{endomorphism}). Then the following result
states that $a_{n,r}$ equals the nullity of the linear mapping $\mathcal{D}^{\hspace{1pt}r}{\hat\varphi}$.

\begin{prop}
   Let $\sigma$ be an Anzai transformation on $\Bbb{T}^n$ and $\sigma_{*}$
   be the corresponding induced homomorphism on 
$K_*(\mathcal{C}(\mathbb{T}^n))=\Lambda^*\Bbb
   {Z}^n$. Let $\hat\sigma$ be the restriction of $\sigma_{*}$ to 
$\mathbb{Z}^n$ and consider
   the linear mapping $\hat\sigma\otimes{1}$ on $V:=\Bbb{Z}^n 
\otimes\Bbb{C}$. Take
   ${\hat\varphi}=\hat\sigma\otimes{1}-\mathrm{id}$ and 
$\mathcal{D}^{\hspace{1pt}r}{\hat\varphi}$ 
as above. Then
     $$a_{n,r}=\mathrm{rank}\ker(\wedge^{r}\hat\sigma-\mathrm{id})=
\dim\ker\mathcal{D}^{\hspace{1pt}r}{\hat\varphi}.$$
   \end{prop}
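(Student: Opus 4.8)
The plan is to unwind both sides and see that they are literally computing the same number, the only subtlety being the passage from $\Bbb{Z}$ to $\Bbb{C}$. First I would observe that $a_{n,r}$ is by definition $\mathrm{rank}\ker(\wedge^r\hat\sigma-\mathrm{id})$ as an endomorphism of $\Lambda^r\Bbb{Z}^n$, and that rank is invariant under $-\otimes\Bbb{C}$: by the Smith normal form theorem (Theorem \ref{theo:smith}), for any endomorphism $\varrho$ of a free $\Bbb{Z}$-module one has $\mathrm{rank}\ker\varrho=\dim_{\Bbb{C}}\ker(\varrho\otimes1)$, since both equal the number of zero invariant factors, equivalently the defect of the rank of $\varrho$ from full. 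Hence
$$a_{n,r}=\dim_{\Bbb{C}}\ker\big((\wedge^r\hat\sigma-\mathrm{id})\otimes1\big)=\dim_{\Bbb{C}}\ker\big(\wedge^r(\hat\sigma\otimes1)-\mathrm{id}\big),$$
where the last equality uses that $-\otimes\Bbb{C}$ commutes with taking exterior powers and sends $\mathrm{id}$ to $\mathrm{id}$ (this is the content of the identity $\wedge^r(\hat\phi\circ\hat\psi)=(\wedge^r\hat\phi)\circ(\wedge^r\hat\psi)$ and functoriality recalled in Appendix \ref{endomorphism}). So the whole statement reduces to showing, over $\Bbb{C}$, that
$$\ker\big(\wedge^r(\hat\sigma\otimes1)-\mathrm{id}\big)=\ker\mathcal{D}^{\hspace{1pt}r}\hat\varphi,\qquad\text{where }\hat\varphi=\hat\sigma\otimes1-\mathrm{id}.$$

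Next I would handle the passage from the multiplicative operator $\wedge^r(\hat\sigma\otimes1)$ to the additive derivation $\mathcal{D}^{\hspace{1pt}r}\hat\varphi$. The key point is that $\hat\varphi=\hat\sigma\otimes1-\mathrm{id}$ is nilpotent on $V$ (indeed $\hat\varphi(e_i)=e_{i-1}$, so $\hat\varphi^n=0$), hence $\hat\sigma\otimes1=\mathrm{id}+\hat\varphi$ is unipotent, and therefore $\wedge^r(\hat\sigma\otimes1)$ is unipotent on $\Lambda^rV$ while $\mathcal{D}^{\hspace{1pt}r}\hat\varphi$ is nilpotent on $\Lambda^rV$. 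For a unipotent operator $\mathrm{id}+N$ with $N$ nilpotent, $\ker N=\ker(\mathrm{id}+N-\mathrm{id})$, so it suffices to identify the nilpotent part of $\wedge^r(\hat\sigma\otimes1)$, i.e. to show
$$\wedge^r(\hat\sigma\otimes1)-\mathrm{id}\ \text{ and }\ \mathcal{D}^{\hspace{1pt}r}\hat\varphi\ \text{ have the same kernel on }\Lambda^rV.$$
Concretely, $\wedge^r(\hat\sigma\otimes1)(x_1\wedge\cdots\wedge x_r)=\prod_i(\mathrm{id}+\hat\varphi)(x_i)$ expands, upon subtracting $x_1\wedge\cdots\wedge x_r$, as $\mathcal{D}^{\hspace{1pt}r}\hat\varphi$ plus strictly higher-order terms in $\hat\varphi$; that is, on the filtration of $\Lambda^rV$ by powers of $\hat\varphi$-weight (or simply by the nilpotency filtration), $\wedge^r(\hat\sigma\otimes1)-\mathrm{id}$ and $\mathcal{D}^{\hspace{1pt}r}\hat\varphi$ have the same associated graded. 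A clean way to make this precise and conclude is to use $\mathfrak{sl}(2,\Bbb{C})$-representation theory directly, as the surrounding section indicates: decompose $\Lambda^rV$ into irreducible $\mathfrak{sl}(2,\Bbb{C})$-submodules under the canonical representation; on each irreducible summand $\hat\varphi=\pi(f)$ acts as a single Jordan block (a lowering operator with one-dimensional kernel), and $\hat\sigma\otimes1=\mathrm{id}+\pi(f)$ acts as the corresponding unipotent Jordan block, which likewise has one-dimensional fixed space. Hence on each irreducible summand both $\wedge^r(\hat\sigma\otimes1)-\mathrm{id}$ and $\mathcal{D}^{\hspace{1pt}r}\hat\varphi$ have a one-dimensional kernel (the lowest weight line), and summing over summands gives equality of dimensions, which together with the inclusion that is easy to check (anything killed by $\mathcal{D}^{\hspace{1pt}r}\hat\varphi$ is fixed by the unipotent group it generates) yields the claim.

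Alternatively, and perhaps more transparently, I would avoid irreducibility and argue purely linear-algebraically: since $\hat\varphi$ is nilpotent, so is the derivation $\mathcal{D}^{\hspace{1pt}r}\hat\varphi=:N$, and $\wedge^r(\hat\sigma\otimes1)=\exp$-type expansion... more precisely $\wedge^r(\hat\sigma\otimes1)=\mathrm{id}+N+\tfrac12N'+\cdots$ where each successive term lowers the nilpotency degree, so $\wedge^r(\hat\sigma\otimes1)-\mathrm{id}$ and $N$ differ by an operator that maps into $N(\Lambda^rV)$ modulo deeper terms; comparing filtration by $\ker N^k$ shows $\ker(\wedge^r(\hat\sigma\otimes1)-\mathrm{id})=\ker N$. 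The main obstacle I anticipate is precisely this comparison step — making rigorous that the higher-order corrections do not enlarge or shrink the kernel — and I expect the cleanest route is the $\mathfrak{sl}(2,\Bbb{C})$ reduction above, because on a Jordan block the fixed space of $\mathrm{id}+N$ and the kernel of $N$ visibly coincide, so no delicate estimate is needed. With that in hand the chain of equalities $a_{n,r}=\mathrm{rank}\ker(\wedge^r\hat\sigma-\mathrm{id})=\dim_{\Bbb{C}}\ker(\wedge^r(\hat\sigma\otimes1)-\mathrm{id})=\dim_{\Bbb{C}}\ker\mathcal{D}^{\hspace{1pt}r}\hat\varphi$ is complete.
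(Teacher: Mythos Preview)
Your overall plan is right: pass from $\Bbb Z$ to $\Bbb C$ via the Smith normal form, then compare $\ker\big(\wedge^r(\hat\sigma\otimes1)-\mathrm{id}\big)$ with $\ker\mathcal{D}^{\hspace{1pt}r}\hat\varphi$ on $\Lambda^rV$. The gap is in the comparison step. In your $\mathfrak{sl}(2,\Bbb C)$ argument you write that on each irreducible summand ``$\hat\sigma\otimes1=\mathrm{id}+\pi(f)$ acts as the corresponding unipotent Jordan block.'' But on $\Lambda^rV$ the operator in question is $\wedge^r(\mathrm{id}+\hat\varphi)$, and for $r\ge2$ this is \emph{not} $\mathrm{id}+\mathcal{D}^{\hspace{1pt}r}\hat\varphi$: expanding $(\mathrm{id}+\hat\varphi)x_1\wedge\cdots\wedge(\mathrm{id}+\hat\varphi)x_r$ produces genuine higher-order terms. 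Moreover $\wedge^r(\mathrm{id}+\hat\varphi)$ does not commute with $\pi^r_n(h)$ (since $[\pi_n(h),\hat\varphi]=-2\hat\varphi\neq0$), so there is no reason it should preserve the irreducible $\mathfrak{sl}(2,\Bbb C)$-summands of $\Lambda^rV$. Your claimed ``easy inclusion'' likewise compares $\ker\mathcal{D}^{\hspace{1pt}r}\hat\varphi$ with the fixed space of $\exp(\mathcal{D}^{\hspace{1pt}r}\hat\varphi)$, but $\exp(\mathcal{D}^{\hspace{1pt}r}\hat\varphi)\neq\wedge^r(\mathrm{id}+\hat\varphi)$, so that does not touch the operator you actually need.

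The paper closes exactly this gap, and in one line, by invoking Corollary~\ref{sim}: since $\hat\varphi$ is nilpotent, $\wedge^r(\mathrm{id}+\hat\varphi)-\mathrm{id}$ is \emph{similar} (conjugate) to $\mathcal{D}^{\hspace{1pt}r}\hat\varphi$. The mechanism behind that corollary is the exponential identity $\wedge^r\exp(\hat\phi)=\exp(\mathcal{D}^{\hspace{1pt}r}\hat\phi)$ together with Lemma~\ref{lemm:3} (for nilpotent $\hat\phi$, the map $\exp(\hat\phi)-\mathrm{id}$ is similar to $\hat\phi$), which gives $\mathrm{id}+\hat\varphi\sim\exp(\hat\varphi)$, hence $\wedge^r(\mathrm{id}+\hat\varphi)-\mathrm{id}\sim\wedge^r\exp(\hat\varphi)-\mathrm{id}=\exp(\mathcal{D}^{\hspace{1pt}r}\hat\varphi)-\mathrm{id}\sim\mathcal{D}^{\hspace{1pt}r}\hat\varphi$. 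This is the missing idea in your attempt: rather than trying to match kernels through a filtration or a module decomposition that the multiplicative operator need not respect, one shows outright similarity via $\exp$. With that, your chain of equalities goes through.
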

\begin{proof}
Since $\hat\varphi$ is a nilpotent mapping, we can use Corollary \ref{sim} to conclude that $\wedge^{r} {(\hat\sigma\otimes 1)}-\mathrm{id} 
\sim
\mathcal{D}^{\hspace{1pt}r}{\hat\varphi}$. Therefore
\begin{equation*}
\mathrm{rank}\ker(\wedge^{r}\hat\sigma-\mathrm{id})=\dim\ker(\wedge^{r}
(\hat\sigma\otimes 1)-\mathrm{id})=
\dim\ker\mathcal{D}^{\hspace{1pt}r}{\hat\varphi}.
\end{equation*}
\end{proof}

\begin{nota}\label{nota:3}
Let $n,k,r$ be positive integers. Then $P(n,r,k)$ denotes the number
of partitions of $k$ to $r$ distinct positive integers not greater than $n$. 
In other
words
$$P(n,r,k)=\textnormal{card}\{(i_1,\ldots,i_r)\mid i_1+\ldots+i_r=k, 
1 \leq i_1<\ldots<i_r
\leq n \}.$$
By convention, we set $P(n,0,0)=1$ and $P(n,r,0)=P(n,0,k)=0$ for $r,k \geq 1$.
\end{nota}

\noindent We are ready now to state the main result of this section.

\begin{theo}\label{a_{n,r}}
With the above notation, $a_{n,r}=P(n,r,[r(n+1)/2])$, where $[x]$ 
denotes the greatest integer not greater than $x$. In particular,
$$a_n=\sum_{r=0}^n P(n,r,[\frac{r(n+1)}{2}]).$$
\end{theo}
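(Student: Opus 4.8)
The plan is to compute $\dim\ker\mathcal{D}^{\,r}\hat\varphi$ using the representation theory of $\mathfrak{sl}(2,\mathbb{C})$, since the previous proposition identifies $a_{n,r}$ with this nullity, and then translate the answer into the partition count $P(n,r,[r(n+1)/2])$. First I would recall that $\hat\varphi=\pi_n(f)$ is the lowering operator in the irreducible $n$-dimensional representation $V$ of $\mathfrak{sl}(2,\mathbb{C})$, and that the derivation $\mathcal{D}^{\,r}\hat\varphi$ on $\Lambda^r V$ is precisely $\rho(f)$, where $\rho$ is the representation of $\mathfrak{sl}(2,\mathbb{C})$ on $\Lambda^r V$ obtained by extending $\pi_n$ as a derivation (i.e. $\Lambda^r$ of the $\mathfrak{sl}(2,\mathbb{C})$-module $V$). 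The weight vectors of $\Lambda^r V$ are the wedges $e_{i_1}\wedge\cdots\wedge e_{i_r}$ with $1\le i_1<\cdots<i_r\le n$, and if one normalizes the $\mathfrak{sl}(2,\mathbb{C})$-weights so that $e_i$ has weight $n+1-2i$ (the standard ladder $n-1, n-3,\ldots,-(n-1)$ up to sign), then $e_{i_1}\wedge\cdots\wedge e_{i_r}$ has weight $r(n+1)-2(i_1+\cdots+i_r)$. Hence the dimension of the zero-weight space of $\Lambda^r V$ is exactly the number of tuples $i_1<\cdots<i_r$ with $i_1+\cdots+i_r=r(n+1)/2$, which is $P(n,r,r(n+1)/2)$ when $r(n+1)$ is even, and $0$ otherwise.

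Next I would invoke the structure theory of finite-dimensional $\mathfrak{sl}(2,\mathbb{C})$-modules: any such module is a direct sum of irreducibles, and in an irreducible module the lowering operator $f$ acts injectively except that it kills exactly a one-dimensional space (the lowest weight line); more to the point, $\ker\rho(f)$ in any finite-dimensional module has dimension equal to the number of irreducible summands, which in turn equals $\dim(\text{weight-}0\text{ space}) + \dim(\text{weight-}1\text{ space})$ — and in the present situation the relevant count reduces to the zero-weight space together with a parity correction. The cleanest route is: $f$ maps the weight-$\lambda$ space to the weight-$(\lambda-2)$ space, so $\ker\rho(f)$ is spanned by the lowest vectors of each irreducible summand; counting summands by their highest weights, one gets $\dim\ker\rho(f)=\sum_{\lambda\ge 0}(m_\lambda-m_{\lambda+2})$ where $m_\lambda$ is the multiplicity of weight $\lambda$, and this telescopes. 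For $\Lambda^r V$ the weights are symmetric about $0$ and the spacing is by $2$; if all weights appearing have the same parity as $r(n+1)$, the telescoping sum collapses to $m_0$ when $r(n+1)$ is even and to $m_1$ when it is odd. One then checks that $m_1$ in the odd case again equals $P(n,r,[r(n+1)/2])$ because $[r(n+1)/2]=(r(n+1)-1)/2$ and weight $1$ corresponds to $i_1+\cdots+i_r=(r(n+1)-1)/2$. Thus in both parities $a_{n,r}=\dim\ker\mathcal{D}^{\,r}\hat\varphi = P(n,r,[r(n+1)/2])$.

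Finally, summing over $r=0,1,\ldots,n$ and using Notation \ref{not:a_n}, namely $a_n=\sum_{r=0}^n a_{n,r}$, gives the displayed formula $a_n=\sum_{r=0}^n P(n,r,[r(n+1)/2])$, with the boundary cases $r=0$ and $r=n$ handled by the convention $P(n,0,0)=1$ (matching $a_{n,0}=a_{n,n}=1$, which also follows from $\wedge^0\hat\sigma-\mathrm{id}=\wedge^n\hat\sigma-\mathrm{id}=0$). I expect the main obstacle to be the parity bookkeeping: one must argue carefully that $\dim\ker\rho(f)$ equals the zero-weight multiplicity (or the weight-one multiplicity in the odd case) rather than merely bounding it, which requires knowing that $\Lambda^r V$ decomposes into irreducibles and that each irreducible contributes exactly one dimension to $\ker\rho(f)$ — a standard but essential input — together with verifying that all weights of $\Lambda^r V$ share the parity of $r(n+1)$, so that no cancellation across parities occurs and the telescoping sum indeed collapses to a single multiplicity.
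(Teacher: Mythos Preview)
Your proposal is correct and follows essentially the same route as the paper: identify $\mathcal{D}^{\,r}\hat\varphi$ with $\rho(f)$ for the $\mathfrak{sl}(2,\mathbb{C})$-module $\Lambda^r V$, use complete reducibility to equate $\dim\ker\rho(f)$ with the number of irreducible summands, invoke the standard fact that this number is $\dim E_0+\dim E_1$, and then read off these weight multiplicities from the wedge basis together with the parity observation that only one of $E_0,E_1$ is nonzero. The only cosmetic differences are your sign convention for the weights (harmless, as you note) and your inclusion of the telescoping derivation of $\dim\ker\rho(f)=m_0+m_1$, which the paper simply cites as part of its Theorem~\ref{rep}(iii).
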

\begin{proof}
Let $\pi_n:\mathfrak{sl}(2,\mathbb{C})\to\mathfrak{gl}(V)$ be the canonical representation of 
the Lie algebra $\mathfrak{sl}(2,\mathbb{C})$ on the $n$-dimensional complex vector space $V$, and extend $\pi_n$
to $\pi^r_n:\mathfrak{sl}(2,\mathbb{C})\rightarrow \mathfrak{gl}(\Lambda^r V)$ 
with $\pi^1_n=\pi_n$. More precisely, for every $X \in \mathfrak{sl}(2,\mathbb{C})$
define
$$\pi^r_n(X)(v_1 \wedge \ldots\wedge v_r)=(\pi_n(X) v_1)\wedge v_2 
\wedge\ldots\wedge v_r
+\ldots +v_1 \wedge\ldots\wedge v_{r-1}\wedge (\pi_n(X)v_r).$$
This means that we have $\mathcal{D}^{\hspace{1pt}r}{\hat\varphi}=\pi^r_n(f).$
In particular,  $a_{n,r}$ is the nullity of $\pi^r_n(f)$ by the previous proposition.
Following Weyl's theorem (see Theorem \ref{Weyl}), since  the Lie algebra $\mathfrak{sl}(2,\mathbb{C})$ is semisimple the representation $\pi^r_n$ has to be completely reducible. This means we should have a decomposition $\Lambda^r V=
\oplus_{p=1}^N W_p$, where $W_p$'s are some $\pi^r_n$-invariant irreducible 
subspaces of $\Lambda^r V$. Moreover, the number $N$ of such subspaces is equal to $\dim E_0+\dim E_1$, where
$$E_j=\{v \in \Lambda^r V \mid \pi^r_n(h)\,v=j\,v\},\hspace{.3in} (j=0,1)$$ 
and $h$ is the first basis element of $\mathfrak{sl}(2,\mathbb{C})$ as in Appendix \ref{sl_2} (see Theorem \ref{rep}).
On the other hand, the number $N$ is equal to the nullity of $\pi^n_r(f)$. In fact, since $\pi^n_r|_{W_p}$
is an irreducible representation of $\mathfrak{sl}(2,\mathbb{C})$ on $W_p$, it is equivalent to the canonical 
representation of $\mathfrak{sl}(2,\mathbb{C})$ on $W_p$ by Theorem \ref{rep}. But the image of $f$ in the canonical representation has a $1$-dimensional kernel due to the part (c) of Proposition \ref{canonical}. So the nullity of 
$\pi^n_r(f)$ counts the number of $W_p$'s. Therefore $$a_{n,r}=\dim\ker\pi^r_n(f)=\dim E_0+\dim E_1.$$
To compute the last two terms, note that using Proposition \ref{canonical} we have $\pi_n(h)e_i=(2i-n-1)e_i$, which leads to
$$\pi^r_n(h)(e_{i_1}\wedge\ldots\wedge e_{i_r})=(2(i_1+\ldots+i_r)-r(n+1))
e_{i_1}\wedge\ldots\wedge e_{i_r}.$$
So for even $r(n+1)$ we have $E_1=\{0\}$ and $\dim E_0=P(n,r,r(n+1)/2)$, and for odd
$r(n+1)$ we have $E_0=\{0\}$ and $\dim E_1=P(n,r,r(n+1)/2-1)$. To summarize, we have established the following 
equalities
$$a_{n,r}=\dim \ker\mathcal{D}^{\hspace{1pt}r}{\hat\varphi}=\dim\ker\pi^r_n(f)=N=\dim E_0+\dim E_1 
=P(n,r,[r(n+1)/2]).$$
The desired formula for $a_n$ is immediate now by writing $a_n=\sum_{r=0}^n a_{n,r}$. 
\end{proof}
~\\
\noindent Using the previous theorem, we can prove that $\{a_n \}$ is a strictly increasing sequence.
We need a lemma first.
\begin{lemm}
$P(n+1,r,k+s)\geq P(n,r,k)$ for $s=0,1,\ldots,r$.
\end{lemm}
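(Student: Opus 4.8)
The plan is to establish the inequality $P(n+1,r,k+s)\geq P(n,r,k)$ for $0\le s\le r$ by exhibiting an explicit injection from the set counted by $P(n,r,k)$ into the set counted by $P(n+1,r,k+s)$. Recall that $P(n,r,k)$ counts tuples $(i_1,\ldots,i_r)$ with $1\le i_1<\cdots<i_r\le n$ and $\sum_j i_j=k$. Given such a tuple, the idea is to add $1$ to each of the top $s$ coordinates, i.e.\ to send $(i_1,\ldots,i_{r-s},i_{r-s+1},\ldots,i_r)$ to $(i_1,\ldots,i_{r-s},i_{r-s+1}+1,\ldots,i_r+1)$. This increases the sum by exactly $s$, so the image has sum $k+s$; it keeps the coordinates strictly increasing, since shifting a strictly increasing tail uniformly upward preserves that property and the single junction $i_{r-s}<i_{r-s+1}<i_{r-s+1}+1$ is still satisfied; and since $i_r\le n$ we get $i_r+1\le n+1$, so the image lands in the set counted by $P(n+1,r,k+s)$.

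The key remaining step is injectivity of this map. I would argue that the original tuple can be recovered from its image: from a tuple $(j_1,\ldots,j_r)$ in the image one reads off $j_1,\ldots,j_{r-s}$ unchanged and recovers $i_{r-s+1},\ldots,i_r$ as $j_{r-s+1}-1,\ldots,j_r-1$. Here $s$ is a fixed parameter of the construction (not something to be inferred from the data), so this recovery is unambiguous and the map is injective. Since an injection between finite sets forces $|\text{source}|\le|\text{target}|$, we obtain $P(n,r,k)\le P(n+1,r,k+s)$, which is the claim. The boundary cases $s=0$ (the identity inclusion of tuples bounded by $n$ into those bounded by $n+1$) and $r=0$ (both sides are $0$ or $1$ by the convention in Notation \ref{nota:3}) are handled trivially and should be mentioned for completeness.

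I expect the main obstacle to be purely bookkeeping: verifying carefully that the shifted tuple is still strictly increasing when $s=r$ (so that \emph{all} coordinates are shifted and there is no junction to check) versus $0<s<r$ (one junction $i_{r-s}<i_{r-s+1}+1$, which holds a fortiori), and confirming the bound $i_r+1\le n+1$ uses only $i_r\le n$. There is no genuine difficulty here, so I would keep the write-up short; an alternative phrasing avoiding casework is to note that the map $(i_1,\ldots,i_r)\mapsto(i_1,\ldots,i_{r-s},i_{r-s+1}+1,\ldots,i_r+1)$ is, on the level of subsets of $\{1,\ldots,n+1\}$ of size $r$, simply ``translate the largest $s$ elements up by one,'' whose inverse on its image is evident.
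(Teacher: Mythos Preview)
Your proposal is correct and follows essentially the same approach as the paper's proof: both construct the injection that fixes the first $r-s$ coordinates and shifts the top $s$ coordinates up by one. Your write-up is more explicit about injectivity and the boundary cases, but the underlying map is identical.
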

\begin{proof}
For $s=0$, the proof is clear. Now, let $1 \leq s \leq r$ and suppose that
$(j_1,\ldots,j_r)$ is a partition of $k$ such that $1 \leq j_1 <\ldots<j_r 
\leq n$. Define $i_q:=j_q$ for $1 \leq q \leq r-s$ and  $i_q:=j_q+1$
for $r-s+1 \leq q \leq r$. Then $(i_1,\ldots,i_r)$ is a partition of $k+s$ 
and
$1 \leq i_1<\ldots<i_r \leq n+1$. Thus $P(n+1,r,k+s)\geq P(n,r,k)$.
\end{proof}

\begin{prop}\label{prop:2}
$\{a_n \}$ is a strictly increasing sequence.
\end{prop}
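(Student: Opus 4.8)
The plan is to prove the slightly stronger statement $a_{n+1}\ge a_n+1$ by comparing, term by term, the two sums supplied by Theorem \ref{a_{n,r}}: $a_n=\sum_{r=0}^{n}P(n,r,[r(n+1)/2])$ and $a_{n+1}=\sum_{r=0}^{n+1}P(n+1,r,[r(n+2)/2])$. Write $k_r(m):=[r(m+1)/2]$, so that the $r$-th summand of $a_m$ is exactly $P(m,r,k_r(m))=a_{m,r}$. The crux is to show that for each $r=0,1,\dots,n$ the shift $s_r:=k_r(n+1)-k_r(n)$ satisfies $0\le s_r\le r$; granting this, the Lemma immediately preceding this proposition applies with $k=k_r(n)$ and $s=s_r$ to give $P(n+1,r,k_r(n+1))=P(n+1,r,k_r(n)+s_r)\ge P(n,r,k_r(n))=a_{n,r}$ for every $r=0,\dots,n$.

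First I would carry out the elementary computation of $s_r=[r(n+2)/2]-[r(n+1)/2]$ by splitting into cases on the parities of $r$ and $n$. When $r=2t$ is even one finds $s_r=t=r/2$, independently of $n$; when $r$ is odd one finds $s_r=(r+1)/2$ if $n$ is even and $s_r=(r-1)/2$ if $n$ is odd. In every case $0\le s_r\le r$, so the hypothesis of the Lemma is met. This parity bookkeeping is the only genuinely computational ingredient, and I do not expect it to be an obstacle; the one place to be a little careful is the correct handling of the floor functions in the odd-$r$ subcases. If there is a "main point" to the argument at all, it is the mild sharpening to $a_{n+1}\ge a_n+1$, which is what produces strictness rather than mere monotonicity.

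Summing the inequalities over $r=0,\dots,n$ and adjoining the extra top-degree term of $a_{n+1}$ then yields $a_{n+1}=\sum_{r=0}^{n+1}P(n+1,r,k_r(n+1))\ge\Bigl(\sum_{r=0}^{n}a_{n,r}\Bigr)+P(n+1,n+1,k_{n+1}(n+1))=a_n+P\bigl(n+1,n+1,(n+1)(n+2)/2\bigr)$, where in the last step I use that $(n+1)(n+2)$ is even, so the floor in $k_{n+1}(n+1)$ is vacuous. Finally, $P(n+1,n+1,(n+1)(n+2)/2)=1$: the only way to write an integer as a sum of $n+1$ distinct positive integers each at most $n+1$ is $1+2+\dots+(n+1)$, whose value is precisely $(n+1)(n+2)/2$. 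Hence $a_{n+1}\ge a_n+1>a_n$, which is exactly the assertion, and strict monotonicity falls out for free from this isolated top-degree contribution.
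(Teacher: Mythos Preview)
Your proposal is correct and follows essentially the same approach as the paper: both invoke Theorem~\ref{a_{n,r}} and the preceding Lemma, verify that the shift in the third argument of $P$ lies in $\{0,\dots,r\}$ via a parity analysis, and extract strictness from the top-degree term $a_{m,m}=1$. The only difference is organizational---the paper treats the comparisons $a_{2m+1}>a_{2m}$ and $a_{2m}>a_{2m-1}$ separately, whereas you handle $a_{n+1}>a_n$ uniformly; your packaging is a bit cleaner but the content is the same.
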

\begin{proof}
First, note that $a_{n,0}=a_{n,n}=P(n,0,0)=P(n,n,n(n+1)/2)=1$, and from the 
previous theorem we have $a_n=\sum_{r=0}^n P(n,r,[r(n+1)/2])$. Fix $m\in\Bbb N$, and get
\begin{align*}
a_{2m+1}&=1+\sum_{r=0}^m P(2m+1,2r,2rm+2r)+\sum_{r=0}^{m-1} P(2m+1,2r+1,2rm+2r+m+1),
\end{align*}
\begin{align*}
a_{2m}&=\sum_{r=0}^m P(2m,2r,2rm+r)+\sum_{r=0}^{m-1} P(2m,2r+1,2rm+m+r)\\
&=1+\sum_{r=0}^{m-1} P(2m,2r,2rm+r)+\sum_{r=0}^{m-1} P(2m,2r+1,2rm+m+r),
\end{align*}
\begin{align*}
a_{2m-1}&=\sum_{r=0}^{m-1} P(2m-1,2r,2rm)+\sum_{r=0}^{m-1} 
P(2m-1,2r+1,2rm+m).
\end{align*}
Applying the previous lemma to the terms of the sums expressed above implies that $$a_{2m+1}>a_{2m}>a_{2m-1}.$$
\end{proof}

\subsection{Generating functions for the sequence $\{a_n\}$}
In this part, we express the rank of the $K$-groups of 
$\mathscr{A}_{n,\theta}$ as explicitly as possible. In fact, 
we present them as the constant terms in the polynomial 
expansions of certain functions. First of all, we need the 
following basic lemma.

\begin{lemm}
Let $P(n,r,k)$ denote the number of partitions of $k$ to $r$ distinct 
positive integers not greater than $n$. Then $P(n,r,k)$ is the coefficient of $u^r t^k$
in the polynomial expansion of $F_n(u,t):=\prod_{i=1}^n (1+u t^i)$. In other 
words, $$\sum_{r,k \geq 0}P(n,r,k)u^r t^k=\prod_{i=1}^n (1+u t^i).$$
\end{lemm}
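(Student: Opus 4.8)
The plan is to expand the product $F_n(u,t)=\prod_{i=1}^n (1+ut^i)$ directly and identify the coefficient of $u^r t^k$ combinatorially. When one multiplies out the $n$ binomial factors, a term in the expansion is obtained by choosing, for each index $i\in\{1,\ldots,n\}$, either the summand $1$ or the summand $ut^i$. Choosing $ut^i$ for exactly the indices in a subset $I=\{i_1,\ldots,i_r\}\subseteq\{1,\ldots,n\}$ (and $1$ for the rest) contributes the monomial $u^{|I|}\,t^{\sum_{j}i_j}=u^r t^{i_1+\cdots+i_r}$. Summing over all subsets gives
$$\prod_{i=1}^n (1+ut^i)=\sum_{I\subseteq\{1,\ldots,n\}} u^{|I|}\,t^{\sum_{i\in I}i}.$$
Collecting terms, the coefficient of $u^r t^k$ counts exactly the subsets $I$ of $\{1,\ldots,n\}$ with $|I|=r$ and $\sum_{i\in I} i=k$; writing such an $I$ as $\{i_1<\cdots<i_r\}$, this is precisely $\mathrm{card}\{(i_1,\ldots,i_r)\mid i_1+\cdots+i_r=k,\ 1\le i_1<\cdots<i_r\le n\}=P(n,r,k)$ by Notation \ref{nota:3}.

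In more detail, I would first record the single-factor identity $1+ut^i=\sum_{\varepsilon_i\in\{0,1\}}(ut^i)^{\varepsilon_i}$, then take the product over $i$ and use distributivity to write
$$\prod_{i=1}^n(1+ut^i)=\sum_{(\varepsilon_1,\ldots,\varepsilon_n)\in\{0,1\}^n}\prod_{i=1}^n (ut^i)^{\varepsilon_i}=\sum_{(\varepsilon_1,\ldots,\varepsilon_n)\in\{0,1\}^n}u^{\sum_i\varepsilon_i}\,t^{\sum_i i\varepsilon_i}.$$
The bijection between tuples $(\varepsilon_1,\ldots,\varepsilon_n)\in\{0,1\}^n$ and subsets $I=\{i:\varepsilon_i=1\}$ of $\{1,\ldots,n\}$ then turns this into the subset-sum over $I$ displayed above, and extracting the coefficient of $u^r t^k$ finishes the proof. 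The degenerate conventions $P(n,0,0)=1$ and $P(n,r,0)=P(n,0,k)=0$ for $r,k\ge 1$ are consistent with this: the empty subset contributes the constant term $1$, there is no nonempty subset of positive integers summing to $0$, and no subset of size $r\ge1$ has sum $0$.

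This statement is essentially a bookkeeping identity, so I do not anticipate a genuine obstacle; the only point requiring a little care is making the subset/tuple correspondence precise and noting that the exponents $\sum_{i\in I}i$ are nonnegative integers so that the right-hand side is genuinely a polynomial in $u$ and $t$ whose coefficients are the $P(n,r,k)$. If one wanted to be maximally terse, one could instead argue by induction on $n$, using $F_n(u,t)=F_{n-1}(u,t)(1+ut^n)$ and the evident recursion $P(n,r,k)=P(n-1,r,k)+P(n-1,r-1,k-n)$ for the number of such partitions (according to whether $n$ is used as a part), but the direct expansion is cleaner and matches the ``in other words'' phrasing of the lemma.
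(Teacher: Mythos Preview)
Your proof is correct and follows essentially the same approach as the paper: both expand the product directly, observing that each term corresponds to a choice of a subset $\{i_1<\cdots<i_r\}\subseteq\{1,\ldots,n\}$ contributing $u^r t^{i_1+\cdots+i_r}$, and then read off the coefficient of $u^r t^k$ as $P(n,r,k)$. Your write-up is simply more detailed (the $\varepsilon_i$ formulation and the remark on the inductive alternative), but the argument is the same.
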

\begin{proof}
\begin{align*}
\prod_{i=1}^n (1+u t^i)&=1+\sum_{r=1}^n
\underset{1 \leq i_1<\ldots<i_r \leq n}
{\sum_{\small{(i_1,\ldots,i_r)}}}
{(ut^{i_1})\ldots (ut^{i_r})}
=1+\sum_{r=1}^n \sum_{k \geq 1}P(n,r,k)u^r t^k
=\sum_{r,k \geq 0}P(n,r,k)u^r t^k.
\end{align*}
\end{proof}

\noindent Now, we have the following result for the rank $a_n$ of the $K$-groups
of $\mathscr{A}_{n,\theta}$.

\begin{theo}\label{theo:6}
Let $a_n=\mathrm{rank}\hspace{2pt}K_0(\mathscr{A}_{n,\theta})=
\mathrm{rank}\hspace{2pt}K_1(\mathscr{A}_{n,\theta})$. Then for a nonnegative 
integer $m$ we have\\

\begin{itemize}
\item[(i)] $a_{2m+1}$ is the constant term in the Laurent polynomial expansion of
$$\prod_{j=-m}^m (1+z^j),$$
\item[(ii)] $a_{2m}$ is the constant term in the Laurent polynomial expansion of
$$(1+z)\prod_{j=-m+1}^m (1+z^{2j-1}).$$
\end{itemize}
\end{theo}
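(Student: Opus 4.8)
The plan is to read off $a_n$ directly from the generating function $F_n(u,t)=\prod_{i=1}^n(1+ut^i)$ established in the preceding lemma. By Theorem \ref{a_{n,r}}, $a_{n,r}=P(n,r,[r(n+1)/2])$, and $P(n,r,k)$ is the coefficient of $u^r t^k$ in $F_n(u,t)$. The key observation is that the exponent $[r(n+1)/2]$ is exactly half of $r(n+1)$ when this is even, and $(r(n+1)-1)/2$ when odd; in either case, the quantity $2k-r(n+1)$ that appeared in the proof of Theorem \ref{a_{n,r}} (via $\pi^r_n(h)$) vanishes or equals $-1$. So the natural device is to substitute a monomial in a single variable $z$ into $F_n$ in such a way that the term $u^r t^k$ gets weighted by $z$ raised to a power proportional to $2k-r(n+1)$, and then collect the coefficient of $z^0$ (the ``constant term'').

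First I would treat the odd case $n=2m+1$. Here $n+1=2m+2$ is even, so $r(n+1)/2=r(m+1)$ is always an integer and $[r(n+1)/2]=r(m+1)$; thus $a_{2m+1,r}=P(2m+1,r,r(m+1))$. In $F_{2m+1}(u,t)=\prod_{i=1}^{2m+1}(1+ut^i)$, I would set $u=1$ and substitute $t^i\mapsto z^{i-(m+1)}$; equivalently, replace the index $i$ running over $1,\dots,2m+1$ by $j=i-(m+1)$ running over $-m,\dots,m$, giving $\prod_{j=-m}^{m}(1+z^j)$. A generic monomial coming from choosing $r$ of the factors, say with indices $i_1<\dots<i_r$, contributes $z^{(i_1+\dots+i_r)-r(m+1)}$, which is $z^0$ precisely when $i_1+\dots+i_r=r(m+1)$. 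Hence the constant term of $\prod_{j=-m}^{m}(1+z^j)$ is $\sum_{r=0}^{2m+1}P(2m+1,r,r(m+1))=\sum_r a_{2m+1,r}=a_{2m+1}$, which is (i). (One should note $j=0$ occurs, so the factor $(1+z^0)=2$ appears; this is harmless and correctly accounts for the freedom of including or excluding the middle index.)

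For the even case $n=2m$, we have $n+1=2m+1$ odd, so $r(n+1)=r(2m+1)$ has the parity of $r$, and $[r(n+1)/2]=(r(2m+1)-\varepsilon_r)/2$ where $\varepsilon_r\in\{0,1\}$ is $0$ for even $r$ and $1$ for odd $r$. Accordingly $a_{2m,r}=P(2m,r,(r(2m+1)-\varepsilon_r)/2)$. To make the shift symmetric I would like to subtract $(n+1)/2=m+\tfrac12$ from each index, which is not an integer; the standard fix is to double everything. I would instead substitute $t^i\mapsto z^{2i-(2m+1)}$ in $F_{2m}(u,t)$ with $u=1$, so the index $2i-(2m+1)$ runs over the odd integers $-(2m-1),\dots,2m-1$, i.e.\ over $2j-1$ for $j=-m+1,\dots,m$; this gives $\prod_{j=-m+1}^{m}(1+z^{2j-1})$. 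A choice of $r$ indices $i_1<\dots<i_r$ now contributes $z^{2(i_1+\dots+i_r)-r(2m+1)}$, whose exponent is $-\varepsilon_r$ when $i_1+\dots+i_r=[r(n+1)/2]$ and the relevant partition exists. So the exponent is $0$ exactly for the even-$r$ contributions and $-1$ for the odd-$r$ contributions: the constant term of $\prod_{j=-m+1}^{m}(1+z^{2j-1})$ collects only $\sum_{r\text{ even}}a_{2m,r}$. To also capture the odd-$r$ terms, whose contributions sit in degree $z^{-1}$, I would multiply by $(1+z)$: the extra $z^1$ from the factor $(1+z)$ lifts the $z^{-1}$ pieces to $z^0$, while the $1$ from $(1+z)$ keeps the even-$r$ pieces in $z^0$; no other cross terms land in degree $0$ because all exponents that arise have the same parity as $-r$ and differ from $0$ or $-1$ by an even amount away from the extreme partition. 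Taking the constant term of $(1+z)\prod_{j=-m+1}^{m}(1+z^{2j-1})$ therefore yields $\sum_{r=0}^{2m}a_{2m,r}=a_{2m}$, which is (ii).

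The routine parts are the index bookkeeping and the parity checks; the one place to be careful — and what I expect to be the only genuine subtlety — is verifying in the even case that the factor $(1+z)$ does not introduce spurious degree-zero contributions from non-extremal partitions. This needs the remark that for fixed $r$ the exponents $2(i_1+\dots+i_r)-r(2m+1)$ all lie in a fixed residue class mod $2$ (namely $\equiv r\pmod 2$) and range over an interval; multiplying by $(1+z)$ can only move degree $-1$ and degree $0$ terms into degree $0$, and these are exactly the $\varepsilon_r=1$ and $\varepsilon_r=0$ extremal coefficients. Once this is pinned down, both (i) and (ii) follow immediately from the lemma and Theorem \ref{a_{n,r}}.
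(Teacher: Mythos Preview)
Your argument is correct. Part (i) is identical to the paper's proof: both make the substitution $u=t^{-(m+1)}$ in $F_{2m+1}(u,t)$ (equivalently, reindex $i\mapsto j=i-(m+1)$) and read off the constant term.

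For part (ii) you take a genuinely simpler route than the paper. The paper separates the sum $a_{2m}=A_m+B_m$ into even-$r$ and odd-$r$ pieces using the identity $\tfrac12\bigl[\prod_i(1+ut^i)\pm\prod_i(1-ut^i)\bigr]$, then performs substitutions involving half-integer exponents $t^{i-(m+1/2)}$, obtains separate generating expressions for $A_m$ and $B_m$, and finally recombines them into $(1+z)\prod_{j=-m+1}^{m}(1+z^{2j-1})$. You bypass all of this by doubling the exponents from the start (substituting $t^i\mapsto z^{2i-(2m+1)}$, $u\mapsto 1$), so that every exponent in $\prod_{j=-m+1}^m(1+z^{2j-1})$ is odd and the parity of a resulting monomial $z^{2k-r(2m+1)}$ equals the parity of $r$; hence the coefficient of $z^0$ is exactly $\sum_{r\text{ even}}a_{2m,r}$ and the coefficient of $z^{-1}$ is exactly $\sum_{r\text{ odd}}a_{2m,r}$, with no other $k$ possible in either degree. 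Multiplying by $(1+z)$ then visibly adds these two coefficients in the constant term, and nothing else. Your worry about ``spurious cross terms'' is thus fully resolved by this parity observation alone; the phrase about ``differ from $0$ or $-1$ by an even amount away from the extreme partition'' is unnecessary and a bit obscure---the clean statement is simply that for fixed $r$ the map $k\mapsto 2k-r(2m+1)$ is injective, so each degree is hit by at most one $k$. What your approach buys is a shorter, more transparent derivation; what the paper's approach makes explicit is the separate even/odd decomposition $A_m,B_m$, which is not needed here but could be of independent interest.
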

\begin{proof}
We know that $a_n=\sum _{r=0}^n a_{n,r}$ and $a_{n,r}=P(n,r,[r(n+1)/2])$ by Theorem \ref{a_{n,r}}.
We have $a_{2m+1}=\sum_{r=0}^{2m+1}P(2m+1,r,r(m+1)).$
Now, take $y=u t^{m+1}$ and use the preceding lemma to get
$$F_{2m+1}(u,t)=F_{2 m+1}(y t^{-m-1},t)=\prod_{i=1}^{2m+1}(1+y t^{i-m-1})=
\sum_{r,k \geq 0}P(2m+1,r,k)y^r t^{k-r(m+1)}.$$
In particular, we get the following identity for $y=1$
$$\prod_{i=1}^{2m+1}(1+t^{i-m-1})=
\sum_{r,k \geq 0}P(2m+1,r,k)t^{k-r(m+1)},$$
or equivalently, by setting $z=t$ and $j=i-m-1$ we have
$$\prod_{j=-m}^{m}(1+z^{j})=
\sum_{r,k \geq 0}P(2m+1,r,k)z^{k-r(m+1)}.$$
In particular, the constant term in the Laurent polynomial expansion of $\prod_{j=-m}^{m}(1+z^{j})$
is obtained when we take the sum of those terms for which $k=r(m+1)$ holds, namely
$$\sum_{r=0}^{2m+1}P(2m+1,r,r(m+1)),$$
which is precisely the expression for $a_{2m+1}$.\\

\noindent For part (ii), write 
\begin{align*}
a_{2m}&=\sum_{r=0}^{2m}P(2m,r,[r(m+\frac{1}{2})])
          =\sum_{r=0}^m P(2m,2r,r(2m+1))+\sum_{r=0}^{m-1} 
P(2m,2r+1,2rm+m+r)\\
		&  =:A_m + B_m.
\end{align*}
Let us determine $A_m$ first. Note that using the preceding lemma we have
\begin{align*}
\frac{1}{2}\{\prod_{i=1}^{2m}(1+ut^i)+\prod_{i=1}^{2m}(1-ut^i)\}&=
\sum_{r,k \geq 0}P(2m,r,k)\{\frac{1+(-1)^r}{2}\}u^r t^k
=\sum_{r,k \geq 0}P(2m,2r,k)u^{2r} t^k.
\end{align*}
If we define $y:=u^2 t^{2m+1}$, we have the following identity
$$
\frac{1}{2}
\{\prod_{i=1}^{2m}(1+y^{\frac{1}{2}}t^{i-(m+\frac{1}{2})})
+\prod_{i=1}^{2m}(1-y^{\frac{1}{2}}t^{i-(m+\frac{1}{2})})\}=
\sum_{r,k \geq 0}P(2m,2r,k)y^r t^{k-r(2m+1)},
$$
which for $y=1$ yields
$$\frac{1}{2}\{\prod_{i=1}^{2m}(1+t^{i-(m+\frac{1}{2})})
+\prod_{i=1}^{2m}(1-t^{i-(m+\frac{1}{2})})\}=
\sum_{r,k \geq 0}P(2m,2r,k)t^{k-r(2m+1)}.$$
Hence $A_m$ is the constant term in the polynomial expansion of
$$\frac{1}{2}\{\prod_{i=1}^{2m}(1+t^{i-(m+\frac{1}{2})})
+\prod_{i=1}^{2m}(1-t^{i-(m+\frac{1}{2})})\}.$$
Similarly, for $B_m$ we have
\begin{align*}
\frac{1}{2}\{\prod_{i=1}^{2m}(1+ut^i)-\prod_{i=1}^{2m}(1-ut^i)\}&=
\sum_{r,k \geq 0}P(2m,r,k)\{\frac{1-(-1)^r}{2}\}u^r t^k
=\sum_{r,k \geq 0}P(2m,2r+1,k)u^{2r+1} t^k.
\end{align*}
If we define $y^2:=u^2 t^{2m+1}$, we have the following identities
\begin{multline*}
\frac{1}{2}\{\prod_{i=1}^{2m}(1+y^{\frac{1}{2}}t^{i-(m+\frac{1}{2})})
-\prod_{i=1}^{2m}(1-y^{\frac{1}{2}}t^{i-(m+\frac{1}{2})})\}\\
=\sum_{r,k \geq 0}P(2m,2r+1,k)y^{2r+1} t^{k-(2rm+r+m)-\frac{1}{2}}\\
=t^{-\frac{1}{2}}\sum_{r,k \geq 0}P(2m,2r+1,k)y^{2r+1} t^{k-(2rm+r+m)},
\end{multline*}
which for $y=1$ yields
$$
\frac{t^{\frac{1}{2}}}{2}\{\prod_{i=1}^{2m}(1+t^{i-(m+\frac{1}{2})})
-\prod_{i=1}^{2m}(1-t^{i-(m+\frac{1}{2})})\}=\\
\sum_{r,k \geq 0}P(2m,2r+1,k)t^{k-(2rm+r+m)}.
$$
Hence $B_m$ is the constant term in the polynomial expansion of
$$\frac{t^{\frac{1}{2}}}{2}\{\prod_{i=1}^{2m}(1+t^{i-(m+\frac{1}{2})})
-\prod_{i=1}^{2m}(1-t^{i-(m+\frac{1}{2})})\}.$$
Therefore $a_{2m}=A_m+B_m$ is the constant term in the polynomial
expansion of
$$
\frac{1}{2}\{\prod_{i=1}^{2m}(1+t^{i-(m+\frac{1}{2})})
+\prod_{i=1}^{2m}(1-t^{i-(m+\frac{1}{2})})\\+\frac{t^{\frac{1}{2}}}{2}
\prod_{i=1}^{2m}(1+t^{i-(m+\frac{1}{2})})-\frac{t^{\frac{1}{2}}}{2}
\prod_{i=1}^{2m}(1-t^{i-(m+\frac{1}{2})})\},
$$
or equivalently, the constant term in the polynomial
expansion of
$$
\frac{1}{2}\{(1+z)\prod_{i=1}^{2m}(1+z^{2i-(2m+1)})
+(1-z)\prod_{i=1}^{2m}(1-z^{2i-(2m+1)})\},
$$
which equals the constant term in the Laurent polynomial
expansion of
$$(1+z)\prod_{j=-m+1}^{m}(1+z^{2j-1}).$$

\end{proof}
\noindent Thanks to this theorem, one can compute $a_n$ for large values of $n$ using a computer algebra program.
Many more terms are also available online at OEIS (The Online Encyclopedia of Integer Sequences at 
\texttt{www.oeis.org}). Moreover, as the following corollaries suggest, such recognitions as constant terms of certain Laurent polynomials opens the door to finding even more interesting combinatorial properties of the sequence $\{a_n\}$, which have been of interest to Erd\H{o}s, J. H. van Lint and R. C. Entringer to name a few (\textit{cf}. \cite{pE65,jhVL67,rcE68}).

\begin{coro}
Let $n$ be a nonnegative integer. 
\begin{itemize}
\item[(i)]
The integer $a_{2n+1}$ is the number of solutions of the equation
$$\sum_{k=-n}^{k=n} k\, \epsilon_k=0,$$ where $\epsilon_k=0$ or $1$ for $-n\le k\le n$. In other words,
$a_{2n+1}$ is the number of ways that a sum of integers between $-n$ and $n$ (with no repetitions) equals to $0$. 
\item[(ii)]
The integer $a_{2n}$ is the number of solutions of the equation
$$\sum_{k=-n+1}^{k=n} (2k-1)\, \epsilon_k=0~ {\text or}~1,$$
where $\epsilon_k=0$ or $1$ for $-n+1\le k\le n$. In other words,
$a_{2n}$ is the number of ways that a sum of half-integers between $-n+1/2$ and $n-1/2$ (with no repetitions) equals to $0$ or $1/2$. 
\end{itemize}
\end{coro}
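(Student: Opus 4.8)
The plan is to derive both statements directly from Theorem~\ref{theo:6} by unwinding what ``constant term in a Laurent polynomial'' means combinatorially. For part (i), recall that $a_{2n+1}$ is the constant term in the Laurent polynomial expansion of $\prod_{j=-n}^{n}(1+z^{j})$. Expanding the product, each factor contributes either $1$ (choosing the ``$1$'' summand) or $z^{j}$ (choosing the ``$z^{j}$'' summand); recording these choices by $\epsilon_j\in\{0,1\}$, the coefficient of $z^{0}$ counts exactly those selections $(\epsilon_{-n},\ldots,\epsilon_n)$ for which $\sum_{j=-n}^{n} j\,\epsilon_j=0$. Since the factor for $j=0$ is $(1+z^{0})=2$, one should note that $\epsilon_0$ ranges freely over $\{0,1\}$ and contributes $0$ to the exponent either way, which is consistent with the stated count (the equation $\sum_{k=-n}^{n}k\,\epsilon_k=0$ places no constraint on $\epsilon_0$). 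This immediately yields the claimed description of $a_{2n+1}$ as the number of ways a sum of distinct integers between $-n$ and $n$ (allowing the empty sum) equals $0$.

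For part (ii), I would argue the same way starting from the formula $a_{2n}=\text{const.\ term of }(1+z)\prod_{j=-n+1}^{n}(1+z^{2j-1})$. As $j$ runs over $\{-n+1,\ldots,n\}$, the exponents $2j-1$ run over the odd integers from $-2n+1$ to $2n-1$; choosing $\epsilon_j\in\{0,1\}$ in the $j$-th factor and $\epsilon\in\{0,1\}$ in the leading factor $(1+z)$, the constant term counts the selections with $\epsilon+\sum_{j=-n+1}^{n}(2j-1)\,\epsilon_j=0$, i.e.\ $\sum_{j}(2j-1)\epsilon_j\in\{-1,0\}$. The asymmetry between $0$ and $1$ (rather than $0$ and $-1$) in the statement is cosmetic: the map $\epsilon_j\mapsto\epsilon_{1-j}'$ (replacing each chosen odd number $2j-1$ by its negative $-(2j-1)=2(1-j)-1$, which is again in the allowed range) is an involution on selections that negates the sum, so the number of selections summing to $-1$ equals the number summing to $1$. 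Hence $a_{2n}$ equals the number of selections with $\sum_j(2j-1)\epsilon_j\in\{0,1\}$; dividing the odd numbers by $2$ rephrases this as sums of distinct half-integers between $-n+1/2$ and $n-1/2$ equalling $0$ or $1/2$.

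I do not expect a genuine obstacle here — the corollary is essentially a reading of the generating-function identity already proved in Theorem~\ref{theo:6}. The only point requiring a touch of care is the bookkeeping in part (ii): making sure the index range $j\in\{-n+1,\ldots,n\}$ really does produce each odd residue in $\{-2n+1,-2n+3,\ldots,2n-1\}$ exactly once, and verifying that the negation involution used to pass from ``sum $=-1$'' to ``sum $=1$'' is well defined on the set of admissible $(\epsilon,\epsilon_{-n+1},\ldots,\epsilon_n)$. Both are routine once the ranges are written out explicitly, so the proof is short.
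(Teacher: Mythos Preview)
Your proposal is correct and follows essentially the same route as the paper: both derive the corollary by reading off the combinatorial meaning of the constant term in the Laurent polynomials of Theorem~\ref{theo:6}. In fact, your treatment of part~(ii) is slightly more careful than the paper's, which simply splits $(1+z)\prod_{k}(1+z^{2k-1})$ as $\prod_{k}(1+z^{2k-1})+z\prod_{k}(1+z^{2k-1})$ and leaves the passage from ``sum equals $0$ or $-1$'' to ``sum equals $0$ or $1$'' implicit; your explicit negation involution $\epsilon_j\mapsto\epsilon_{1-j}$ on the symmetric set of odd exponents fills that small gap.
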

\begin{proof}
Using Theorem \ref{theo:6}, the number $a_{2n+1}$ is the constant term in the Laurent polynomial expansion of 
$\prod_{k=-n}^n (1+z^k)$, which is a finite sum of the form $\sum A(n,m)z^m$. Obviously, the integer coefficient $A(n,m)$
is the number of all possible combinations from the terms $z^{-n},\ldots,z^0,\ldots,z^n$, whose product makes a $z^m$. In other words, by putting $\epsilon_k=1$ when $z^k$ contributes to such a product making a $z^m$, and $\epsilon_k =0$ otherwise, we conclude that 
$$A(n,m)=\#\{(\epsilon_{-n},\ldots,\epsilon_0,\ldots,\epsilon_n)\in\{0,1\}^{2n+1}:\sum_{k=-n}^{k=n} k\, \epsilon_k=m\}.$$
In particular, the constant term of the Laurent polynomial expansion is $A(n,0)$, and we have
$a_{2n+1}=A(n,0)$. This proves part (i). Fort part (ii), we use the same idea for the Laurent polynomial expansion of
$$(1+z)\prod_{k=-m+1}^{m}(1+z^{2k-1})=\prod_{k=-m+1}^{m}(1+z^{2k-1})+z\prod_{k=-m+1}^{m}(1+z^{2k-1})$$
as suggested by part (ii) of Theorem \ref{theo:6}.
\end{proof}

\

\noindent 
J. H. van Lint in \cite{jhVL67} answered a question of Erd\H{o}s by determining the asymptotic behavior 
of $$A(n,0)=\#\{(\epsilon_{-n},\ldots,\epsilon_0,\ldots,\epsilon_n)\in\{0,1\}^{2n+1}:\sum_{k=-n}^{k=n} k\, \epsilon_k=0\}.$$
The idea in his proof is as follows. Since $A(n,0)$ is the constant term of the Laurent polynomial expansion
of  $\prod_{k=-n}^n (1+z^k)$, we can compute it as the Cauchy integral 
$$\frac{1}{2\pi i}\oint_C\frac{\prod_{k=-n}^n (1+z^k)}{z}\,dz,$$
where $C$ denotes the unit circle. By parameterizing $C$ by $z=e^{2 i x}$ for 
$x\in[0,\pi]$, applying the elementary identity $(1+e^{2 i k x})(1+e^{-2 i k x})=4 \cos^2 kx$, and
a simple calculation we arrive at
$$A(n,0)=\frac{2^{2n+2}}{\pi}\int_0^{\frac{\pi}{2}}\prod_{k=1}^n\cos^2 kx\,dx.$$
We can then proceed by estimating the integrand near and far from $0$ using
some elementary inequalities, which lead to the asymptotic formula
$A(n,0)\thicksim(3/\pi)^\frac{1}{2} 2^{2n+1}n^{-\frac{3}{2}}$ \cite{jhVL67}. This will immediately  give
the asymptotic behavior of the sequence $\{a_{2n+1}\}$ by the previous corollary. One can 
adapt the arguments used by J. H. van Lint to obtain a similar asymptotic behavior for the 
sequence $\{a_{2n}\}$ by estimating the corresponding integral
$$\frac{2^{2n+2}}{\pi}\int_0^{\frac{\pi}{2}}\cos^2x\prod_{k=1}^n\cos^2 (2k-1)x\,dx,$$
which leads to the asymptotic formula $a_{2n}\thicksim(3/\pi)^\frac{1}{2} 2^{2n}n^{-\frac{3}{2}}$.
This gives rise to the following result.
 
\begin{coro}
$\displaystyle{a_n \thicksim \sqrt{\frac{24}{\pi}}\,2^n n^{-\frac{3}{2}}}$ when $n 
\rightarrow \infty$. In particular, $\displaystyle\lim_{n\to\infty}\frac{a_{n+1}}{a_n}=2$.
\end{coro}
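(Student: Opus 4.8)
The plan is to combine the two asymptotic formulas established in the preceding discussion for the even- and odd-indexed subsequences into a single uniform statement. From van Lint's analysis we have $a_{2n+1}\thicksim(3/\pi)^{1/2}2^{2n+1}n^{-3/2}$ and, by adapting his argument to the integral $\frac{2^{2n+2}}{\pi}\int_0^{\pi/2}\cos^2 x\prod_{k=1}^n\cos^2(2k-1)x\,dx$, we have $a_{2n}\thicksim(3/\pi)^{1/2}2^{2n}n^{-3/2}$. The idea is simply to re-express each of these in terms of the running index $n$ (the subscript of $a_n$) rather than the auxiliary index, and check that the two resulting expressions agree.

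First I would handle the odd case. Write $n=2\nu+1$, so $\nu\sim n/2$. Then $a_n=a_{2\nu+1}\thicksim(3/\pi)^{1/2}2^{2\nu+1}\nu^{-3/2}=(3/\pi)^{1/2}2^{n}\,(n/2)^{-3/2}(1+o(1))=(3/\pi)^{1/2}2^{3/2}\,2^{n}n^{-3/2}(1+o(1))=\sqrt{24/\pi}\,2^{n}n^{-3/2}(1+o(1))$. Next I would do the even case: write $n=2\nu$, so $\nu=n/2$, and $a_n=a_{2\nu}\thicksim(3/\pi)^{1/2}2^{2\nu}\nu^{-3/2}=(3/\pi)^{1/2}2^{n}(n/2)^{-3/2}=\sqrt{24/\pi}\,2^{n}n^{-3/2}(1+o(1))$, exactly the same expression. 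Since every positive integer $n$ falls into one of these two cases and the asymptotic constant and powers coincide, the single formula $a_n\thicksim\sqrt{24/\pi}\,2^{n}n^{-3/2}$ holds as $n\to\infty$. Finally, the ratio statement follows immediately: $\dfrac{a_{n+1}}{a_n}\thicksim\dfrac{\sqrt{24/\pi}\,2^{n+1}(n+1)^{-3/2}}{\sqrt{24/\pi}\,2^{n}n^{-3/2}}=2\left(\dfrac{n}{n+1}\right)^{3/2}\to 2$.

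The only genuine content that is not already in hand is the claim $a_{2n}\thicksim(3/\pi)^{1/2}2^{2n}n^{-3/2}$, whose proof is asserted in the text to follow van Lint's method applied to $\int_0^{\pi/2}\cos^2 x\prod_{k=1}^n\cos^2(2k-1)x\,dx$; I would treat that estimate as given. The main (minor) obstacle is purely bookkeeping: making sure the substitutions $\nu\sim n/2$ are done consistently so that the factor $2^{3/2}$ coming from $(n/2)^{-3/2}$ combines with $(3/\pi)^{1/2}$ to give precisely $\sqrt{24/\pi}$, and confirming that the $o(1)$ error terms from $\nu\sim n/2$ in the odd case do not spoil the leading asymptotics. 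Since $\sqrt{24/\pi}=2^{3/2}\sqrt{3/\pi}$, the constants match on the nose, and there is no real difficulty.
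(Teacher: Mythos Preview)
Your proposal is correct and follows exactly the approach the paper intends: the paper states the two subsequence asymptotics $a_{2n+1}\thicksim(3/\pi)^{1/2}2^{2n+1}n^{-3/2}$ and $a_{2n}\thicksim(3/\pi)^{1/2}2^{2n}n^{-3/2}$ and then simply says ``This gives rise to the following result,'' leaving the reindexing implicit. Your write-up actually supplies more detail than the paper does, carrying out the substitution $\nu\sim n/2$ and verifying that $(3/\pi)^{1/2}\cdot 2^{3/2}=\sqrt{24/\pi}$ explicitly.
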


\section{The positive cone of $K_0({\mathscr F}_{\theta,{\boldsymbol f}})$}\label{order}

\noindent In this section, we generalize a result of Kodaka on the order structure of the group $K_0$
of the crossed product by a Furstenberg transformation on the $2$-torus \cite[Theorem 5.2]{kK00}.
However, our approach is different, and follows the general guidelines of \cite[Lemma 3.1]{ncp07}.
We remind the reader that for a $C^*$-algebra $A$ the positive cone of $K_0(A)$ is the set $K_0(A)_+=\{[q]\in K_0(A): q\in \mathcal {P}_\infty(A)\}$, where $\mathcal {P}_\infty(A)$ is the set of all projections in matrix algebras over $A$.
Also, any positive trace $\tau$ on a $C^*$-algebra $A$ induces a group homomorphism $\tau_*:K_0(A)\to\Bbb R$.
As was indicated in the Introduction, when the Furstenberg transformation 
$\varphi_{\theta,{\boldsymbol f}}$ is minimal and uniquely ergodic, using the results of H. Lin and N. C. Phillips in \cite{LP10} the transformation group $C^*$-algebra ${\mathscr F}_{\theta,{\boldsymbol f}}$ is classifiable by its
Elliott invariant, and the order of $K_0({\mathscr F}_{\theta,{\boldsymbol f}})$ is determined by 
the unique tracial state $\tau$ on ${\mathscr F}_{\theta,{\boldsymbol f}}$ \cite{qLP97,ncP07}. The fact that $\tau_*K_0({\mathscr F}_{\theta,{\boldsymbol f}})=\Bbb Z + \Bbb Z\theta$ was first proved in the unpublished thesis of R. Ji \cite{rJ86}. However,  we will study the effect of the trace on the order structure of $K_0$ using R. Exel's machinery of rotation numbers \cite{rE87}.\\

%\noindent The following result was proved by R. Ji \cite[Theorem 2.23]{rJ86}, but we will prove it using the 
%R. Exel's machinery of Rotation Numbers \cite{rE87}.
\noindent For a $C^*$-algebra $A$, we denote by $U_p(A)$ the set of unitary elements of $M_p(A)$. The following lemma is well known, but it is convenient to state and prove it for self-containment of the paper.

\begin{lemm}\label{Bott}
Let $A$ and $B$ be unital $C^*$-algebras and let $A\otimes B$ denote their minimal tensor product.
Suppose that $u\in U_p(A)$ and $v\in U_q(B)$, and let $\phi:\mathcal{C}(\Bbb T^2):\to M_{pq}(A\otimes B)$
be the unique homomorphism mapping the coordinate unitaries $z_1,z_2\in U(\mathcal{C}(\Bbb T^2))$ to the commuting
unitaries $u\,\otimes\, 1_q, 1_p\,\otimes\, v\in U_{pq}(A\otimes B)$, respectively. Let $b(u,v)\in K_0(A\otimes B)$ denote the Bott element of $u, v$ defined by $K_0(\phi)(b)$, where $\beta=[z_1]\wedge[z_2]$ is the Bott element in $K_0(\mathcal{C}(\Bbb T^2))$ so that $K_0(\mathcal{C}(\Bbb T^2))=\Bbb Z[1]+\Bbb Z\,\beta$. Then $\tau_*(b(u,v))=0$
for any tracial state $\tau$ on $A\otimes B$.
\begin{proof}
Since $\tau\circ\phi$ is a trace on $\Bbb T^2$, there exists a Borel probability measure $\mu$ on $\Bbb T^2$ such that
$$(\tau\circ\phi)(f)=\int_{\Bbb T^2} f(x)\,d\mu(x),~~~~f\in\mathcal{C}(\Bbb T^2).$$
Write $\beta=[p]-[q]$,  where $p, q$ are appropriate projections in some matrix algebra 
over $\mathcal{C}(\Bbb T^2)$, so we have  
$$\tau_*(b(u,v))=\tau_*(K_0(\phi)(\beta)=(\tau\circ\phi)_*(\beta)=
\int_{\Bbb T^2}\mathrm{Tr}(p(x))-\mathrm{Tr}(q(x))\,d\mu(x).$$
It is well known that for the Bott element $b$ we have $\mathrm{Tr}(p(x))-\mathrm{Tr}(q(x))=0$,
namely, the projections $p(x)$ and $q(x)$ have the same rank for all $x\in\Bbb T^2$, and this common rank 
does not depend on $x$ since $\Bbb T^2$ is connected (in fact, they are rank one projections).
This can be proved either by a calculation of the traces of the projections $p(x)$ and $q(x)$ explicitly ({\it cf}. \cite[p. 7]{AP89}), or  
by using the  naturality in the K{\"u}nneth formula for $\Bbb T^2$, which shows that the image under 
any point evaluation of $\beta$ is zero. Briefly speaking, the map $x\mapsto\mathrm{Tr}(p(x))-\mathrm{Tr}(q(x))$
belongs to $\mathcal{C}(\Bbb T^2,\Bbb Z)$, so it has to assume a constant integer, which we call $\dim\beta$.
In particular, $\dim\beta$ is invariant under the change of coordinate $(\zeta_1,\zeta_2)\mapsto(\zeta_1,\zeta_2^{-1})$,
whereas the naturality of the K{\"u}nneth homomorphism $\alpha_{1,1}:K_1(\mathcal{C}(\Bbb T))\otimes K_1(\mathcal{C}(\Bbb T))\to K_0(\mathcal{C}(\Bbb T^2))$, which maps $[z]\otimes[z]$ to $\beta$, implies that the Bott element $\beta$ will transform into $-\beta$ under this change of coordinates since $[z]\otimes[z^{-1}]=[z]\otimes(-[z])=-([z]\otimes[z])$.
This means $\dim \beta=0$.
\end{proof}

\end{lemm}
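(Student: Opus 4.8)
The plan is to push the whole computation down into $K_0(\mathcal{C}(\Bbb T^2))$, where the Bott element is completely explicit, and then to kill a sign using a symmetry of $\Bbb T^2$. First I would note that $\tau$ induces a trace on $M_{pq}(A\otimes B)$ (by tensoring with the matrix trace), whose composition with $\phi$ is a trace on the commutative $C^*$-algebra $\mathcal{C}(\Bbb T^2)$, and is therefore integration against a finite positive Borel measure $\mu$ on $\Bbb T^2$. By functoriality of $K_0$ we have $\tau_*\bigl(b(u,v)\bigr)=\tau_*\bigl(K_0(\phi)(\beta)\bigr)=(\tau\circ\phi)_*(\beta)$, so it suffices to evaluate the trace induced by $\mu$ on the class $\beta\in K_0(\mathcal{C}(\Bbb T^2))$; nothing about $A$, $B$, $u$, $v$ is relevant beyond this reduction.

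Next I would represent $\beta=[p]-[q]$ with $p,q$ projections in some $M_N(\mathcal{C}(\Bbb T^2))$, and look at the function $x\mapsto\mathrm{Tr}\,p(x)-\mathrm{Tr}\,q(x)$ on $\Bbb T^2$. It is continuous and $\Bbb Z$-valued, being a difference of pointwise ranks of norm-continuous families of projections, hence constant on the connected space $\Bbb T^2$; call this constant $\dim\beta$. Then $(\tau\circ\phi)_*(\beta)=\int_{\Bbb T^2}\bigl(\mathrm{Tr}\,p(x)-\mathrm{Tr}\,q(x)\bigr)\,d\mu(x)=\mu(\Bbb T^2)\,\dim\beta$, so the lemma reduces to the purely topological assertion $\dim\beta=0$. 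One quick route is to exhibit explicit $p,q$ that are pointwise rank one, for instance $q$ a constant rank-one projection and $p$ the pull-back to $\Bbb T^2$ of the Bott projection on $S^2$, which gives $\dim\beta=1-1=0$; but to keep the argument free of any particular model for the Bott projection I would prefer the symmetry argument below.

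The symmetry argument uses naturality of the K{\"u}nneth map. The homeomorphism $\psi\colon(\zeta_1,\zeta_2)\mapsto(\zeta_1,\zeta_2^{-1})$ of $\Bbb T^2$ acts on $K_0(\mathcal{C}(\Bbb T^2))=\Bbb Z[1]\oplus\Bbb Z\beta$ by fixing $[1]$ and sending $\beta$ to $-\beta$: under the K{\"u}nneth isomorphism $\alpha_{1,1}\colon K_1(\mathcal{C}(\Bbb T))\otimes K_1(\mathcal{C}(\Bbb T))\to K_0(\mathcal{C}(\Bbb T^2))$ carrying $[z]\otimes[z]$ to $\beta$, one has $[z]\otimes[z^{-1}]=-([z]\otimes[z])$. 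On the other hand $\dim$ is invariant under every homeomorphism of $\Bbb T^2$, since it is defined purely through point evaluations of projections; hence $\dim\beta=\dim(\psi_*\beta)=\dim(-\beta)=-\dim\beta$, forcing $\dim\beta=0$ and therefore $\tau_*(b(u,v))=0$.

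I do not anticipate a serious obstacle here: the statement is classical and is recorded only for self-containment. The two points that merit care are (i) that $x\mapsto\mathrm{Tr}\,p(x)-\mathrm{Tr}\,q(x)$ is genuinely well defined and locally constant, i.e. that the rank of a norm-continuous family of projections over a connected base cannot jump, so that $\dim\beta$ is meaningful; and (ii) correctly tracking the sign through the K{\"u}nneth map, which is where the whole content of the lemma resides. Both are standard, so the proof will be short.
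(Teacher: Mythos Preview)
Your proposal is correct and follows essentially the same route as the paper's own proof: reduce to evaluating the induced trace on $\beta\in K_0(\mathcal{C}(\Bbb T^2))$ via the Riesz representation, observe that the pointwise rank difference is a constant integer $\dim\beta$, and then use the involution $(\zeta_1,\zeta_2)\mapsto(\zeta_1,\zeta_2^{-1})$ together with naturality of the K{\"u}nneth map to force $\dim\beta=-\dim\beta=0$. The paper also mentions the explicit Bott-projection calculation as an alternative, just as you do.
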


\noindent  We denote by $u$ the unitary in ${\mathscr F}_{\theta,{\boldsymbol f}}$ implementing the action generated by the transformation $\varphi_{\theta,{\boldsymbol f}}$ on $\Bbb T^n$ with irrational parameter $\theta$, and by $z_1$ the unitary in $\mathcal{C}(\Bbb T^n)$ defined by  $z_1(\zeta_1,\ldots,\zeta_n)=\zeta_1$ as in Section \ref{sec:general}. Then we have $uz_1u^{-1}=z_1\circ\varphi^{-1}_{\theta,{\boldsymbol f}}=e^{2\pi i\theta} z_1$ so that $C^*(u,z_1)\cong A_\theta$, the irrational rotation algebra. Let $p_\theta\in C^*(u,z_1)$ be a Rieffel projection of trace $\theta$ as in \cite{mR81}.
It is obvious that $\tau_*([1])=1$. On the other hand, since the restriction of $\tau$ on the $C^*$-subalgebra
$A_\theta\subseteq{\mathscr F}_{\theta,{\boldsymbol f}}$ has to be the unique tracial state on $A_\theta$,
we have $\tau_*([p_\theta])=\theta$. The main result of this section will show that all the essential information about
the order structure of $K_0({\mathscr F}_{\theta,{\boldsymbol f}})$ is encoded in the embedding of $A_\theta$
in ${\mathscr F}_{\theta,{\boldsymbol f}}$.

\begin{theo}\label{theo:order}
Let $\varphi_{\theta,{\boldsymbol f}}$ be a minimal uniquely ergodic Furstenberg transformation on  $\Bbb{T}^n$ with $\theta\in(0,1)$ (e.g. when $\theta\in(0,1)\setminus\Bbb Q$ and each $f_i$ satisfies a uniform Lipschitz condition in $\zeta_i$ for $i=1,\ldots,n-1$).  Let  $a_n$ and $\mathscr{T}^0_{\boldsymbol f}$ denote, respectively, the rank and the torsion subgroup of 
$K_0({\mathscr F}_{\theta,{\boldsymbol f}})$ so that $K_0({\mathscr F}_{\theta,{\boldsymbol f}})\cong{\mathbb Z}^{a_n}\oplus\mathscr{T}^0_{\boldsymbol f}$. Then the isomorphism of $K_0({\mathscr F}_{\theta,{\boldsymbol f}})$ with this group can be chosen in such a way that 
\begin{itemize}
\item[(i)] the unique tracial state $\tau$ on ${\mathscr F}_{\theta,{\boldsymbol f}}$ induces the map $$\tau_*(a[1]+b[p_\theta],{\boldsymbol c},{\boldsymbol t})=a+b\,\theta$$ on  $K_0({\mathscr F}_{\theta,{\boldsymbol f}})$ for all  $(a[1]+b[p_\theta],{\boldsymbol c},{\boldsymbol t})\in(\Bbb Z[1]+\Bbb Z[p_\theta])\oplus\Bbb Z^{a_n-2}\oplus\mathscr{T}^0_{\boldsymbol f}
\cong{\mathbb Z}^{a_n}\oplus\mathscr{T}^0_{\boldsymbol f}$,
\item[(ii)] the positive cone $K_0({\mathscr F}_{\theta,{\boldsymbol f}})_+$ can be identified with 
$$\{(a[1]+b[p_\theta],{\boldsymbol c},{\boldsymbol t})\in(\Bbb Z[1]+\Bbb Z[p_\theta])\oplus\Bbb Z^{a_n-2}\oplus\mathscr{T}^0_{\boldsymbol f}: a+b\,\theta>0\}\cup\{0\}.$$
\end{itemize}
\end{theo}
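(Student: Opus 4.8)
The plan is to derive both assertions from the single fact
$$\tau_*K_0({\mathscr F}_{\theta,{\boldsymbol f}})={\Bbb Z}+{\Bbb Z}\theta.\tag{$\dagger$}$$
Granting $(\dagger)$, part~(i) is formal: since $\theta$ is irrational the restriction of $\tau_*$ to ${\Bbb Z}[1]+{\Bbb Z}[p_\theta]$ is injective and has image ${\Bbb Z}+{\Bbb Z}\theta=\tau_*K_0({\mathscr F}_{\theta,{\boldsymbol f}})$, so ${\Bbb Z}[1]+{\Bbb Z}[p_\theta]\cong{\Bbb Z}^2$ is a direct complement of $\ker\tau_*$ in $K_0({\mathscr F}_{\theta,{\boldsymbol f}})$. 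As every torsion element lies in $\ker\tau_*$ and $\mathrm{rank}\,K_0({\mathscr F}_{\theta,{\boldsymbol f}})=a_n$ by Theorem~\ref{rank}, we get $\ker\tau_*\cong{\Bbb Z}^{a_n-2}\oplus{\mathscr T}^0_{\boldsymbol f}$, hence the decomposition in the statement, and $\tau_*(a[1]+b[p_\theta],{\boldsymbol c},{\boldsymbol t})=a+b\,\theta$ with respect to it. Part~(ii) then follows immediately from the fact recalled in the Introduction that for a uniquely ergodic minimal homeomorphism of a finite-dimensional infinite compact metric space whose trace has dense range on $K_0$, an element of $K_0$ is positive exactly when it is $0$ or has strictly positive trace \cite{qLP97,ncP07}; here ${\Bbb T}^n$ is such a space, $\tau$ is the unique tracial state (Corollary~\ref{coro:1}), and $\tau_*K_0({\mathscr F}_{\theta,{\boldsymbol f}})={\Bbb Z}+{\Bbb Z}\theta$ is dense in ${\Bbb R}$.

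It remains to prove $(\dagger)$. The inclusion ``$\supseteq$'' is immediate since $\tau_*([1])=1$ and $\tau_*([p_\theta])=\theta$. For ``$\subseteq$'' we use the Pimsner--Voiculescu short exact sequence $(\ref{K_0})$ for $\alpha=\varphi_{\theta,{\boldsymbol f}}$,
$$0\longrightarrow\mathrm{coker}(\alpha_0-\mathrm{id})\stackrel{\jmath_0}{\longrightarrow}K_0({\mathscr F}_{\theta,{\boldsymbol f}})\stackrel{\partial}{\longrightarrow}\ker(\alpha_1-\mathrm{id})\longrightarrow0.$$
The composite $\tau_*\circ\jmath_0$ sends a class in $K_0(\mathcal C({\Bbb T}^n))$ to the integral, against the (Lebesgue) invariant measure, of its rank function, which is locally constant hence constant on the connected space ${\Bbb T}^n$; so $\tau_*\circ\jmath_0$ is integer-valued, and in fact it is the dimension homomorphism $\Lambda^{\mathrm{even}}{\Bbb Z}^n\to{\Bbb Z}$, because every class of positive even degree is a sum of products of Bott classes, each of dimension zero (the ``$\dim\beta=0$'' part of Lemma~\ref{Bott}, applied with point evaluations on ${\Bbb T}^n$). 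Consequently $\tau_*$ annihilates $\mathrm{coker}(\alpha_0-\mathrm{id})=\jmath_0K_0(\mathcal C({\Bbb T}^n))$ modulo ${\Bbb Z}$, so it descends to a homomorphism $\bar\tau\colon\ker(\alpha_1-\mathrm{id})\to{\Bbb R}/{\Bbb Z}$, and $\tau_*K_0({\mathscr F}_{\theta,{\boldsymbol f}})\subseteq{\Bbb Z}+R$ for any set $R$ of representatives of $\mathrm{im}\,\bar\tau$.

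Everything therefore reduces to showing $\mathrm{im}\,\bar\tau\subseteq{\Bbb Z}\theta/{\Bbb Z}$ in ${\Bbb R}/{\Bbb Z}$, and this is where R. Exel's rotation numbers \cite{rE87} enter: $\bar\tau$ coincides with the (additive) rotation-number homomorphism attached to the $\alpha_*$-fixed classes in $K_1(\mathcal C({\Bbb T}^n))=\Lambda^{\mathrm{odd}}{\Bbb Z}^n$. In degree one, the matrix of $\hat\alpha$ being upper unitriangular and unipotent of maximal degree forces $\ker(\hat\alpha-\mathrm{id})\cap\Lambda^1{\Bbb Z}^n={\Bbb Z}e_1={\Bbb Z}[z_1]$, and $z_1\circ\varphi^{-1}_{\theta,{\boldsymbol f}}=e^{2\pi i\theta}z_1$ gives $\bar\tau([z_1]^m)=m\theta+{\Bbb Z}$; in particular $[p_\theta]$ is a $\partial$-lift of $[z_1]$, consistent with $\tau_*([p_\theta])=\theta$. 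For an $\alpha_*$-fixed class of odd degree at least $3$ one computes the rotation number from the coordinatewise cocycle $x\mapsto(z_i\circ\varphi^{-1}_{\theta,{\boldsymbol f}})(x)\,z_i(x)^{-1}$: this equals $e^{2\pi i\theta}$ for $i=1$, and for $i\ge2$ it is homotopic to a monomial in $z_1,\dots,z_{i-1}$ whose integral against the invariant measure vanishes unless the monomial is trivial. Hence only the first coordinate contributes, with weight $\theta$ times an integer (a rank of a difference bundle coming from the Bott/external-product structure of the class), so the rotation number lies in ${\Bbb Z}\theta+{\Bbb Z}$. Summing over the summands $\ker(\wedge^{2r+1}\hat\alpha-\mathrm{id})$ of $\ker(\alpha_1-\mathrm{id})$ yields $\mathrm{im}\,\bar\tau\subseteq{\Bbb Z}\theta/{\Bbb Z}$ and finishes the proof.

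The genuinely technical point is this last step for the higher-degree, matrix-valued fixed $K_1$-classes: one must track the Bott/external-product structure of such a class carefully enough to see that its rotation number picks up no irrationality beyond integer multiples of $\theta$. The degree-one case and the reductions above are routine; the rotation-number bookkeeping in degrees $\ge 3$ is where the argument has to be done with care.
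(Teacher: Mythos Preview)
Your overall architecture matches the paper's: split $K_0$ via the Pimsner--Voiculescu sequence, show $\tau_*$ is integer-valued on $\jmath_0 K_0(\mathcal C(\Bbb T^n))$ using that Bott classes have vanishing dimension (Lemma~\ref{Bott}), and handle the lifts of $\ker(\alpha_1-\mathrm{id})$ via Exel's rotation-number machinery. The formal reduction of (i) and (ii) to $(\dagger)$ is fine and is essentially what the paper does in a different order.

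The gap is exactly where you flag it: the rotation-number computation in odd degree $\ge 3$. Your ``coordinatewise cocycle'' argument is not correct as written, because the rotation number of a matrix-valued $\alpha_*$-fixed $K_1$-class is governed by the determinant of a representing unitary, not by the scalar cocycles $z_i\circ\varphi^{-1}\cdot z_i^{-1}$ individually; there is no mechanism in your sketch that explains why the contributions from these scalar cocycles assemble to an integer multiple of $\theta$ once you pass to an exterior-product class. The paper closes this gap cleanly, and in fact proves something stronger than you claim: for any basis element $\gamma=e_{i_1}\wedge\cdots\wedge e_{i_{2r+1}}$ with $r\ge 1$ one has $\mathrm{Det}_*(\gamma)=1$ in $[\Bbb T^n,\Bbb T]$, so Exel's homomorphism $\rho^\mu_\alpha$ is identically $1$ on $\bigoplus_{r\ge 1}\Lambda^{2r+1}_{\Bbb Z}(e_1,\dots,e_n)$ and $\tau_*$ is \emph{integer}-valued (not merely $\Bbb Z+\Bbb Z\theta$-valued) on the corresponding lifts. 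The computation is: write $\gamma=\eta\otimes[z]$ with $\eta=[p]-[q]\in K_0(\mathcal C(\Bbb T^{n-1}))$ via K\"unneth, represent $\gamma$ by $\omega_1\omega_2$ with $\omega_1=(1-p)\otimes 1+p\otimes z$ and $\omega_2=(1-q)\otimes 1+q\otimes z^{-1}$, and observe that $p(x)$ and $q(x)$ have the same rank $\rho$ for all $x$ (again the $\dim\beta=0$ fact), whence $\mathrm{Det}\,\omega_1(x,\zeta)=\zeta^{\rho}$, $\mathrm{Det}\,\omega_2(x,\zeta)=\zeta^{-\rho}$, and $\mathrm{Det}(\omega_1\omega_2)\equiv 1$. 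This is the missing idea; once you have it, the rest of your proof goes through, and in fact one never needs to analyze how $\theta$ might enter in higher degree --- it simply doesn't.
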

\begin{proof}
 The idea of the proof is to show that there exists a generating set for the finitely generated abelian group $K_0({\mathscr F}_{\theta,{\boldsymbol f}})$ including $[1]$ and $[p_\theta]$ such that the induced homomorphism $\tau_*$ vanishes at all generators, except for $[1]$ and $[p_\theta]$ for which we have $\tau_*([1])=1$ and $\tau_*([p_\theta])=\theta$. Using  Theorem \ref{theo:1}  and setting 
$\alpha=\varphi_{\theta,{\boldsymbol f}}$ and $\alpha_j=K_j(\alpha)$ for $j=1,2$ we have
$$K_0({\mathscr F}_{\theta,{\boldsymbol f}})\cong\mathrm{coker}(\alpha_0-\mathrm{id})\oplus\ker(\alpha_1-\mathrm{id})
= \bigoplus_{r \ge 0}[\mathrm{coker}(\wedge^{2r}\hat{\alpha}-\mathrm{id})\oplus
\ker(\wedge^{2r+1}\hat{\alpha}-\mathrm{id})],$$
where $\hat\alpha$ is the restriction of $\alpha_1$ to the subgroup $\Bbb Z[z_1]+\ldots+\Bbb Z[z_n]$ of $K_1(\mathcal{C}(\Bbb T^n))$ as in Section \ref{sec:general}, and $z_j(\zeta,\ldots,\zeta_n)=\zeta_j$ of $\mathcal{C}(\Bbb T^n)$ for $j=1,\ldots,n$. Note that by Definition \ref{furst-anzai}, $\boldsymbol{f}=(f_1,\ldots,f_{n-1})$ consists of continuous functions
$f_{j-1}:\Bbb T^{j-1}\to\Bbb T$ for $j=2,\ldots,n$. First,  we ``linearize" each $f_{j-1}$ by finding the unique ``linear" function $$(\zeta_1\ldots,\zeta_{j-1})\mapsto\zeta_1^{b_{1j}}\ldots\zeta_{j-1}^{b_{j-1,j}},\hspace{.4in}(b_{j-1,j}\ne 0)$$ in the homotopy class of $f_{j-1}$. This allows us to calculate $\hat\alpha([z_j])$ by writing
$$
\hat\alpha([z_1])=[z_1\circ\varphi_{\theta,{\boldsymbol f}}^{-1}]=[e^{2\pi i\theta}z_1]=[z_1],
$$
$$
\hat\alpha([z_j])=[z_j\circ\varphi_{\theta,{\boldsymbol f}}^{-1}]=[f_{j-1}(z_1,\ldots,z_{j-1})z_j]=
[z_1^{b_{1j}}\ldots z_{j-1}^{b_{j-1,j}}z_j]=b_{1j}[z_1]+\ldots +b_{j-1,j}[z_{j-1}]+[z_j], 
$$
for $j=2,\ldots,n$. In other words, the integer
matrix of $\hat\alpha$ with respect to the basis $\{[z_1]\ldots,[z_n]\}$ of $\Bbb Z^n$ is  precisely in the form ($\heartsuit$) as in the proof of Theorem \ref{rank}. Now, we can realize $\alpha_0=\wedge^{\mathrm{even}}\hat\alpha$ and $\alpha_1=\wedge^{\mathrm{odd}}\hat\alpha$ to calculate the $K$-groups of ${\mathscr F}_{\theta,{\boldsymbol f}}$
as in Section \ref{sec:general}. \\

\noindent It is important to note that, by referring to the exact sequences (\ref{P-V}), (\ref{K_0}) and (\ref{K_1}) in Section 1, the isomorphic image of $\mathrm{coker}(\alpha_0-\mathrm{id})$ in $K_0({\mathscr F}_{\theta,{\boldsymbol f}})$ is precisely the image $\mathrm{im}\jmath_0$ of $K_{0}(\mathcal{C}(\Bbb{T}^n))$, and an isomorphic image of $\ker(\alpha_1-\mathrm{id})$ in $K_0({\mathscr F}_{\theta,{\boldsymbol f}})$ is obtained by finding 
the image of a splitting (injective) homomorphism $s:\ker(\alpha_1-\mathrm{id})\to K_0({\mathscr F}_{\theta,{\boldsymbol f}})$
for the exact sequence (\ref{K_0}) so that $\partial\circ s=\mathrm{id}$ on $\ker(\alpha_1-\mathrm{id})$.
Any such splitting homomorphism is obtained as follows: fix a basis $\{\gamma_1,\ldots,\gamma_q\}$ for the 
free finitely generated group $\ker(\alpha_1-\mathrm{id})$ of rank $q$, and find elements $\nu^{(0)}_1,\ldots,\nu^{(0)}_q\in K_0({\mathscr F}_{\theta,{\boldsymbol f}})$ such that $\partial\nu^{(0)}_j=\gamma_j$ for $j=1,\ldots,q$. Then define $s(\sum_j m_j\gamma_j)=\sum_j m_j\nu^{(0)}_j$ for $m_j\in\Bbb Z$. Clearly, $K_0({\mathscr F}_{\theta,{\boldsymbol f}})=\mathrm{im}\,\jmath_0\oplus\mathrm{im}\,s$.\\

\noindent  Now, since $\wedge^0\hat\alpha=\mathrm{id}$ on $\Lambda^0\Bbb Z^n=\Bbb Z$, and 
$\wedge^1\hat\alpha=\hat\alpha$ on $\Lambda^1\Bbb Z^n=\Bbb Z^n$, we can write the
isomorphism
$$K_0({\mathscr F}_{\theta,{\boldsymbol f}})
\cong \Bbb Z\oplus\ker(\hat{\alpha}-\mathrm{id})\oplus\bigoplus_{r \ge 1}[\ker(\wedge^{2r+1}\hat{\alpha}-\mathrm{id})\oplus\mathrm{coker}(\wedge^{2r}\hat{\alpha}-\mathrm{id})].$$
In fact, a single generator for the isomorphic image of $\Bbb Z$ in $K_0({\mathscr F}_{\theta,{\boldsymbol f}})$ is $[1]$, and since $b_{j-1,j}\ne 0$ for $j=2,\ldots,n$ we have $\ker(\hat{\alpha}-\mathrm{id})=\Bbb Z e_1=\Bbb Z [z_1]$. It is easy to see that $\partial([p_\theta])=[z_1]$ (see the proposition in the appendix of \cite{mP80}).
Therefore there exists a basis $\{\gamma_1,\ldots,\gamma_q\}$ for $\ker({\alpha}_1-\mathrm{id})=\oplus_{r\ge 0}\ker(\wedge^{2r+1}\hat{\alpha}-\mathrm{id})$ with $\gamma_1=[z_1]$ and a splitting homomorphism 
$s:\ker(\alpha_1-\mathrm{id})\to K_0({\mathscr F}_{\theta,{\boldsymbol f}})$ with $s([z_1])=[p_\theta]$. Hence a single generator for the image of $\ker(\hat{\alpha}-\mathrm{id})$ in $K_0({\mathscr F}_{\theta,{\boldsymbol f}})$ is $[p_\theta]$. \\

\noindent It remains to study the effect of $\tau_*$ on the isomorphic image of $\oplus_{r\ge 1}\mathrm{coker}(\wedge^{2r}\hat{\alpha}-\mathrm{id})$, which contains the the torsion subgroup $\mathscr{T}^0_{\boldsymbol f}$, and the image of $\oplus_{r\ge 1}\ker(\wedge^{2r+1}\hat{\alpha}-\mathrm{id})$ in $K_0({\mathscr F}_{\theta,{\boldsymbol f}})$.
For more convenience, set $e_j:=[z_j]$ for $j=1,\ldots,n$ as in Section \ref{sec:general}.
First, we study the isomorphic image of $\oplus_{r\ge 1}\mathrm{coker}(\wedge^{2r}\hat{\alpha}-\mathrm{id})$.
We show that $\tau_*$ vanishes on this whole subgroup by showing, equivalently, that $\tau_*$
vanishes on the image of the subgroup $\oplus_{r\ge 1}\Lambda_{\Bbb Z}^{2r}(e_1,\ldots,e_n)\subset K_0(\mathcal{C}(\Bbb T^n))$
in $K_0({\mathscr F}_{\theta,{\boldsymbol f}})$ under the map $\jmath_0$. Let $\eta=e_{i_1}\wedge\ldots\wedge
e_{i_{2r}}\in K_0(\mathcal{C}(\Bbb T^n))$ for some $r\ge 1$ and $1\le i_1<\ldots<i_{2r}\le n$. We want to show that 
$\tau_*(\jmath_0(\eta))=0$, where $\jmath_0:=K_0(\jmath)$ and $\jmath:\mathrm{C}(\Bbb T^n)\to {\mathscr F}_{\theta,{\boldsymbol f}}$ is the natural embedding in the structure of the crossed product ${\mathscr F}_{\theta,{\boldsymbol f}}=\mathrm{C}(\Bbb T^n)\rtimes_\alpha\Bbb Z$. By K{\"u}nneth formula we have $\eta=b(u,z)$, where $u$ is a unitary in 
some matrix algebra over $\mathcal{C}(\Bbb T^{n-1})$ with $[u]=e_{i_1}\wedge\ldots\wedge e_{i_{2r-1}}\in K_1(\mathcal{C}(\Bbb T^{n-1}))$ and $z$ is the canonical unitary in $\mathcal{C}(\Bbb T)$ with $[z]=e_{i_{2r}}\in K_1(\mathcal{C}(\Bbb T))$. By Lemma \ref{Bott}, we have $$\tau_*(\jmath_0(\eta))=\tau_*(K_0(\jmath)(\eta))=
(\tau\circ\jmath)_*(\eta)=(\tau\circ\jmath)_*(b(u,z))=0.$$
Now, we study the isomorphic image of $\oplus_{r\ge 1}\ker(\wedge^{2r+1}\hat{\alpha}-\mathrm{id})$ in
$K_0({\mathscr F}_{\theta,{\boldsymbol f}})$. We will show that $\tau_*$ assumes only integer values on this
whole subgroup. In other words, if $\{\gamma_1,\ldots,\gamma_q\}$ is a basis for the $\ker(\alpha_0-\mathrm{id})$
as above such that $\gamma_1=[z_1]$ is a basis for $\ker(\hat\alpha-\mathrm{id})$ and $\{\gamma_2,\ldots,\gamma_q\}$ is a basis for $\oplus_{r\ge 1}\ker(\wedge^{2r+1}\hat{\alpha}-\mathrm{id})$, then $\tau_*(\nu^{(0)}_1)=\theta$,
and $\tau_*(\nu^{(0)}_j)=k_j$ for some $k_j\in\Bbb Z$ for $j=2,\ldots,q$, where $\nu^{(0)}_j$'s are chosen in $K_0({\mathscr F}_{\theta,{\boldsymbol f}})$ so that $\partial(\nu^{(0)}_j)=\gamma_j$ for $j=1,\ldots,q$ and
$\nu^{(0)}_1=[p_\theta]$ as above. To demonstrate this, we prove that the determinant of any unitary representing an element in the subgroup $\oplus_{r\ge 1}\Lambda^{2r+1}_{\Bbb Z}(e_1,\ldots,e_n)$ is the 
constant function $1$. Then the rotation number homomorphism $\rho^\mu_\alpha:\ker(\alpha_1-\mathrm{id})\to\Bbb T$ defined by R. Exel is the constant $1$ on the subgroup $\oplus_{r\ge 1}\ker(\wedge^{2r+1}\hat{\alpha}-\mathrm{id})$ \cite[Theorem VI.11]{rE87}, hence the trace will be integer-valued on this subgroup because $\exp(2\pi i\tau_*(\eta))=\rho^\mu_\alpha\circ\partial(\eta)$
for all $\eta\in K_0({\mathscr F}_{\theta,{\boldsymbol f}})$ \cite[Theorem V.12]{rE87}. To calculate the 
determinant on $\oplus_{r\ge 1}\Lambda^{2r+1}_{\Bbb Z}(e_1,\ldots,e_n)$, let $\gamma=e_{i_1}
\wedge\ldots\wedge e_{i_{2r+1}}\in K_1(\mathcal{C}(\Bbb T^{n}))$, set $\eta=e_{i_1}\wedge\ldots\wedge
e_{i_{2r}}\in K_0(\mathcal{C}(\Bbb T^{n-1}))$, and write $\eta=[p]-[q]$ as above. Then $e_{i_{2r+1}}=[z]$
for the canonical unitary $z$ of $\mathcal{C}(\Bbb T)$, and using the K{\"u}nneth formula we have
\begin{eqnarray*}\gamma=\eta\otimes[z]=([p]-[q])\otimes[z]=[p]\otimes[z]+[q]\otimes[z^{-1}]
=[((1-p)\otimes1+p\otimes z)((1-q)\otimes1+q\otimes z^{-1})]
\end{eqnarray*}
So, $\gamma=[\omega_1 \omega_2]$, where $\omega_1:=(1-p)\otimes1+p\otimes z$ and 
$\omega_2:=(1-q)\otimes1+q\otimes z^{-1}$ are unitaries in some matrix algebra of the same size
over $\mathcal{C}(\Bbb T^n)$. Since for all $x\in\Bbb T^{n-1}$ the projections $p(x)$ and $q(x)$ have 
the same rank $\rho$ as in the proof of the previous lemma, we have the following equivalence 
of projections in some matrix algebra $M_l(\Bbb C)$
$$p(x)\sim\textbf{1}_\rho\oplus\textbf{0}_{\l-\rho}\sim q(x),$$
where $\textbf{1}_m, \textbf{0}_m$ denote the identity and the zero matrix of order $m$, respectively, and 
$\oplus$ is the direct sum of matrices. This implies the following 
unitary equivalence of projections in $M_{2l}(\Bbb C)$
$$p(x)\oplus \textbf{0}_l\sim_u \textbf{1}_\rho\oplus \textbf{0}_{2l-\rho}\sim_u q(x)\oplus \textbf{0}_l.$$

\noindent In particular, we conclude the following unitary equivalence of unitary matrices for all $(x,\zeta)\in\Bbb T^{n-1}\times\Bbb T$
$$\omega_1(x,\zeta)\oplus \textbf{1}_l\sim_u\zeta \textbf{1}_\rho\oplus \textbf{1}_{2l-\rho},\hspace{.4in}\omega_2(x,\zeta)\oplus \textbf{1}_l\sim_u\zeta^{-1} \textbf{1}_\rho\oplus \textbf{1}_{2l-\rho}.$$

\noindent Therefore $\mathrm{Det}\,\omega_1(x,\zeta)=\zeta^\rho$ and $\mathrm{Det}\,\omega_2(x,\zeta)=\zeta^{-\rho}$, hence $\mathrm{Det}\,(\omega_1\omega_2)(x,\zeta)=1$, for all $(x,\zeta)\in\Bbb T^{n-1}\times\Bbb T$.
This implies that $\mathrm{Det}_*(\gamma)=1\in[\Bbb T^n,\Bbb T]$, where $[\Bbb T^n,\Bbb T]$ denotes the set of homotopy classes of continuous functions from $\Bbb T^n$ to $\Bbb T$  (see Definition VI.8 and Proposition VI.9 of \cite{rE87}).\\

\noindent Finally, by setting $\nu_1:=\nu^{(0)}_1=[p_\theta]$ and $\nu_k:=\nu^{(0)}_j-k_j[1]$ for $j=2,\ldots,q$ so that $\tau_*(\nu_1)=\theta$ and $\tau_*(\nu_j)=0$ for $j=2,\ldots,q$, we can form a generating set with the desired property for $K_0({\mathscr F}_{\theta,{\boldsymbol f}})$ by taking the union of $\{\nu_1,\ldots \nu_q\}$ and a generating
set including $[1]$ for the isomorphic image of $\mathrm{coker}(\alpha_0-\mathrm{id})$. This proves part (i).\\

\noindent For part (ii), we use part (i) together with the fact that the order on $K_0({\mathscr F}_{\theta,{\boldsymbol f}})$ is determined by the effect of the unique tracial state $\tau$ because $\Bbb T^n$ is a finite dimensional infinite compact metric
space and $\varphi_{\theta,{\boldsymbol f}}$ is a minimal homeomorphism of $\Bbb T^n$ (see Theorem 5.1(1) of \cite{qLP97} or Theorem 4.5(1) of \cite{ncP07}).

\end{proof}

\begin{coro}
Let $\varphi_{\theta,{\boldsymbol f}}$ be a minimal uniquely ergodic Furstenberg transformation on $\Bbb T^n$ as 
above. Then linearizing the functions $f_i:\Bbb T^i\to\Bbb T$ in $\boldsymbol{f}=(f_1,\ldots,f_{i-1})$ does not change 
the isomorphism class of the transformation group $C^*$-algebra ${\mathscr F}_{\theta,{\boldsymbol f}}$.
\end{coro}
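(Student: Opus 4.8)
The plan is to deduce the statement from the Lin--Phillips classification theorem \cite[Corollary 4.8]{LP10}, so that it will suffice to exhibit an isomorphism of Elliott invariants. Write $\varphi_{\theta,{\boldsymbol{g}}}$ for the linearized transformation, i.e.\ the affine Furstenberg transformation obtained by replacing each $f_i$ with its unique ``linear'' homotopy representative $\zeta_1^{b_{1,i+1}}\cdots\zeta_i^{b_{i,i+1}}$, where $b_{j,i+1}=A_j[f_i]$; note that $b_{i,i+1}=A_i[f_i]\ne 0$, so $\varphi_{\theta,{\boldsymbol{g}}}$ is indeed again a Furstenberg transformation. First I would check that both ${\mathscr F}_{\theta,{\boldsymbol{f}}}$ and ${\mathscr F}_{\theta,{\boldsymbol{g}}}$ lie in the scope of that theorem. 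Since $\varphi_{\theta,{\boldsymbol{f}}}$ is minimal, $\theta$ must be irrational (a rational rotation in the first coordinate would make the first-coordinate orbits finite, destroying minimality), and then Theorem \ref{Furstenberg} guarantees that the affine transformation $\varphi_{\theta,{\boldsymbol{g}}}$ is again minimal and uniquely ergodic. Hence both algebras are crossed products of $\mathcal C(\Bbb T^n)$ by a uniquely ergodic minimal homeomorphism of the finite-dimensional infinite compact metric space $\Bbb T^n$, with $\tau_*K_0=\Bbb Z+\Bbb Z\theta$ dense in $\Bbb R$ (by Theorem \ref{theo:order}(i)), so \cite[Corollary 4.8]{LP10} applies to each.

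Next I would compare the invariants. The homotopy data of $\varphi_{\theta,{\boldsymbol{f}}}$ and of $\varphi_{\theta,{\boldsymbol{g}}}$ are identical: both give rise to the same integer matrix $(\heartsuit)$ acting on $K_1(\mathcal C(\Bbb T^n))=\Bbb Z^n$, since linearization preserves every exponent $A_j[f_i]$. By Theorem \ref{theo:1} the groups $K_0$ and $K_1$ of $\mathcal C(\Bbb T^n)\rtimes_\alpha\Bbb Z$ depend only on this matrix (through its exterior powers); hence $K_1({\mathscr F}_{\theta,{\boldsymbol{f}}})\cong K_1({\mathscr F}_{\theta,{\boldsymbol{g}}})$, the torsion subgroups satisfy $\mathscr{T}^0_{\boldsymbol{f}}\cong\mathscr{T}^0_{\boldsymbol{g}}$, and the common rank of $K_0$ is $a_n$, which depends only on $n$ by Theorem \ref{rank}. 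For the ordered group with order unit I would then invoke Theorem \ref{theo:order} for each algebra; it exhibits $$K_0({\mathscr F}_{\theta,{\boldsymbol{f}}})\cong(\Bbb Z[1]+\Bbb Z[p_\theta])\oplus\Bbb Z^{a_n-2}\oplus\mathscr{T}^0_{\boldsymbol{f}},\qquad K_0({\mathscr F}_{\theta,{\boldsymbol{g}}})\cong(\Bbb Z[1]+\Bbb Z[p_\theta])\oplus\Bbb Z^{a_n-2}\oplus\mathscr{T}^0_{\boldsymbol{g}}$$ in such a way that in both cases $\tau_*$ is the map $(a[1]+b[p_\theta],{\boldsymbol{c}},{\boldsymbol{t}})\mapsto a+b\theta$, the order unit $[1]$ corresponds to $(1\cdot[1]+0\cdot[p_\theta],{\boldsymbol 0},{\boldsymbol 0})$, and the positive cone is $\{x:\tau_*(x)>0\}\cup\{0\}$.

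Then I would assemble the isomorphism of Elliott invariants. On $K_0$ I would take the identity on the rank-two summand $\Bbb Z[1]+\Bbb Z[p_\theta]$ --- which is a genuine direct summand with $\Bbb Z$-basis $\{[1],[p_\theta]\}$, since $1$ and $\theta$ are rationally independent --- and combine it with an arbitrary group isomorphism $\Bbb Z^{a_n-2}\to\Bbb Z^{a_n-2}$ and an arbitrary group isomorphism $\mathscr{T}^0_{\boldsymbol{f}}\to\mathscr{T}^0_{\boldsymbol{g}}$. The resulting map $K_0({\mathscr F}_{\theta,{\boldsymbol{f}}})\to K_0({\mathscr F}_{\theta,{\boldsymbol{g}}})$ sends $[1]$ to $[1]$ and intertwines the two trace maps, and since the order on $K_0$ of either algebra is determined by its unique trace \cite{qLP97,ncP07}, it is automatically an isomorphism of ordered abelian groups with order unit. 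Together with the isomorphism of the $K_1$-groups this is an isomorphism of Elliott invariants, and \cite[Corollary 4.8]{LP10} then yields ${\mathscr F}_{\theta,{\boldsymbol{f}}}\cong{\mathscr F}_{\theta,{\boldsymbol{g}}}$, which is the assertion.

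The main obstacle, such as it is, will be the bookkeeping around the order structure: one must arrange that the ordered-group isomorphism simultaneously sends $[1]\mapsto[1]$ and respects the positive cone. This is unproblematic precisely because the cone only ``sees'' the trace value $a+b\theta$ while $[1]$ sits at $(1,0,{\boldsymbol 0},{\boldsymbol 0})$ in both normal forms, so any isomorphism that is the identity on $\Bbb Z[1]+\Bbb Z[p_\theta]$ and arbitrary on the remaining, $\tau_*$-annihilated, summands does the job. A second point I would take care to state carefully is the preliminary reduction that linearization keeps $\varphi$ in the minimal uniquely ergodic regime, so that the classification machinery can be applied at all; this follows at once from Theorem \ref{Furstenberg} once one notes that $\theta$ is forced to be irrational.
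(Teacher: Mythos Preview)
Your proposal is correct and follows essentially the same route as the paper's own proof: reduce to the Lin--Phillips classification \cite{LP10} by showing that the Elliott invariant is unchanged under linearization, using that the $K$-groups depend only on the homotopy matrix (Theorem \ref{theo:1}) and that Theorem \ref{theo:order} pins down the order structure via $\tau_*$ in both cases. Your write-up is in fact more careful than the paper's sketch, spelling out explicitly why the linearized transformation is again minimal and uniquely ergodic and how to build the ordered-group isomorphism sending $[1]$ to $[1]$.
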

\begin{proof}
Since $\varphi_{\theta,{\boldsymbol f}}$ is minimal, $\theta$ must be irrational. So the range of the unique
tracial state (by unique ergodicity) on $K_0({\mathscr F}_{\theta,{\boldsymbol f}})$ is dense in $\Bbb R$ as it is 
$\Bbb Z + \Bbb Z \theta$ by the above argument. Benefiting from the results of \cite{LP10}, such $C^*$-algebras are completely classifiable by their Elliott invariants, which remain unchanged (up to isomorphism) after the linearization process: linearizing does not change the isomorphism classes of the $K$-groups, and the previous theorem guarantees that the order structure of the group  $K_0$ is precisely the regular order inherited from $\Bbb R$ on $\Bbb Z + \Bbb Z \theta$ before and after linearization.
\end{proof}

  \section{Concluding remarks}

\noindent ${\bf I)}$ The method used in Section $1$ for computing $K$-groups of 
the transformation group $C^*$-algebras of homeomorphisms of the tori may be extended to more 
general settings. Let $G$ be a compact connected Lie group with torsion-free fundamental group $\pi_1(G)$.
(It is well known that the fundamental group of such spaces are finitely generated and abelian, so being torsion-free means $\pi_1(G)\cong\Bbb Z^l$, for some $l$.) Some important examples are any finite Cartesian products of the groups
$S^3$, $SO(2)$, $Sp(n)$, $U(n)$ and $SU(n)$. Then $K^*(G)$ is torsion-free and can be given the structure of a
  $\mathbb{Z}_2$-graded Hopf algebra over the integers \cite{lH67}. 
Moreover, regarded as a Hopf algebra, $K^*(G)$ is the exterior algebra on the module of the primitive elements, 
which are of degree 1. The module of the primitive elements of $K^*(G)$ may also be 
described as follows. Let $U(n)$ denote the group of unitary matrices of order $n$ 
and let $U:=\cup_{n=1}^\infty U(n)$ be the stable unitary group.  Any unitary 
representation $\rho:G \rightarrow U(n)$, by composition with the inclusion $U(n)\subset U$, defines a homotopy 
class $\beta(\rho)$ in $[G,U]=K^1(G)$. The module of the primitive elements in $K^1(G)$
  is exactly the module generated by all classes $\beta(\rho)$ of this type.
  If in addition, $G$ is semisimple and simply connected of rank $l$, there 
are $l$ basic irreducible representations $\rho_1,\ldots,\rho_l$, whose 
maximum weights
  $\lambda_1,\ldots,\lambda_l$ form a basis for the character group 
$\mathbf{\hat{T}}$
  of the maximal torus $\textbf{T}$ of $G$ and the classes 
$\beta(\rho_1),\ldots,
  \beta(\rho_l)$ form a basis for the module of the primitive elements in 
$K^1(G)$ and
  $K^*(G)=\Lambda^*(\beta(\rho_1),\ldots,\beta(\rho_l))$. In any case, to 
compute
  $K_*(\mathcal{C}(G)\rtimes_{\alpha}\mathbb{Z})$ it is sufficient to 
determine the homotopy classes of $\alpha\circ\rho$ for the irreducible representations 
$\rho$ of $G$ in terms of $\beta(\rho)$'s.\\

 \noindent ${\bf II)}$ There is a relation between the $K$-theory of transformation group
  \text{$C^*$-algebras} of the homeomorphisms of the tori and the topological $K$-theory of compact 
nilmanifolds. In fact, let $\alpha=(\boldsymbol{t},\mathsf{A})$ be an affine transformation on 
$\Bbb{T}^n$ satisfying the conditions (1) through (4) in Remark \ref{rema:3}. Then it 
has been shown in \cite{fjH63} that $\alpha$ is conjugate (in the group of affine transformations on
$\Bbb{T}^n$) to the transformation $\alpha'=(\boldsymbol{t'},\mathsf{A}')$, where $\mathsf{A}'$ has an upper triangular matrix, whose bottom right $k \times k$ corner is the identity matrix $\mathsf{I}_k$ and $\boldsymbol{t}'=(0,\ldots,0,t'_1,
\ldots,t'_k)$. The transformation $\alpha'$ is called a \emph{standard form} for $\alpha$ \cite{jP86}.
   Assume that $\alpha$ is given in standard form. Then J. Packer associates an \emph{induced flow} $(\Bbb{R}, N/\Gamma)$ to the flow $(\Bbb{Z},\Bbb{T}^n)$ generated by $\alpha$, where $N$ is a simply connected nilpotent Lie 
group of dimension $n+1$, the discrete group $\Gamma$ is a cocompact subgroup of $N$, and the action of $\Bbb{R}$ is 
given by translation on the left by $\exp sX$ for $s \in\mathbb{R}$ and some $X 
\in\mathfrak{n}$, the Lie algebra of $N$. One of the most important facts is
  that the \text{$C^*$-algebra} $\mathcal{C}(N/\Gamma)\rtimes_\beta\Bbb{R}$ 
corresponding to the
  induced flow is strongly Morita equivalent to
  $\mathcal{C}(\Bbb{T}^n)\rtimes_\alpha\Bbb{Z}$ 
  \cite[Proposition 3.1]{jP86}. Consequently, one has
  \begin{equation}\label{Packer}
  K_i(\mathcal{C}(\Bbb{T}^n)\rtimes_\alpha\Bbb{Z})\cong
  K_i(\mathcal{C}(N/\Gamma)\rtimes_\beta\Bbb{R})\cong 
K^{1-i}(N/\Gamma);~~~i=0,1.
  \end{equation}
  The second isomorphism here, is the Connes' Thom
  isomorphism. So the $K$-theory of 
$\mathcal{C}(\Bbb{T}^n)\rtimes_\alpha\Bbb{Z}$
  is converted to the topological $K$-theory of the compact nilmanifold 
$N/\Gamma$.
  Following the proof of Proposition 3.1 in \cite{jP86}, one can conclude 
that for the special case of Anzai transformations we can take $N=\mathfrak{F}_{n-1}$ (the generic filiform Lie group 
of dimension $n+1$) and $\Gamma=\mathfrak{D}_{n-1}$, which were defined in \cite{kR06}.
  On the other hand, following \cite[Theorem 3.6]{jR84}, one has the 
 isomorphism
  \begin{equation}\label{nil}
  K_i(C^*(\Gamma))\cong K^{i+n+1}(N/\Gamma);~~~i=0,1.
  \end{equation}
  Combining (\ref{Packer}) and (\ref{nil}) one gets
  \begin{equation}
  K_i(\mathcal{C}(\Bbb{T}^n)\rtimes_\alpha\Bbb{Z})\cong 
K^{i+1}(N/\Gamma)\cong K_{i+n}(C^*(\Gamma));~~~i=0,1.
  \end{equation}
Using the above isomorphisms, one can relate the algebraic invariants of
the involved $C^*$-algebras and topological information of the 
corresponding nilmanifold. For example, since $N/\Gamma$ is a classifying space for 
$\Gamma$,
one has the following isomorphisms
\begin{equation}\label{cohomo}
H^*_{\mathrm{dR}}(N/\Gamma)\cong \Check{H}^*(N/\Gamma,\Bbb{R})\cong 
H^*(\Gamma,\Bbb{R})
\cong H^*(N,\Bbb{R})\cong H^*(\mathfrak{n},\Bbb{R}),
\end{equation}
where $H^*_{\mathrm{dR}}(N/\Gamma)$ denotes the de Rham cohomology of the 
manifold
$N/\Gamma$, $H^*(N/\Gamma,\Bbb{R})$ denotes the \v{C}ech cohomology of 
$N/\Gamma$ with
coefficients in $\Bbb{R}$, $H^*(\Gamma,\Bbb{R})$ denotes the group 
cohomology of $\Gamma$ with
coefficients in the trivial $\Gamma$-module $\Bbb{R}$, $H^*(N,\Bbb{R})$ 
denotes the
Moore cohomology group of $N$ (as a locally compact group) with coefficients 
in the trivial
Polish $N$-module $\Bbb{R}$,
and $H^*(\mathfrak{n},\Bbb{R})$ denotes the cohomology of the Lie algebra 
$\mathfrak{n}$
with coefficients in the trivial $\mathfrak{n}$-module $\Bbb{R}$. Now, using 
the Chern
isomorphisms 
$\mathrm{ch}_0:K^0(N/\Gamma)\otimes\Bbb{Q}
\rightarrow\Check{H}^{\mathrm{even}}(N/\Gamma,\Bbb{Q})$ and 
$\mathrm{ch}_1:K^1(N/\Gamma)\otimes\Bbb{Q}\rightarrow
\Check{H}^{\mathrm{odd}}(N/\Gamma,\Bbb{Q})$, one concludes that
the even and odd cohomology groups stated in (\ref{cohomo}) are all isomorphic to 
$\mathbb{R}^{k}$,
where $k$ is the (common) rank of the $K$-groups of
$\mathcal{C}(\Bbb{T}^n)\rtimes_\alpha\Bbb{Z}$ as in Corollary \ref{coro:3}.
As an example, if $N=\mathfrak{F}_{n-1}$, $\Gamma=\mathfrak{D}_{n-1}$, and
$\mathfrak{n}=\mathfrak{f}_{n-1}$, then the even and odd cohomology groups
stated in (\ref{cohomo}) are all isomorphic to $\mathbb{R}^{a_{n}}$,
where $a_{n}$ is the rank of the $K$-groups of $\mathscr{A}_{n,\theta}$ that
was studied in detail in Sections \ref{sec:rank} and \ref{sec:comb}. 
Conversely, one may use the topological
tools for $N/\Gamma$ to get some information about
$\mathcal{C}(\Bbb{T}^n)\rtimes_\alpha\Bbb{Z}$ and $C^*(\Gamma)$.
For example, we know that $N/\Gamma$ as a compact nilmanifold
can be constructed as a principal $\Bbb{T}$-bundle over a lower
dimensional compact nilmanifold \cite{vG97}. Then we can compute the 
topological
$K$-groups of $N/\Gamma$ using the six term Gysin exact sequence
\cite[IV.1.13, p. 187]{mK78}. As an example, one can see that 
$\mathfrak{F}_{n-1}/\mathfrak{D}_{n-1}$
is a principal \text{$\Bbb{T}$-bundle} over $\mathfrak{F}_{n-2}/\mathfrak{D}_{n-2}$, 
and the corresponding
Gysin exact sequence is in fact the topological version of the 
Pimsner-Voiculescu exact sequence for the crossed product
$\mathscr{A}_{n,\theta}\cong 
\mathscr{A}_{n-1,\theta}\rtimes_\alpha\mathbb{Z}$ as in
Theorem 2.1(d) in \cite{kR06}.\\

\noindent\textbf{Acknowledgment.}
I would like to thank Graham Denham, George
 Elliott, Herve Oyono-Oyono, and Tim
 Steger for very helpful discussions. I would like to thank 
 N. C. Phillips for his thoughtful suggestions and for bringing 
 the possibility of generalizing Lemma 3.1 of \cite{ncp07} to my attention.
 I am grateful to Alan T. Paterson for reading the paper and 
 suggesting several helpful comments. I would also like 
 to thank Paul Milnes and the Department of Mathematics 
 of the University of Western Ontario, where some parts of
 this work were done.
 
\appendix
\addcontentsline{toc}{chapter}{Appendices}
\section{The Smith normal form}\label{smith}
\noindent The Smith normal form is a very important tool for studying integer matrices. We refer to \cite{mN72} for this interesting topic and its applications.

\begin{defi}\label{equiv}
Let $\hat\alpha,\hat\beta\in\mathrm{End}(\Bbb{Z}^m)$. We say that 
$\hat\alpha$ is equivalent to
$\hat\beta$ over $\mathbb{Z}$ (and write $\hat\alpha$ \textnormal{equiv} 
$\hat\beta$)
if there exist $\hat{u},\hat{v} \in \mathrm{Aut}(\Bbb{Z}^m)$ such that
$\hat{u} \circ \hat\alpha\circ \hat{v}=\hat\beta$. Similarly, if 
$\mathsf{A}$ and $\mathsf{B}$ are integer $m \times m$ matrices,
$\mathsf{A}$ is equivalent to $\mathsf{B}$ if there exist ${\sf U,V} 
\in\mathrm{GL}(m,\Bbb{Z})$ such that
$\sf UAV=B$.
\end{defi}
\noindent Recall that $\hat\alpha$ \textnormal{equiv} $\hat\beta$, if and only if
$\mathrm{coker}\hat\alpha\cong
\mathrm{coker}\hat\beta$, if and only if $\hat\alpha$ and $\hat\beta$ have the same 
Smith normal form.
Also, $\mathsf{A}$ equiv $\mathsf{B}$ if and only if $\mathsf{B}$ is obtainable from 
$\mathsf{A}$ by a finite number of elementary
operations. An elementary operation on an integer matrix is one of the 
following types:
interchanging two rows (or two columns), adding an integer multiple of one 
row (or column) to
another,  and multiplying a row (or column) by $-1$.\\

\noindent Now, we recall a fundamental theorem for integer matrices ({\it cf}. \cite[p. 26]{mN72}).

\begin{theo}[Smith Normal Form]\label{theo:smith}
Let $\mathsf{A}$  be an $m\times m$ integer matrix. Then 
$\mathsf{A}$ is equivalent to a diagonal matrix 
${\rm diag}(d_1,\ldots,d_r,0,\ldots,0)$, where $r$ is the rank of $\mathsf{A}$, and the integers $d_1,\ldots,d_r$ satisfy 
$d_i|d_{i+1}$ for $i=1,\ldots,r-1$.
\end{theo}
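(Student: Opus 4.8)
The plan is to give the standard algorithmic proof by elementary operations over $\Bbb Z$, using the well-ordering of the positive integers to guarantee termination. If $\mathsf A=0$ there is nothing to prove, so assume $\mathsf A\ne 0$. Among all integer matrices equivalent to $\mathsf A$, I would pick one whose least nonzero entry in absolute value is as small as possible, and by interchanging rows and columns (and multiplying a row by $-1$ if necessary) move that entry into the $(1,1)$ position, calling it $d_1>0$; relabel this matrix $\mathsf A$.

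The first key step is to show that $d_1$ divides every entry in the first row and first column of $\mathsf A$. Indeed, if $d_1\nmid a_{1j}$ for some $j$, write $a_{1j}=qd_1+s$ with $0<s<d_1$ and subtract $q$ times the first column from the $j$-th column; the resulting matrix is equivalent to $\mathsf A$ but contains the entry $s$, contradicting minimality. The same argument applies to the first column. Subtracting suitable integer multiples of the first row (resp.\ column) from the others then brings $\mathsf A$ to the block form $\mathrm{diag}(d_1)\oplus\mathsf A'$ for an $(m-1)\times(m-1)$ integer matrix $\mathsf A'$.

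Next I would show that $d_1$ divides every entry of $\mathsf A'$. If $d_1\nmid b$ for some entry $b$ of $\mathsf A'$, add the row of the full matrix containing $b$ to the first row; the first row then reads $(d_1,\ldots,b,\ldots)$, and the divisibility argument of the previous paragraph forces $d_1\mid b$, a contradiction. Hence $d_1$ divides all entries of $\mathsf A'$, and in particular it divides every entry produced by subsequent elementary operations on $\mathsf A'$. Applying the whole procedure to $\mathsf A'$ and iterating yields a diagonal matrix $\mathrm{diag}(d_1,d_2,\ldots,d_r,0,\ldots,0)$ equivalent to $\mathsf A$ with $d_i\mid d_{i+1}$ for each $i$ (for $i=1$ this is the divisibility just established, and for larger $i$ it is the same statement at the corresponding stage of the iteration). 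Finally, $r$ equals the rank of $\mathsf A$: multiplication by elements of $\mathrm{GL}(m,\Bbb Z)\subset\mathrm{GL}(m,\Bbb Q)$ preserves rank, and a diagonal matrix has rank equal to the number of its nonzero diagonal entries.

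The main obstacle is essentially bookkeeping: one must verify that the reduction genuinely terminates. This is exactly where the well-ordering principle enters — each time the divisibility hypothesis fails, the step above produces an equivalent matrix with a strictly smaller least nonzero absolute value, and since this quantity is a positive integer it cannot strictly decrease indefinitely; so after finitely many steps $d_1$ divides all entries, and the induction on the size of the matrix closes the argument.
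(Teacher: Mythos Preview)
Your argument is the standard algorithmic proof of the Smith normal form and it is correct: the global minimality of $d_1$ among all matrices equivalent to $\mathsf A$ (which exists by well-ordering) is exactly what drives the divisibility claims in both the first row/column and in the residual block $\mathsf A'$, and the induction on size then closes. One small point worth making explicit is that after clearing the first row and column to reach $\mathrm{diag}(d_1)\oplus\mathsf A'$, the entry $d_1$ is still globally minimal for this new matrix (the operations used do not introduce smaller nonzero entries), so the contradiction in your treatment of $\mathsf A'$ is legitimate.

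As for comparison with the paper: there is nothing to compare. The paper does not prove Theorem~\ref{theo:smith}; it simply records it in the appendix as a classical fact, with a reference to Newman's \emph{Integral matrices} (\cite[p.~26]{mN72}). Your write-up therefore supplies what the paper deliberately omits, and the approach you chose is the same one found in Newman and in most standard references.
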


\section{Nilpotent and unipotent linear mappings}\label{nilpotent}
\begin{defi}
Let $V$ be a (complex) vector space. A mapping ${\hat\epsilon}\in 
\mathrm{End}_{\mathbb{C}}V$
is called nilpotent (respectively, unipotent) if ${\hat\epsilon}^k=0$ 
(respectively,
$({\hat\epsilon}-\mathrm{id})^k=0$) for some positive integer $k$. The minimum value 
of $k$ with this property is
called the \textit{degree} of ${\hat\epsilon}$, denoted $\deg({\hat\epsilon})$.
\end{defi}

\noindent As an example, every upper (respectively, lower) triangular matrix with zeros on the 
diagonal is nilpotent.
Also, the matrix $\mathsf{S}_n$ defined in Section \ref{sec:general} is a unipotent matrix of degree $n$. 
Note that all eigenvalues of
a nilpotent (respectively, unipotent) matrix are zeros (respectively, ones). In particular, 
every unipotent matrix is invertible, and every unipotent endomorphism is an automorphism.

\begin{coro}
Let $V$ be a finite dimensional complex vector space and ${\hat\epsilon}$ be 
a nilpotent (respectively, unipotent) endomorphism of $V$.
Then $\deg({\hat\epsilon})$ is equal to the maximum order of its Jordan 
blocks.
\end{coro}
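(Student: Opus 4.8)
The plan is to reduce the statement about an arbitrary nilpotent (or unipotent) endomorphism $\hat\epsilon$ of a finite-dimensional complex vector space $V$ to the well-known structure of a single Jordan block, using the Jordan canonical form. First I would handle the unipotent case by the translation $\hat\epsilon \mapsto \hat\epsilon - \mathrm{id}$: if $\hat\epsilon$ is unipotent of degree $k$, then $\hat\nu := \hat\epsilon - \mathrm{id}$ is nilpotent with $\hat\nu^k = 0$ and $\hat\nu^{k-1}\neq 0$, i.e. $\deg(\hat\epsilon) = \deg(\hat\nu)$; moreover, a Jordan block of $\hat\epsilon$ of size $m$ (eigenvalue $1$, since all eigenvalues of a unipotent map are $1$) corresponds exactly to a nilpotent Jordan block of $\hat\nu$ of the same size $m$. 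Hence the unipotent case follows verbatim from the nilpotent case, and it suffices to prove: for a nilpotent $\hat\epsilon \in \mathrm{End}_{\mathbb C}(V)$, $\deg(\hat\epsilon)$ equals the maximum size among its Jordan blocks.

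Next I would invoke the Jordan canonical form theorem: since $\mathbb{C}$ is algebraically closed and every eigenvalue of a nilpotent endomorphism is $0$, there is a basis in which $\hat\epsilon$ is block-diagonal, $\hat\epsilon \sim N_{m_1} \oplus \cdots \oplus N_{m_s}$, where $N_m$ denotes the $m\times m$ nilpotent Jordan block (ones on the superdiagonal, zeros elsewhere). The degree is a similarity invariant, and for a block-diagonal map one has $(\hat\epsilon)^\ell \sim N_{m_1}^\ell \oplus \cdots \oplus N_{m_s}^\ell$, so $(\hat\epsilon)^\ell = 0$ if and only if $N_{m_i}^\ell = 0$ for every $i$. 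Thus $\deg(\hat\epsilon) = \max_i \deg(N_{m_i})$, and the whole statement comes down to the single-block computation $\deg(N_m) = m$.

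The single-block fact is an elementary and standard calculation: $N_m$ acts on the standard basis $v_1,\ldots,v_m$ by $N_m v_j = v_{j-1}$ (with $v_0 := 0$), so $N_m^\ell v_j = v_{j-\ell}$ when $j > \ell$ and $0$ otherwise. Consequently $N_m^{m-1} v_m = v_1 \neq 0$ while $N_m^m = 0$, giving $\deg(N_m) = m$. (This is exactly the same bookkeeping already used in the excerpt for $\hat\sigma$ and $\hat\varphi$ in Section~\ref{sec:comb}.) I would then assemble the pieces: $\deg(\hat\epsilon) = \max_i m_i$ = maximum order of a Jordan block, and the unipotent case is the translate of this.

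I do not expect a genuine obstacle here — the result is essentially a repackaging of Jordan form — so the only thing to be careful about is bookkeeping: making sure the degree is correctly identified as a similarity invariant (which is immediate, since $(\hat u\hat\epsilon\hat u^{-1})^\ell = \hat u\hat\epsilon^\ell\hat u^{-1}$), and being explicit that for a direct sum of maps the power vanishes iff each summand's power vanishes, so that the maximum (not, say, the sum) of the block sizes is what controls the degree. If one wanted to avoid citing Jordan form over $\mathbb C$ altogether, an alternative is to argue directly: the sequence of subspaces $\ker\hat\epsilon \subsetneq \ker\hat\epsilon^2 \subsetneq \cdots$ stabilizes precisely at step $\deg(\hat\epsilon)$, and the length of the longest Jordan chain (a vector $v$ with $\hat\epsilon^{m-1}v\neq 0$) realizes this length; but routing through the canonical form is cleaner and entirely sufficient for our purposes.
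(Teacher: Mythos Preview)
Your proposal is correct and follows essentially the same route as the paper's proof: reduce the unipotent case to the nilpotent one, invoke the Jordan normal form (all eigenvalues being $0$, so every block is a nilpotent Jordan block), and use that a single $m\times m$ nilpotent Jordan block has degree exactly $m$. The only difference is that you spell out the intermediate bookkeeping (similarity invariance of degree, the $\max$ over block sizes, the explicit single-block computation) that the paper leaves implicit under ``the rest of proof is clear.''
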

\begin{proof}
It suffices to prove the statement for the nilpotent case. Since all the 
eigenvalues of ${\hat\epsilon}$ are zero, each Jordan
block is a zero matrix of order one or is of the form
$$\left(\begin{array}{ccccc}
0 & 1 & 0 & \cdots& 0\\
0 & 0 & 1 & &\vdots\\
0 &\ddots&\ddots&\ddots&0\\
\vdots&   & 0 & 0 & 1\\
0 &\cdots  & 0 & 0 & 0
\end{array}
\right)$$
which is a nilpotent matrix and its degree is the same as its order,
which is greater than 1. The rest of proof is clear.
\end{proof}

\begin{defi}
Let $V$ be a finite dimensional complex vector space and
${\hat\epsilon}\in \mathrm{End}_{\mathbb{C}}V$ be nilpotent (respectively, 
unipotent).
We say that ${\hat\epsilon}$ is \textit{of maximal degree} if 
$\deg({\hat\epsilon})=\dim V$.
\end{defi}

\begin{coro}\label{coro:similar}
Let $V$ be an \text{$n$-dimensional} complex vector space and
${\hat\epsilon}\in \mathrm{End}_{\mathbb{C}}V$ be nilpotent (respectively, 
unipotent).
Then $\deg({\hat\epsilon})\leq n$. If $\deg({\hat\epsilon})=n$,
then the Jordan normal form of ${\hat\epsilon}$ is the full Jordan block of 
order $n$ with $0$'s on the diagonal. In particular, all nilpotent (respectively, unipotent) matrices of maximal degree 
acting on $V$ are similar.
\end{coro}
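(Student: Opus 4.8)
The plan is to derive everything from the preceding corollary, which identifies $\deg({\hat\epsilon})$ with the largest order occurring among the Jordan blocks of ${\hat\epsilon}$. First I would dispose of the unipotent case by the usual reduction: ${\hat\epsilon}$ is unipotent of degree $k$ precisely when ${\hat\epsilon}-\mathrm{id}$ is nilpotent of degree $k$, and the Jordan normal form of ${\hat\epsilon}$ is obtained from that of ${\hat\epsilon}-\mathrm{id}$ by adding $1$ to each diagonal entry; so it is enough to treat the nilpotent case, and the notion of ``degree'' matches on both sides.

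So assume ${\hat\epsilon}$ is nilpotent. Write its Jordan decomposition as a direct sum of Jordan blocks $J_{k_1},\ldots,J_{k_s}$; since every eigenvalue of a nilpotent map is $0$, each $J_{k_i}$ has $0$'s on the diagonal, and the block orders satisfy $k_1+\cdots+k_s=\dim V=n$. By the preceding corollary, $\deg({\hat\epsilon})=\max_i k_i\leq k_1+\cdots+k_s=n$, which is the first assertion. If in addition $\deg({\hat\epsilon})=n$, then $\max_i k_i=n$ together with $\sum_i k_i=n$ forces $s=1$ and $k_1=n$; that is, the Jordan normal form of ${\hat\epsilon}$ is the single full Jordan block of order $n$ with $0$'s on the diagonal.

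For the last sentence I would invoke uniqueness of the Jordan normal form up to reordering of blocks: any two nilpotent endomorphisms of $V$ that are both of maximal degree have the same Jordan normal form, namely the full nilpotent block of order $n$, hence are similar. For the unipotent case the same conclusion follows either by passing to ${\hat\epsilon}-\mathrm{id}$ or by observing directly that every maximal-degree unipotent endomorphism of $V$ has Jordan form equal to the full Jordan block of order $n$ with $1$'s on the diagonal.

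I do not expect any genuine obstacle here; the only points needing a little care are the bookkeeping in the reduction from the unipotent to the nilpotent case (keeping the two meanings of ``degree'' consistent) and the clean appeal to the preceding corollary, which is what carries the actual content.
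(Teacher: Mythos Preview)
Your proposal is correct and follows the same approach as the paper, which simply says ``Use the preceding corollary and the Jordan normal form theorem.'' You have spelled out the details that the paper leaves implicit, including the reduction of the unipotent case to the nilpotent one.
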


\begin{proof}
Use the preceding corollary and the Jordan normal form theorem.
\end{proof}

\begin{example}\label{furst-matrix}
Let $\mathsf{B}=[b_{ij}]_{n \times n}$ be any upper triangular matrix, whose 
diagonal elements
are all zeros (respectively, ones), and whose entries $b_{i,i+1}$ for $i=1,\ldots,n-1$ 
are all nonzero.
Then $\mathsf{B}$ is nilpotent (respectively, unipotent) of maximal degree. In fact, 
let $n$ be the order of $\mathsf{B}$
and let $b:=\prod_{i=1}^{n-1} b_{i,i+1}$, which is a nonzero number. Then 
one can
easily see that $\mathsf{B}^n=0$ (respectively, $(\mathsf{B}-\mathsf{I})^n=0$) and 
$\mathsf{B}^{n-1}$ (respectively, $(\mathsf{B}-\mathsf{I})^{n-1}$)
is a matrix with $b$ appearing on the upper-right corner and zeros elsewhere.
So $\deg(\mathsf{B})=n$, i.e. $\mathsf{B}$ is of maximal degree.
\end{example}

\begin{lemm}\label{lemm:3}
Let $V$ be a complex vector space and ${\hat\epsilon}\in 
\mathrm{End}_{\mathbb{C}} V$
be nilpotent of degree $k$. Then
$\exp({\hat\epsilon})$ is unipotent of degree $k$. Moreover, 
$\exp({\hat\epsilon})-\mathrm{id}$ is similar to ${\hat\epsilon}$.
\end{lemm}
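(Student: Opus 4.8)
The plan is to use the nilpotency of $\hat\epsilon$ to turn $\exp(\hat\epsilon)$ into a finite sum and factor $\exp(\hat\epsilon)-\mathrm{id}$ as $\hat\epsilon$ composed with a polynomial automorphism of $\hat\epsilon$, and then read off both assertions from that factorization. First, since $\hat\epsilon^k=0$ the exponential series terminates, so $\exp(\hat\epsilon)=\mathrm{id}+\hat\epsilon+\tfrac{1}{2!}\hat\epsilon^2+\cdots+\tfrac{1}{(k-1)!}\hat\epsilon^{\,k-1}$ and hence $\exp(\hat\epsilon)-\mathrm{id}=\hat\epsilon\,g(\hat\epsilon)$, where $g(\hat\epsilon):=\mathrm{id}+\tfrac{1}{2!}\hat\epsilon+\cdots+\tfrac{1}{(k-1)!}\hat\epsilon^{\,k-2}$ is a polynomial in $\hat\epsilon$ with constant term $\mathrm{id}$; because $\hat\epsilon$ is nilpotent, $g(\hat\epsilon)-\mathrm{id}$ is nilpotent, so $g(\hat\epsilon)$ is a (unipotent) automorphism of $V$ commuting with $\hat\epsilon$. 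Since $\hat\epsilon$ and $g(\hat\epsilon)$ commute, for every $m\ge 1$ one gets $(\exp(\hat\epsilon)-\mathrm{id})^m=\hat\epsilon^{\,m}g(\hat\epsilon)^m$, and as $g(\hat\epsilon)^m$ is invertible this vanishes precisely when $\hat\epsilon^{\,m}$ does. Therefore $\exp(\hat\epsilon)-\mathrm{id}$ is nilpotent of degree exactly $k$, i.e. $\exp(\hat\epsilon)$ is unipotent of degree $k$, which is the first claim.

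For the ``moreover'', the same identity $(\exp(\hat\epsilon)-\mathrm{id})^m=\hat\epsilon^{\,m}g(\hat\epsilon)^m$ with $g(\hat\epsilon)^m$ invertible shows that $\ker\big((\exp(\hat\epsilon)-\mathrm{id})^m\big)=\ker(\hat\epsilon^{\,m})$ for all $m\ge 0$; two nilpotent endomorphisms with coinciding kernel filtrations have the same Jordan type and are therefore similar, by the Jordan normal form theorem (already invoked for Corollary \ref{coro:similar}). To exhibit the conjugating map concretely I would decompose $V$ into $\hat\epsilon$-cyclic subspaces; each such block is automatically invariant under the polynomial $g(\hat\epsilon)$, hence under $\exp(\hat\epsilon)-\mathrm{id}$, and on a cyclic block with cyclic vector $v$ the iterates $v,(\exp(\hat\epsilon)-\mathrm{id})v,(\exp(\hat\epsilon)-\mathrm{id})^2 v,\dots$ are obtained from $v,\hat\epsilon v,\hat\epsilon^2 v,\dots$ by a change of basis that is triangular with $1$'s on the diagonal, so the restriction of $\exp(\hat\epsilon)-\mathrm{id}$ to that block is again a single Jordan block of the same size. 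Assembling the blocks gives an explicit automorphism conjugating $\exp(\hat\epsilon)-\mathrm{id}$ to $\hat\epsilon$.

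The only step needing care is this last one — promoting the equality of kernel filtrations to an actual similarity — but in every application of the lemma $V$ is finite dimensional, where this is precisely the Jordan normal form theorem; the cyclic-block description above both makes the conjugating operator explicit and takes care of the inessential case of infinitely many Jordan blocks, which are all of size at most $k$.
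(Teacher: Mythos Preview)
Your argument is correct and essentially matches the paper's own proof. The first part is identical: the paper also writes $\exp(\hat\epsilon)-\mathrm{id}=\hat\epsilon\,\omega$ with $\omega$ a unipotent (hence invertible) polynomial in $\hat\epsilon$, and uses $(\hat\epsilon\,\omega)^r=\hat\epsilon^r\omega^r$ to match the degrees. For the ``moreover'', the paper reduces via the Jordan normal form to a single Jordan block and then invokes Corollary~\ref{coro:similar}, which is exactly your cyclic-block argument; your kernel-filtration phrasing is just an equivalent way to read off the Jordan type, and your remark about the infinite-dimensional case (bounded nilpotent operators still decomposing into cyclic blocks) is a small refinement the paper leaves implicit.
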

\begin{proof}
For the first part, we know that
$\exp(\hat\epsilon)-\mathrm{id}={\hat\epsilon}+{\hat\epsilon}^2/2!+\ldots+
{\hat\epsilon}^{k-1}/(k-1)!={\hat\epsilon}\omega$,
where $\omega:=\mathrm{id}+{\hat\epsilon}/2!+\ldots
+{\hat\epsilon}^{k-2}/(k-1)!$ commutes with ${\hat\epsilon}$ and is 
invertible since it is unipotent. So, $(\exp(\hat\epsilon)-\mathrm{id})^r
=({\hat\epsilon}\omega)^r={\hat\epsilon}^r \omega^r$ for all positive 
integers $r$. Thus $\exp({\hat\epsilon})-\mathrm{id}$ is unipotent with
the same degree of ${\hat\epsilon}$. For the second part, using the Jordan 
normal form of ${\hat\epsilon}$, it is sufficient
to prove the statement for the special case when ${\hat\epsilon}$ is a 
Jordan block with zeros on the diagonal. Since in this
case ${\hat\epsilon}$ is of maximal degree, by the first part, 
$\exp({\hat\epsilon})-\mathrm{id}$ is also of maximal degree. Therefore they are
similar by Corollary \ref{coro:similar}.
\end{proof}

\section{Endomorphisms and derivations of exterior algebras}\label{endomorphism}
\noindent We refer to the Chapter 5 of \cite{wG78} for general properties of exterior algebras and
mappings between them.\\

\noindent Let $V$ be a (complex) vector space and ${\hat\phi}:V \rightarrow 
V$ be a linear mapping. Then ${\hat\phi}$ can be extended in a unique way to a 
homomorphism
$\wedge^{*}{\hat\phi}:\Lambda^{*}V \rightarrow \Lambda^{*}V$ such that 
$\wedge^{*}{\hat\phi}(1)=1$, yielding
$$\wedge^{*}{\hat\phi}(x_1 \wedge\ldots\wedge x_p)={\hat\phi}(x_1) 
\wedge\ldots\wedge {\hat\phi}(x_p),\hspace{.5in}(x_i \in V).$$
Also, ${\hat\phi}$ can be extended in a unique way to a derivation 
$\mathcal{D}^{\hspace{1pt}*}{\hat\phi}:\Lambda^{*}V \rightarrow
\Lambda^{*}V$, yielding 
$$\mathcal{D}^{\hspace{1pt}*}{\hat\phi}(x_1 
\wedge\ldots\wedge x_p)=\sum_{i=1}^{p}x_1 \wedge\ldots
\wedge{\hat\phi}(x_i)\wedge\ldots\wedge x_p\hspace{.5in}(p \geq 2, x_i \in V).$$
We define $\wedge^{r}{\hat\phi}:=\wedge^{*}{\hat\phi}|_{\Lambda^{r}V}$ 
and $\mathcal{D}^{\hspace{1pt}r} 
{\hat\phi}:=\mathcal{D}^{\hspace{1pt}*}{\hat\phi}|_{\Lambda^{r}V}$
as induced linear mappings on the $r$-th exterior power of $V$ for $r \geq 0$. 
Then we have
$$\wedge^{*}{\hat\phi}=\bigoplus_{r \geq 0}\wedge^{r}{\hat\phi}\,,
\hspace{.4in}\mathcal{D}^{\hspace{1pt}*}{\hat\phi}
=\bigoplus_{r \geq 0}\mathcal{D}^{\hspace{1pt}r}{\hat\phi}\,.$$

\noindent One can easily show that $\wedge^*(\hat\phi\circ\hat\psi)=(\wedge^*\hat\phi)\circ(\wedge^*\hat\psi)$
and $\mathcal{D}^{\hspace{1pt}*}([\hat\phi,\hat\psi])=[\mathcal{D}^{\hspace{1pt}*}{\hat\phi},\mathcal{D}^{\hspace{1pt}*}{\hat\phi}]$ ({\it cf}. equations (5.20) and (5.25) in \cite{wG78}).
\begin{lemm}
With the above notation, if ${\hat\phi}:V \rightarrow V$ is nilpotent, 
then $\wedge^{r}{\hat\phi}$ and
$\mathcal{D}^{\hspace{1pt}r}{\hat\phi}$ are also nilpotent for $r \geq 1$. 
If $V$ is finite dimensional, then $\mathcal{D}^{\hspace{1pt}*}{\hat\phi}$ is nilpotent.
\end{lemm}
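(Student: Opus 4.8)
The plan is to treat the three assertions in turn, using only the composition identity $\wedge^{*}(\hat\phi\circ\hat\psi)=(\wedge^{*}\hat\phi)\circ(\wedge^{*}\hat\psi)$ recorded above together with a direct combinatorial expansion of the iterated derivation. For $\wedge^{r}\hat\phi$, suppose $\hat\phi^{k}=0$. Applying the composition identity $k-1$ times with $\hat\psi=\hat\phi$ and restricting to $\Lambda^{r}V$ yields $(\wedge^{r}\hat\phi)^{k}=\wedge^{r}(\hat\phi^{k})=\wedge^{r}(0)$; but for $r\ge 1$ the endomorphism $\wedge^{r}(0)$ sends a decomposable element $x_{1}\wedge\cdots\wedge x_{r}$ to $0(x_{1})\wedge\cdots\wedge 0(x_{r})=0$, so $\wedge^{r}(0)=0$ and hence $\wedge^{r}\hat\phi$ is nilpotent of degree at most $k$.

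For $\mathcal{D}^{\hspace{1pt}r}\hat\phi$, the key step is to establish by induction on $N$ the multinomial formula
\[
(\mathcal{D}^{\hspace{1pt}r}\hat\phi)^{N}(x_{1}\wedge\cdots\wedge x_{r})
=\sum_{\substack{a_{1}+\cdots+a_{r}=N\\ a_{i}\ge 0}}\frac{N!}{a_{1}!\cdots a_{r}!}\;\hat\phi^{a_{1}}(x_{1})\wedge\cdots\wedge\hat\phi^{a_{r}}(x_{r}),
\]
the case $N=1$ being exactly the defining formula of the derivation $\mathcal{D}^{\hspace{1pt}r}\hat\phi$, and the inductive step reducing to the usual Pascal-type recursion for multinomial coefficients (applying $\mathcal{D}^{\hspace{1pt}r}\hat\phi$ to a summand inserts one more $\hat\phi$ into exactly one of the $r$ slots). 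Now take $N:=r(k-1)+1$. For every multi-index with $a_{1}+\cdots+a_{r}=N$ one has $\max_{i}a_{i}\ge\lceil N/r\rceil=k$, so each summand carries a factor $\hat\phi^{a_{i}}(x_{i})=0$ and therefore vanishes. Hence $(\mathcal{D}^{\hspace{1pt}r}\hat\phi)^{N}$ vanishes on decomposable elements, and since these span $\Lambda^{r}V$ we conclude $(\mathcal{D}^{\hspace{1pt}r}\hat\phi)^{N}=0$; thus $\mathcal{D}^{\hspace{1pt}r}\hat\phi$ is nilpotent of degree at most $r(k-1)+1$.

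Finally, when $V$ is finite dimensional, say $\dim V=n$, we have $\Lambda^{r}V=0$ for $r>n$ and $\mathcal{D}^{\hspace{1pt}0}\hat\phi=0$, so $\mathcal{D}^{\hspace{1pt}*}\hat\phi=\bigoplus_{r=1}^{n}\mathcal{D}^{\hspace{1pt}r}\hat\phi$ and, by the previous step, $(\mathcal{D}^{\hspace{1pt}*}\hat\phi)^{N}=\bigoplus_{r=1}^{n}(\mathcal{D}^{\hspace{1pt}r}\hat\phi)^{N}=0$ as soon as $N\ge n(k-1)+1$. I do not anticipate a serious difficulty here; the only points meriting care are the induction establishing the multinomial formula (carried out on decomposable elements, which suffices since they span $\Lambda^{r}V$) and the remark that finite dimensionality is genuinely needed for the last clause — the nilpotency degree bound $r(k-1)+1$ grows without bound in $r$, so for infinite-dimensional $V$ the derivation $\mathcal{D}^{\hspace{1pt}*}\hat\phi$ generally fails to be nilpotent even though every $\mathcal{D}^{\hspace{1pt}r}\hat\phi$ is.
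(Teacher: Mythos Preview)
Your proof is correct and follows essentially the same route as the paper's: the composition identity for $\wedge^{r}$, the multinomial expansion of $(\mathcal{D}^{\hspace{1pt}r}\hat\phi)^{N}$ on decomposable tensors, and then the direct-sum argument in the finite-dimensional case. Your nilpotency bound $r(k-1)+1$ for $\mathcal{D}^{\hspace{1pt}r}\hat\phi$ is slightly sharper than the paper's $rk$, and your closing remark on why finite dimensionality is genuinely needed is a nice addition, but structurally the arguments coincide.
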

\begin{proof}
Assume that ${\hat\phi}^t=0$ for some $t \in \mathbb{N}$. We have
$(\wedge^r {\hat\phi})^t(x_1 \wedge\ldots\wedge 
x_r)={\hat\phi}^t(x_1)\wedge\ldots\wedge {\hat\phi}^t(x_r)=0$.
So, $(\wedge^r {\hat\phi})^t=0$, which means that $\wedge^r {\hat\phi}$ is nilpotent. For 
$\mathcal{D}^{\hspace{1pt}r}{\hat\phi}$, we know that
$\mathcal{D}^{\hspace{1pt}r}{\hat\phi}(x_1 \wedge\ldots\wedge 
x_r)=\sum_{i=1}^{r}x_1 \wedge\ldots
\wedge{\hat\phi}(x_i)\wedge\ldots\wedge x_r$ and one can easily deduce that
$$\mathcal{D}^{\hspace{1pt}r}{\hat\phi}^p(x_1 \wedge\ldots\wedge 
x_r)=\underset{(i_j \geq 0)}{\sum_{i_1+\ldots+i_r=p}}
\frac{p!}{(i_1)!\ldots(i_r)!}~{\hat\phi}^{i_1}x_1 
\wedge\ldots\wedge{\hat\phi}^{i_r}x_r\,.$$
Now since $i_1+\ldots+i_r=p$, there exists an $i_j$ with $i_j \geq p/r$. So, 
if $p \geq r t$ then ${\hat\phi}^{i_j}=0$ and $\mathcal{D}^{\hspace{1pt}r}{\hat\phi}^p=0$. Thus
$\mathcal{D}^{\hspace{1pt}r}{\hat\phi}$ is nilpotent.\\

\noindent For the next part, let $m:=\dim V$. Since 
$\mathcal{D}^{\hspace{1pt}*}{\hat\phi}=\bigoplus_{r \geq 0}^{m}
\mathcal{D}^{\hspace{1pt}r}{\hat\phi}$ and ${\hat\phi}_0=0$, from the first 
part we have $(\mathcal{D}^{\hspace{1pt}*}{\hat\phi})^{mt}=0$, hence
$\mathcal{D}^{\hspace{1pt}*}{\hat\phi}$ is nilpotent, too.
  \end{proof}

  \begin{lemm}
  Let ${\hat\phi}:V \rightarrow V$ be a nilpotent linear mapping. Then
  $\exp(\mathcal{D}^{\hspace{1pt}*}{\hat\phi})=\wedge^{*} \exp({\hat\phi})$
  on $\Lambda^{*}V$.
  \end{lemm}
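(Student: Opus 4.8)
The plan is to identify both $\exp(\mathcal{D}^{\hspace{1pt}*}{\hat\phi})$ and $\wedge^{*}\exp({\hat\phi})$ as unital algebra endomorphisms of the exterior algebra $\Lambda^{*}V$, and then to observe that they take the same values on the subspace $V=\Lambda^{1}V$. Since $\Lambda^{*}V$ is generated as a unital algebra by $V$, and a unital algebra homomorphism is determined by its restriction to a generating set, this equality on $V$ forces the two maps to coincide on all of $\Lambda^{*}V$.

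First I would check that the two maps make sense and are algebra homomorphisms. Since ${\hat\phi}$ is nilpotent, $\exp({\hat\phi})$ is a finite polynomial in ${\hat\phi}$, and $\wedge^{*}\exp({\hat\phi})$ is an algebra endomorphism of $\Lambda^{*}V$ directly from the defining property of $\wedge^{*}$. For the other side, the preceding lemma shows each $\mathcal{D}^{\hspace{1pt}r}{\hat\phi}$ is nilpotent, so the exponential series terminates on each graded piece and $\exp(\mathcal{D}^{\hspace{1pt}*}{\hat\phi})$ is a well-defined linear endomorphism; it is unital because $\mathcal{D}^{\hspace{1pt}*}{\hat\phi}(1)=0$. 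The one substantial point — and the step I expect to be the main obstacle — is showing that $\exp(\mathcal{D}^{\hspace{1pt}*}{\hat\phi})$ is multiplicative. Here I would use the general fact that the exponential of a (locally nilpotent) derivation $D$ of an associative algebra is an algebra automorphism: iterating the Leibniz rule gives $D^{n}(\xi\wedge\eta)=\sum_{k=0}^{n}\binom{n}{k}D^{k}(\xi)\wedge D^{n-k}(\eta)$, and then dividing by $n!$, summing over $n$, and reindexing the double sum by $j=n-k$ yields $\exp(D)(\xi\wedge\eta)=\exp(D)(\xi)\wedge\exp(D)(\eta)$, all series being finite. Applying this with $D=\mathcal{D}^{\hspace{1pt}*}{\hat\phi}$, which is a derivation of $\Lambda^{*}V$ by its very construction, gives the claim.

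Finally I would compare the two endomorphisms on $V$. Both preserve the grading $\Lambda^{*}V=\bigoplus_{r\geq0}\Lambda^{r}V$, since $\wedge^{*}{\hat\psi}=\bigoplus_{r}\wedge^{r}{\hat\psi}$ and $\mathcal{D}^{\hspace{1pt}*}{\hat\phi}=\bigoplus_{r}\mathcal{D}^{\hspace{1pt}r}{\hat\phi}$. On the degree-one piece $\Lambda^{1}V=V$ we have $\wedge^{1}\exp({\hat\phi})=\exp({\hat\phi})$ because $\wedge^{1}{\hat\psi}={\hat\psi}$ for every linear ${\hat\psi}$, and similarly $\exp(\mathcal{D}^{\hspace{1pt}*}{\hat\phi})|_{V}=\exp(\mathcal{D}^{\hspace{1pt}1}{\hat\phi})=\exp({\hat\phi})$ because $\mathcal{D}^{\hspace{1pt}1}{\hat\phi}={\hat\phi}$. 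Thus the two unital algebra endomorphisms agree on the generating set $V$, hence everywhere, which is exactly $\exp(\mathcal{D}^{\hspace{1pt}*}{\hat\phi})=\wedge^{*}\exp({\hat\phi})$. (Alternatively one could run a one-parameter group argument: $t\mapsto\wedge^{*}\exp(t{\hat\phi})$ is a one-parameter group of automorphisms whose infinitesimal generator is $\mathcal{D}^{\hspace{1pt}*}{\hat\phi}$, and uniqueness then identifies it with $t\mapsto\exp(t\,\mathcal{D}^{\hspace{1pt}*}{\hat\phi})$, the case $t=1$ being the lemma; but the generators argument is cleaner and entirely algebraic.)
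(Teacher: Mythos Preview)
Your proof is correct and takes a genuinely different route from the paper. The paper argues by direct computation: it uses the multinomial expansion $(\mathcal{D}^{\hspace{1pt}r}{\hat\phi})^{p}(x_{1}\wedge\cdots\wedge x_{r})=\sum_{i_{1}+\cdots+i_{r}=p}\frac{p!}{i_{1}!\cdots i_{r}!}\,{\hat\phi}^{i_{1}}x_{1}\wedge\cdots\wedge{\hat\phi}^{i_{r}}x_{r}$ established in the preceding lemma, divides by $p!$, sums over $p$, and factors the resulting sum over all $(i_{1},\ldots,i_{r})$ as a wedge of $r$ independent exponential series. Your argument instead identifies both sides as unital algebra endomorphisms agreeing on the generating set $V$; the key step is the standard fact that the exponential of a degree-zero locally nilpotent derivation is multiplicative, which you justify via the iterated Leibniz rule. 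Your approach is more structural and reusable, and it avoids the explicit multinomial bookkeeping; the paper's computation is more hands-on but makes the equality visible term by term. The one-parameter group alternative you mention at the end is exactly what the paper records in the remark following its proof.
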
 
  \begin{proof}
  We have
  {\allowdisplaybreaks
  \begin{align*}
\exp(\mathcal{D}^{\hspace{1pt}*}{\hat\phi})(x_1 \wedge\ldots\wedge 
x_r)&=\sum_{p \geq 0}
  \frac{1}{p!}\mathcal{D}^{\hspace{1pt}r}{\hat\phi}^p(x_1 \wedge\ldots\wedge 
x_r)\\
  &=\sum_{p \geq0}\frac{1}{p!}(\underset{(i_j \geq 
0)}{\sum_{i_1+\ldots+i_r=p}}
\frac{p!}{(i_1)!\ldots(i_r)!}~{\hat\phi}^{i_1}x_1 
\wedge\ldots\wedge{\hat\phi}^{i_r}x_r)\\
&=\sum_{i_j \geq 0}\frac{1}{(i_1)!\ldots(i_r)!}~{\hat\phi}^{i_1}x_1 
\wedge\ldots\wedge{\hat\phi}^{i_r}x_r\\
&=(\sum_{i_1 \geq 0}\frac{1}{(i_1)!}{\hat\phi}^{i_1}x_1)\wedge\ldots\wedge
(\sum_{i_r \geq 0}\frac{1}{(i_r)!}{\hat\phi}^{i_r}x_r)\\
&=(\exp({\hat\phi}) x_1)\wedge\ldots\wedge(\exp({\hat\phi}) x_r)\\
&=\wedge^{*} \exp({\hat\phi})(x_1 \wedge\ldots\wedge x_r),
   \end{align*}}
   \linebreak
  which yields the result. Note that all sums in the above equalities are 
finite according to the
  previous lemma.
   \end{proof}
   \begin{rema}
   The nilpotency of ${\hat\phi}$ is not necessary in the preceding lemma. 
In fact, one may
   use the definition of
   $\exp:\mathfrak{gl}(\Lambda^*V)\rightarrow\mathrm{GL}(\Lambda^*V)$. More 
precisely,
   define $s:\mathbb{R}\rightarrow\mathrm{GL}(\Lambda^*V)$ by
   $s(t)=\wedge^* \exp({t {\hat\phi}})$. Then one may check that $s$ is the 
1-parameter
   subgroup generated by $\mathcal{D}^{\hspace{1pt}*}{\hat\phi}$ (i.e. 
$\dot{s}(0)=\mathcal{D}^{\hspace{1pt}*}{\hat\phi}$), and we have $s(1)=\wedge^{*} \exp({\hat\phi})$.
   \end{rema}
   \begin{coro}\label{sim}
Let ${\hat\phi}:V \rightarrow V$ be a nilpotent linear mapping, and set ${\hat\epsilon}:={\hat\phi}+\mathrm{id}$.
Then $\wedge^{r} {\hat\epsilon}-\mathrm{id}$ is similar to $\mathcal{D}^{\hspace{1pt}r}{\hat\phi}$ for
  $r \geq 0$.
	\end{coro}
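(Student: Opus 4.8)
The plan is to reduce the statement to the previous two lemmas. Since $\hat\phi$ is nilpotent, the Lemma immediately before Corollary \ref{sim} gives $\exp(\mathcal{D}^{\hspace{1pt}*}\hat\phi)=\wedge^*\exp(\hat\phi)$ on $\Lambda^*V$. Restricting both sides to the subspace $\Lambda^rV$ (which is invariant under both operators, since $\mathcal{D}^{\hspace{1pt}*}\hat\phi$ and $\wedge^*\hat\phi$ are graded), we obtain
$$\exp(\mathcal{D}^{\hspace{1pt}r}\hat\phi)=\wedge^r\exp(\hat\phi)$$
for each $r\geq 0$. The idea is now to recognize the right-hand side as $\wedge^r\hat\epsilon$ up to the sort of adjustment handled by Lemma \ref{lemm:3}, and to use that lemma's ``$\exp$ of a nilpotent minus the identity is similar to the nilpotent'' statement on the exterior power.

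First I would note that $\mathcal{D}^{\hspace{1pt}r}\hat\phi$ is nilpotent: this is exactly the content of the first Lemma in Appendix \ref{endomorphism}, applied to the nilpotent map $\hat\phi$. Therefore Lemma \ref{lemm:3}, applied with $\hat\epsilon$ there taken to be $\mathcal{D}^{\hspace{1pt}r}\hat\phi$, tells us that $\exp(\mathcal{D}^{\hspace{1pt}r}\hat\phi)-\mathrm{id}$ is similar to $\mathcal{D}^{\hspace{1pt}r}\hat\phi$. Combining this with the displayed identity above, it remains to identify $\exp(\mathcal{D}^{\hspace{1pt}r}\hat\phi)=\wedge^r\exp(\hat\phi)$ with $\wedge^r\hat\epsilon$. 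But $\hat\epsilon:=\hat\phi+\mathrm{id}$ is in general \emph{not} equal to $\exp(\hat\phi)$, so a direct identification fails; instead I would argue at the level of similarity. By Lemma \ref{lemm:3} again (this time for $\hat\phi$ itself on $V$), $\exp(\hat\phi)-\mathrm{id}$ is similar to $\hat\phi$; equivalently $\exp(\hat\phi)$ and $\hat\phi+\mathrm{id}=\hat\epsilon$ are similar unipotent maps on $V$ with the same Jordan type. Hence their $r$-th exterior powers $\wedge^r\exp(\hat\phi)$ and $\wedge^r\hat\epsilon$ are similar on $\Lambda^rV$ (using the functoriality identity $\wedge^r(\hat u\hat\phi\hat u^{-1})=(\wedge^r\hat u)(\wedge^r\hat\phi)(\wedge^r\hat u)^{-1}$ recorded in this appendix), and therefore $\wedge^r\exp(\hat\phi)-\mathrm{id}$ is similar to $\wedge^r\hat\epsilon-\mathrm{id}$.

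Stringing the similarities together yields
$$\wedge^r\hat\epsilon-\mathrm{id}\ \sim\ \wedge^r\exp(\hat\phi)-\mathrm{id}=\exp(\mathcal{D}^{\hspace{1pt}r}\hat\phi)-\mathrm{id}\ \sim\ \mathcal{D}^{\hspace{1pt}r}\hat\phi,$$
which is the claim. The cases $r=0$ (both sides are $0$) and $r=1$ (where $\mathcal{D}^{\hspace{1pt}1}\hat\phi=\hat\phi$ and $\wedge^1\hat\epsilon-\mathrm{id}=\hat\epsilon-\mathrm{id}=\hat\phi$, so they are literally equal) I would mention as trivial checks. The only mild subtlety — the ``main obstacle,'' such as it is — is the point that one must not expect $\wedge^r\exp(\hat\phi)=\wedge^r\hat\epsilon$ on the nose; the argument has to pass through similarity of $\exp(\hat\phi)$ and $\hat\epsilon$ on $V$ first and then push that similarity through $\wedge^r$, rather than through any exponential identity on $\Lambda^rV$ directly.
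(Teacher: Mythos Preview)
Your proof is correct and follows essentially the same route as the paper's: use Lemma~\ref{lemm:3} on $\hat\phi$ to get $\exp(\hat\phi)\sim\hat\epsilon$, push this through $\wedge^r$ by functoriality, invoke the identity $\wedge^r\exp(\hat\phi)=\exp(\mathcal{D}^{\hspace{1pt}r}\hat\phi)$ from the preceding lemma, and then apply Lemma~\ref{lemm:3} once more to the nilpotent $\mathcal{D}^{\hspace{1pt}r}\hat\phi$. The paper's proof is just a terser version of exactly this chain of similarities.
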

   \begin{proof}
We know from Lemma \ref{lemm:3} that $\exp({\hat\phi})-\mathrm{id}$ is similar to ${\hat\phi}$, 
hence $\exp({\hat\phi})$ is similar to
   ${\hat\phi}+\mathrm{id}={\hat\epsilon}$. So
   \begin{align*}
   \wedge^{r} {\hat\epsilon}-\mathrm{id} &\sim \wedge^{r} \exp({\hat\phi})-\mathrm{id}
   =\exp({\mathcal{D}^{\hspace{1pt}r}{\hat\phi}})-\mathrm{id}
    \sim \mathcal{D}^{\hspace{1pt}r}{\hat\phi} .
      \end{align*}
   \end{proof}

\section{Representation theory of $\mathfrak{sl}(2,\mathbb{C})$}\label{sl_2}
\noindent We refer to Section II.7 of \cite{jeH78} for studying the irreducible representations
of the Lie algebra $\mathfrak{sl}(2,\mathbb{C})$.\\

\noindent Let $\mathfrak{sl}(2,\mathbb{C})$ denote the special linear Lie algebra over 
$\mathbb{C}^2$ defined by $\mathfrak{sl}(2,\mathbb{C}):=\{a \in M_2(\mathbb{C})\mid {\rm Tr}(a)=0\}.$ It is well known that 
$\mathfrak{sl}(2,\mathbb{C})$ is a $3$-dimensional
simple complex Lie algebra. One can check that
$$
\mathfrak{B}:=\{h:=\left(\begin{array}{cc}
1 & 0\\
0 & -1\\
\end{array}
\right),e:=\left(\begin{array}{cc}
0 & 1\\
0 & 0\\
\end{array}
\right),f:=\left(\begin{array}{cc}
0 & 0\\
1 & 0\\
\end{array}
\right)\}$$
is a basis for this Lie algebra. \\

\noindent The following theorem is the foundation of representation theory of semisimple Lie algebras including
$\mathfrak{sl}(2,\mathbb{C})$. It is stated and proved in the Subsection II.6.3 of \cite{jeH78}.
\begin{theo}[Weyl]\label{Weyl}
Every finite dimensional representation of a semisimple
Lie algebra is completely reducible, namely, it can be decomposed into a direct sum of irreducible 
representations.
\end{theo}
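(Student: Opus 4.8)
The plan is to prove Weyl's complete reducibility theorem by the classical Casimir-operator argument of \cite[\S II.6.3]{jeH78}; I outline it for a semisimple Lie algebra $\mathfrak{g}$ over $\mathbb{C}$ and a finite-dimensional representation $\phi\colon\mathfrak{g}\to\mathfrak{gl}(V)$. It suffices to show that every $\mathfrak{g}$-submodule $W\subseteq V$ has a $\mathfrak{g}$-invariant complement, for then an obvious induction on $\dim V$ produces a decomposition of $V$ into irreducibles. The engine of the proof is the Casimir element: after replacing $\mathfrak{g}$ by $\mathfrak{g}/\ker\phi$ (still semisimple) one may assume $\phi$ is faithful, so that the trace form $\beta(x,y)=\operatorname{tr}\!\bigl(\phi(x)\phi(y)\bigr)$ is a nondegenerate invariant symmetric bilinear form on $\mathfrak{g}$ — nondegeneracy follows from Cartan's criterion, since the radical of $\beta$ is a solvable ideal and a semisimple Lie algebra has none. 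Choosing $\beta$-dual bases $\{x_i\}$, $\{y_i\}$ of $\mathfrak{g}$, the operator $c_\phi:=\sum_i\phi(x_i)\phi(y_i)\in\operatorname{End}(V)$ commutes with $\phi(\mathfrak{g})$ and satisfies $\operatorname{tr}(c_\phi)=\dim\mathfrak{g}$.

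Next I would reduce to the case in which $W$ has codimension one in $V$. Given an arbitrary submodule $W$, endow $\operatorname{Hom}_{\mathbb{C}}(V,W)$ with its natural $\mathfrak{g}$-module structure and set $\mathcal{V}=\{f:f|_W\ \text{is a scalar multiple of}\ \mathrm{id}_W\}$ and $\mathcal{W}=\{f:f|_W=0\}$; these are $\mathfrak{g}$-submodules, $\mathcal{W}$ has codimension one in $\mathcal{V}$, and $\mathfrak{g}$ acts trivially on $\mathcal{V}/\mathcal{W}$. Applying the codimension-one case to $\mathcal{W}\subseteq\mathcal{V}$ yields a $\mathfrak{g}$-invariant $f\in\mathcal{V}\smallsetminus\mathcal{W}$; rescaling so that $f|_W=\mathrm{id}_W$, the $\mathfrak{g}$-invariance of $f$ makes $\ker f$ a submodule, and it is the desired complement of $W$ in $V$.

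For the codimension-one case I would induct on $\dim W$. Since $V/W$ is one-dimensional and $\mathfrak{g}=[\mathfrak{g},\mathfrak{g}]$, the algebra $\mathfrak{g}$ acts trivially on $V/W$, hence $\phi(\mathfrak{g})V\subseteq W$ and in particular $c_\phi(V)\subseteq W$. If $W$ is irreducible, Schur's lemma forces $c_\phi|_W$ to be a scalar, and that scalar is nonzero because $\operatorname{tr}_V(c_\phi)=\operatorname{tr}_W(c_\phi|_W)=\dim\mathfrak{g}\neq0$; then $\ker c_\phi$ is a one-dimensional submodule with $W\cap\ker c_\phi=0$, so $V=W\oplus\ker c_\phi$. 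If $W$ is reducible, pick a nonzero proper submodule $W'\subsetneq W$: by induction $W/W'$ has a complement $\widetilde W/W'$ in $V/W'$, so $\widetilde W$ contains $W'$ with codimension one, and a second application of the inductive hypothesis inside $\widetilde W$ splits off a line $L$ with $\widetilde W=W'\oplus L$; one checks directly that $V=W\oplus L$. The base case $\dim W=0$ is trivial.

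The step I expect to be the main obstacle is the construction of the Casimir element and the verification that it acts as a \emph{nonzero} scalar on an irreducible summand; both rest on the nondegeneracy of the trace form, which in turn uses Cartan's solvability criterion together with the structure theory of semisimple Lie algebras (absence of nonzero abelian ideals). I would simply quote these facts from \cite{jeH78}. The remaining ingredients — the module structure on $\operatorname{Hom}_{\mathbb{C}}(V,W)$, the two nested inductions, and the Schur-lemma computation of the scalar — are routine once $c_\phi$ is available.
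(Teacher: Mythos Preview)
Your proposal is correct and is precisely the Casimir-element argument of \cite[\S II.6.3]{jeH78}, which is exactly what the paper invokes: the paper does not supply its own proof of Weyl's theorem but simply cites that reference. So your approach coincides with the paper's.
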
  

\begin{prop}\label{canonical}
Let $V$ be an \text{$n$-dimensional} complex vector space with a basis 
$\{e_1,\ldots,e_n \}$. Then the following equalities (for $i=1,\ldots,n$)

\begin{itemize}
\item [(a)]$\pi_n(h)e_i=(2i-n-1)e_i$;
\item [(b)]$\pi_n(e)e_i=i(n-i)e_{i+1}$; $(e_{n+1}:=0)$
\item [(c)]$\pi_n(f)e_i=e_{i-1}$; $(e_0:=0)$.
\end{itemize}

\noindent define a representation
$\pi_n:\mathfrak{sl}(2,\mathbb{C})\rightarrow \mathfrak{gl}(V)$.
\end{prop}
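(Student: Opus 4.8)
The plan is to verify directly that the assignment $h\mapsto\pi_n(h)$, $e\mapsto\pi_n(e)$, $f\mapsto\pi_n(f)$ preserves the Lie bracket. Since $\mathfrak{B}=\{h,e,f\}$ is a basis of $\mathfrak{sl}(2,\mathbb{C})$ and both the bracket and $\pi_n$ are bilinear, it suffices to check that the three structural relations $[h,e]=2e$, $[h,f]=-2f$, $[e,f]=h$ — which one reads off at once from the matrix forms of $h,e,f$ in $\mathfrak{B}$ — are reproduced by the corresponding endomorphisms of $V$, namely $[\pi_n(h),\pi_n(e)]=2\,\pi_n(e)$, $[\pi_n(h),\pi_n(f)]=-2\,\pi_n(f)$, and $[\pi_n(e),\pi_n(f)]=\pi_n(h)$. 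Each of $\pi_n(h),\pi_n(e),\pi_n(f)$ is a well-defined linear endomorphism of $V$ because it is prescribed on the basis $\{e_1,\ldots,e_n\}$, with the conventions $e_0:=0$ and $e_{n+1}:=0$ absorbing the extreme indices.

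First I would dispatch the two relations involving $h$. Applying each side to a basis vector $e_i$ and using $\pi_n(h)e_i=(2i-n-1)e_i$ gives
$$[\pi_n(h),\pi_n(e)]e_i=\bigl((2(i+1)-n-1)-(2i-n-1)\bigr)\,i(n-i)\,e_{i+1}=2\,i(n-i)\,e_{i+1}=2\,\pi_n(e)e_i,$$
and likewise
$$[\pi_n(h),\pi_n(f)]e_i=\bigl((2(i-1)-n-1)-(2i-n-1)\bigr)e_{i-1}=-2\,e_{i-1}=-2\,\pi_n(f)e_i.$$
At the ends $i=n$ and $i=1$ both sides vanish because $e_{n+1}=0$ and $e_0=0$, so nothing special is needed there.

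The remaining relation $[\pi_n(e),\pi_n(f)]=\pi_n(h)$ is the only one calling for a genuine (but short) computation: evaluating on $e_i$,
$$[\pi_n(e),\pi_n(f)]e_i=\pi_n(e)e_{i-1}-i(n-i)\,\pi_n(f)e_{i+1}=\bigl((i-1)(n-i+1)-i(n-i)\bigr)e_i,$$
and the identity $(i-1)(n-i+1)-i(n-i)=2i-n-1$ shows this equals $\pi_n(h)e_i$; once more $i=1$ and $i=n$ cause no trouble once $e_0=e_{n+1}=0$ is in force. Having checked all three relations on a basis, bilinearity upgrades $\pi_n$ to a homomorphism of Lie algebras, that is, a representation. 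I expect the only place where care is genuinely required is the index bookkeeping at the two ends of the basis, so I would state the boundary conventions explicitly and note that the displayed identities remain (trivially) valid for $i\in\{1,n\}$.
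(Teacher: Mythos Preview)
Your verification is correct: checking that $\pi_n$ respects the three bracket relations $[h,e]=2e$, $[h,f]=-2f$, $[e,f]=h$ on the basis vectors $e_i$ is exactly what is needed, and your computations (including the boundary cases $i=1,n$) go through. The paper itself does not supply a proof of this proposition; it simply states it and refers the reader to Section~II.7 of Humphreys, where the same direct verification is carried out (in slightly different notation). So your approach is the standard one and matches what the cited reference does.
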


\noindent We call $\pi_n$ defined in the previous proposition the \textit{canonical representation of $\mathfrak{sl}(2,\mathbb{C})$ on $V$ associated with the basis $\{e_1,\ldots,e_n\}$}.
\noindent We recall the following theorem from Section II.7 of \cite{jeH78}.
\begin{theo}\label{rep}
Let $\pi_n$ be the representation described above. Then
\begin{itemize}
\item [\textnormal{(i)}]
$\pi_n$ is an irreducible representation of
$\mathfrak{sl}(2,\mathbb{C})$.
\item [\textnormal{(ii)}]
Any \text{$n$-dimensional} irreducible representation of $\mathfrak{sl}(2,\mathbb{C})$
is equivalent to $\pi_n$.
\item [\textnormal{(iii)}]
Let $V$ be a finite dimensional $\mathfrak{sl}(2,\mathbb{C})$-module
and define $$V_\alpha=\{v\in V \mid h.v=\alpha~v\}$$
for $\alpha\in\mathbb{C}$. Then $V$ decomposes into a direct sum of 
irreducible submodules (Weyl),
and in any such decomposition, the number of summands is precisely
$\dim V_0 + \dim V_1$.
\end{itemize}
\end{theo}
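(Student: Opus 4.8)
The plan is to prove (i), (ii) and (iii) in that order, the only non-elementary ingredient being Weyl's complete reducibility theorem (Theorem~\ref{Weyl}); everything else will be the classical $\mathfrak{sl}(2)$ ``ladder'' argument built on the bracket relations $[h,e]=2e$, $[h,f]=-2f$, $[e,f]=h$ and the explicit formulas of Proposition~\ref{canonical}.

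For (i) I would argue as follows. Let $W\subseteq V$ be a nonzero submodule. By Proposition~\ref{canonical}(a) the operator $\pi_n(h)$ is diagonal in the basis $\{e_1,\dots,e_n\}$ with the $n$ pairwise distinct eigenvalues $2i-n-1$, so $W$, being $\pi_n(h)$-stable, is the span of those $e_i$ it contains; choose some $e_j\in W$. Applying $\pi_n(f)$ repeatedly and using $\pi_n(f)e_i=e_{i-1}$ puts $e_1,\dots,e_j$ into $W$; then applying $\pi_n(e)$ repeatedly to $e_1$ and using $\pi_n(e)e_i=i(n-i)e_{i+1}$ with $i(n-i)\neq0$ for $1\le i\le n-1$ puts $e_1,\dots,e_n$ into $W$, so $W=V$.

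For (ii) the plan is the standard highest-weight computation. Let $V$ be an irreducible module with $\dim V=n$. Since $\mathbb{C}$ is algebraically closed, $h$ has an eigenvector on $V$, and since $e$ raises $h$-weights by $2$ while $V$ is finite-dimensional, I would produce a vector $v_0\neq0$ with $h\cdot v_0=\lambda v_0$ and $e\cdot v_0=0$. Setting $v_i:=\tfrac1{i!}f^i\cdot v_0$, an induction on $i$ gives $h\cdot v_i=(\lambda-2i)v_i$, $f\cdot v_i=(i+1)v_{i+1}$ and $e\cdot v_i=(\lambda-i+1)v_{i-1}$; the nonzero $v_i$ are independent, so there is a largest $m$ with $v_m\neq0$, and the relation $0=e\cdot v_{m+1}$ forces $\lambda=m$. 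Then $\mathrm{span}\{v_0,\dots,v_m\}$ is a submodule, hence equals $V$ by irreducibility, so $m=n-1$ and $\lambda=n-1$; finally I would check that $v_i\mapsto\tfrac1{i!}e_{n-i}$ intertwines the $h$-, $e$- and $f$-actions via Proposition~\ref{canonical}, producing an isomorphism onto the representation space of $\pi_n$.

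For (iii), by Weyl's theorem I would write $V=\bigoplus_{p=1}^N W_p$ with each $W_p$ irreducible, so $W_p\cong\pi_{d_p}$ with $d_p:=\dim W_p$ by (i) and (ii). Since the $h$-eigenvalues occurring in $\pi_d$ are exactly $d-1,d-3,\dots,-(d-1)$, each with multiplicity one, the module $\pi_d$ meets the eigenvalue $0$ (in a one-dimensional space) iff $d$ is odd and the eigenvalue $1$ iff $d$ is even. Hence $\dim V_0=\#\{p:d_p\ \text{odd}\}$ and $\dim V_1=\#\{p:d_p\ \text{even}\}$, so $\dim V_0+\dim V_1=N$; and since $V_0$ and $V_1$ are intrinsic to $V$, this shows the number of summands is the same for every decomposition. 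The point to be careful about — rather than a genuine obstacle — is in (ii): a priori the $f$-string on $v_0$ need neither span $V$ nor terminate, and it is precisely irreducibility that forces it to span and finite-dimensionality that forces termination and pins down $\lambda=\dim V-1$. Beyond that, the only serious input is Weyl's theorem, which both powers (iii) and is the clean reason $h$ acts semisimply on an arbitrary finite-dimensional module.
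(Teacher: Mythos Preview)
Your proof is correct and follows the classical ladder argument for $\mathfrak{sl}(2,\mathbb{C})$-representations. The paper itself does not supply a proof of this theorem; it simply recalls the result from Section~II.7 of Humphreys' textbook \cite{jeH78}, and your argument is essentially the one given there.
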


\section{Some computer codes and a counterexample} \label{codes}
\noindent To compute the $K$-groups of $\mathcal{C}(\Bbb{T}^n)\rtimes_\alpha
\Bbb{Z}$, one should first compute the kernels and cokernels of the 
following integer matrices 
\begin{equation}
\mathsf{A}_{r}:=\wedge^r \mathsf{A}- \mathsf{I}_{\binom{n}{r}}
\end{equation}
for $r=0,1,\ldots,n$, where $\mathsf{A}$ is the integer matrix corresponding
to $\alpha$ acting on $\Bbb{Z}^n$. We have written two Maple codes to 
obtain this goal. \\

\noindent The first one is an auxiliary procedure called \verb"exterior(r,A)",
which computes the $r$-th  exterior power of a given $n \times n$ integer
matrix $\mathsf{A}$ for $r=1,\ldots,n$ as follows (note that 
$\wedge^0 \mathsf{A}:=\mathsf{I}_1$)
\begin{verbatim}
> exterior:=proc(r,A)
> local n,N,Q,E,i,j;
> n:=linalg[rowdim](A);
> N:=binomial(n,r);
> Q:=combinat[choose](n,r);
> E:=array(1..N,1..N);
> for i from 1 to N do
> for j from 1 to N do
> E[i,j]:=linalg[det](linalg[submatrix](A,Q[i],Q[j]));
> od;
> od;
> RETURN(evalm(E));
> end;
\end{verbatim}
The second code, which calls the first one, is called \verb"po(A)". It lists
the factorized characteristic polynomials of the Smith normal forms 
of $\mathsf{A}_{r}$ for $r=0,1,\ldots,n$ given a matrix $\mathsf{A}$
\begin{verbatim}
> po:=proc(A)
> local r,n,x,p;
> n:=linalg[rowdim](A); 
> print(x);
> for r from 1 to n do
> p:=factor(linalg[charpoly](linalg[ismith](exterior(r,A)-1),x));
> print(p);
> od;
> end;
\end{verbatim}
These factorized polynomials encode the diagonal
entries of the Smith normal forms of the matrices $\mathsf{A}_{r}$ for 
$r=0,1,\ldots,n$. In particular, one can easily
find the kernels and cokernels of these matrices. To see this, observe 
that the kernel of an $m \times m$ integer matrix is isomorphic to
a torsion-free finitely generated abelian group, whose 
rank is the number of zeros on the diagonal of its 
Smith normal form, and the cokernel is isomorphic to 
$\oplus_{j=1}^m \Bbb{Z}_{k_j}$, where $k_1,\ldots,k_m$
are the diagonal entries of the Smith normal form.\\

\noindent The following example computes the $K$-groups of 
$\mathscr{A}_{6,\theta}$, which was mentioned in Remark \ref{rema:2} as a counterexample for
\cite[Proposition 2.17]{rJ86}.
\begin{example}\label{counter}
$K_0(\mathscr{A}_{6,\theta})\cong\Bbb{Z}^{13}$ and 
$K_1(\mathscr{A}_{6,\theta})\cong\Bbb{Z}^{13} \oplus\Bbb{Z}_2$.
\end{example}
\begin{proof}
Using the procedure \verb"po(A)" for the matrix $\mathsf{A}:=\mathsf{S}_6$ as defined 
at the beginning of Section \ref{sec:Anzai} we get
\begin{verbatim}
po(A);
\end{verbatim}
\begin{equation*}
x
\end{equation*} 
\begin{equation*}
x(x-1)^5 
\end{equation*}
\begin{equation*}
x^3(x-1)^{12}
\end{equation*} 
\begin{equation*} 
x^{3}(x - 2)(x - 1)^{16}
\end{equation*} 
\begin{equation*} 
x^3(x-1)^{12}
\end{equation*} 
\begin{equation*} 
x(x-1)^5 
\end{equation*} 
\begin{equation*}
x  
\end{equation*}                       
Consequently, we have the following table representing the kernels
and cokernels of 
$\mathsf{S}_{6,r}:=\wedge^r \mathsf{S}_6-\mathsf{I}_{\binom{6}{r}}$
for $r=0,1,\ldots,6$.

\begin{table}[h]
\begin{center}
\begin{tabular}{|c|c|c|}
\hline
$r$  & $\ker\mathsf{S}_{6,r}$ & $\mathrm{coker}\hspace{2pt}\mathsf{S}_{6,r}$ 
\\ \hline\hline
0 & $\Bbb{Z}$   & $\Bbb{Z}$\\
1 & $\Bbb{Z}$ & $\Bbb{Z}$  \\ 
2 & $\Bbb{Z}^3$ & $\Bbb{Z}^3$   \\ 
3 & $\Bbb{Z}^3$ & $\Bbb{Z}^3 \oplus\Bbb{Z}_2$   \\ 
4 & $\Bbb{Z}^3$ & $\Bbb{Z}^3$   \\ 
5 & $\Bbb{Z}$ & $\Bbb{Z}$   \\ 
6 & $\Bbb{Z}$ & $\Bbb{Z}$  \\ 
\hline
\end{tabular}
\end{center}
\end{table}
%\newpage
\noindent Now, using Theorem \ref{theo:1} we have
\begin{align*}
K_0({\mathscr{A}_{6,\theta}})\cong 
&(\mathrm{coker}\hspace{2pt}
\mathsf{S}_{6,0}\oplus \mathrm{coker}\hspace{2pt}\mathsf{S}_{6,2}
\oplus \mathrm{coker}\hspace{2pt}\mathsf{S}_{6,4}
\oplus \mathrm{coker}\hspace{2pt}\mathsf{S}_{6,6})\oplus
(\ker \mathsf{S}_{6,1}\oplus \ker \mathsf{S}_{6,3}
\oplus \ker \mathsf{S}_{6,5})\\
\cong&(\mathbb{Z}\oplus\mathbb{Z}^3 \oplus\mathbb{Z}^3 
\oplus\mathbb{Z})\oplus(\Bbb{Z}\oplus\Bbb{Z}^3
\oplus \Bbb{Z})=\mathbb{Z}^{13},\\
K_1({\mathscr{A}_{6,\theta}})\cong 
&(\mathrm{coker}\hspace{2pt}
\mathsf{S}_{6,1}\oplus \mathrm{coker}\hspace{2pt}\mathsf{S}_{6,3}
\oplus \mathrm{coker}\hspace{2pt}\mathsf{S}_{6,4})\oplus
(\ker \mathsf{S}_{6,0}\oplus \ker \mathsf{S}_{6,2}
\oplus \ker \mathsf{S}_{6,4}\oplus \ker \mathsf{S}_{6,6})\\
\cong&(\mathbb{Z}\oplus\mathbb{Z}^3 \oplus\Bbb{Z}_2  
\oplus\mathbb{Z})\oplus(\Bbb{Z}\oplus\Bbb{Z}^3
\oplus\Bbb{Z}^3 \oplus \Bbb{Z})=\mathbb{Z}^{13}\oplus\Bbb{Z}_2.
\end{align*}
\end{proof}

\section{Applications to $C^*(\mathfrak{D}_n)$} \label{cocompact}
\noindent This appendix is devoted to some applications of the results of this paper to 
the $C^*$-algebras studied in \cite{kR06}, in which the authors promised to completely classify the 
simple infinite dimensional quotients of the group $C^*$-algebra of their interest $C^*(\mathfrak{D}_n)$ by their $K$-theoretic invariants in a later work. In this context, the discrete group $\mathfrak{D}_n$ is a higher dimensional 
analogue of the discrete Heisenberg group $H_3$, and is defined by
\begin{equation*}
\mathfrak{D}_n=\langle\, x,y_0,y_1,\ldots,y_n\mid x y_0=y_0x,\, y_iy_j=y_j y_i~ \text{for}~ 0 \leq i,j \leq  n,\,
[x,y_j]=y_{j-1}~\text{for}~1\le j \le n\,\rangle,
\end{equation*}    
where $[x,y]:=xyx^{-1}y^{-1}$. The group $\mathfrak{D}_n$ can be represented as a semidirect product 
$\Bbb{Z}^{n+1}\rtimes_\eta \Bbb{Z}$, where the group homomorphism $\eta:\Bbb Z\to\mathrm{GL}(n+1,\Bbb Z)$ is such that $\eta(k)$ is the matrix, whose $(i,j)$-entry is given by $\binom{k}{j-i}$ as defined in \cite{kR06}. This realization allows us 
to study $K$-theory of $C^*(\mathfrak{D}_n)$.
\begin{prop}
$K_i(C^*(\mathfrak{D}_n))\cong K_i(\mathscr{A}_{n+1,\theta})$ for $i=0,1$. 
In particular,
$$\mathrm{rank}\hspace{2pt}K_0(C^*(\mathfrak{D}_n))
=\mathrm{rank}\hspace{2pt}K_1(C^*(\mathfrak{D}_n))=a_{n+1}.$$
\end{prop}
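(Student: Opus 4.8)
$K_i(C^*(\mathfrak{D}_n))\cong K_i(\mathscr{A}_{n+1,\theta})$ for $i=0,1$, and in particular $\mathrm{rank}\,K_0(C^*(\mathfrak{D}_n))=\mathrm{rank}\,K_1(C^*(\mathfrak{D}_n))=a_{n+1}$.

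The plan is to use the semidirect-product description $\mathfrak{D}_n\cong\Bbb Z^{n+1}\rtimes_\eta\Bbb Z$ to realize $C^*(\mathfrak{D}_n)$ as a crossed product and then invoke the exterior-algebra machinery of Section \ref{sec:general}. First I would note that $C^*(\Bbb Z^{n+1})\cong\mathcal C(\Bbb T^{n+1})$ via the Fourier transform, and that the action of $\Bbb Z$ dual to $\eta$ makes $C^*(\mathfrak{D}_n)\cong\mathcal C(\Bbb T^{n+1})\rtimes_\beta\Bbb Z$, where the homeomorphism $\beta$ of $\Bbb T^{n+1}$ is the one induced by $\eta(1)\in\mathrm{GL}(n+1,\Bbb Z)$ acting on the dual. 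Thus $\beta$ is a ``linear'' homeomorphism, and the associated integer matrix $\hat\beta$ acting on $\Bbb Z^{n+1}=K_1(\mathcal C(\Bbb T^{n+1}))$ is (the transpose of, or $\eta(1)$ itself up to the identification in Section \ref{sec:general}) the matrix whose $(i,j)$-entry is $\binom{1}{j-i}$, i.e. the bidiagonal matrix with $1$'s on the diagonal and $1$'s on the superdiagonal. But that is precisely $\mathsf S_{n+1}$, the full Jordan block defined at the start of Section \ref{sec:Anzai}.

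The key steps, in order, would be: (1) establish the isomorphism $C^*(\mathfrak{D}_n)\cong\mathcal C(\Bbb T^{n+1})\rtimes_\beta\Bbb Z$ from the semidirect-product structure; (2) identify the induced automorphism $\hat\beta$ of $\Bbb Z^{n+1}$ with the matrix $\mathsf S_{n+1}$ by evaluating $\eta(1)$, using $\binom{1}{0}=1$, $\binom{1}{1}=1$, and $\binom{1}{j-i}=0$ for $j-i\notin\{0,1\}$; (3) apply Theorem \ref{theo:1} (or the boxed isomorphisms \eqref{exact:K_0}, \eqref{exact:K_1}) to both $C^*(\mathfrak{D}_n)$ and $\mathscr A_{n+1,\theta}$, observing that both crossed products have the \emph{same} matrix $\mathsf S_{n+1}$, hence the same kernels and cokernels of $\wedge^r\mathsf S_{n+1}-\mathsf I$ for all $r$, hence isomorphic $K$-groups; (4) read off the rank equality from Corollary \ref{coro:3} and Notation \ref{not:a_n}, or alternatively from Theorem \ref{rank} since $\mathsf S_{n+1}$ is unipotent of maximal degree.

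The main obstacle is step (1)–(2): one must be careful about which copy of $\Bbb Z^{n+1}$ is being dualized and whether the action on the dual torus is implemented by $\eta(1)$, its inverse, or its transpose, since the paper's convention (Section \ref{sec:general}) computes $\hat\alpha$ from $\alpha^{-1}=(f_1,\dots,f_n)$ via $a_{ji}=A_j[f_i]$. One should check that, under these conventions, the matrix attached to $C^*(\mathfrak{D}_n)$ is genuinely $\mathsf S_{n+1}$ and not, say, its transpose — though even if it were the transpose, $\wedge^r$ of a matrix and of its transpose have the same Smith normal form, so the $K$-groups would still coincide. Once the matrix is correctly identified as unipotent of maximal degree on $\Bbb Z^{n+1}$, the rank statement is immediate from Theorem \ref{rank}, and the full $K$-group isomorphism follows because the entire computation in Theorem \ref{theo:1} depends only on the conjugacy class of the matrix.
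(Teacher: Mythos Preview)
Your proposal is correct and follows essentially the same approach as the paper's own proof: realize $C^*(\mathfrak{D}_n)$ as $\mathcal{C}(\Bbb{T}^{n+1})\rtimes\Bbb{Z}$ via the semidirect-product structure, identify the associated integer matrix with $\mathsf{S}_{n+1}$, and then invoke Theorem~\ref{theo:1}. Your discussion is in fact more detailed than the paper's (which simply cites \cite{kR06} for the matrix identification), and your observation that a possible transpose discrepancy would be harmless for the Smith normal form is a nice robustness check that the paper does not mention.
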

\begin{proof} 
Since $\mathfrak{D}_n \cong \Bbb{Z}^{n+1}\rtimes_\eta \Bbb{Z}$, so
$C^*(\mathfrak{D}_n)\cong C^*(\Bbb{Z}^{n+1})\rtimes_{\tilde{\eta}} \Bbb{Z}
\cong\mathcal{C}(\Bbb{T}^{n+1})\rtimes_{\tilde{\eta}} \Bbb{Z}$ and the
integer matrix corresponding to $\tilde{\eta}$ is the $(n+1)\times(n+1)$ 
matrix $\mathsf{M}_n$ introduced in \cite{kR06}, which is precisely 
the matrix $\mathsf{S}_{n+1}$ defined in Section \ref{sec:rank} to describe 
the linear structure of Anzai transformations on $\Bbb T^{n+1}$.
The rest of the proof follows from the Theorem \ref{theo:1}.
\end{proof}

\noindent As some higher-dimensional analogues of the irrational rotation algebras $A_\theta$, all simple 
infinite-dimensional quotients of $C^*(\mathfrak{D}_n)$ have been classified in Theorem 3.2 of \cite{kR06}. These consist
of the $C^*$-algebras $\mathscr{A}_{n,\theta}$ for some irrational parameter $\theta$, and a few more classes of $C^*$ algebras denoted by $A^{(n)}_i$, which are of the form $\mathcal{C}(\mathbf{Y}_{i}\times\Bbb{T}^{n-i})\rtimes_{\phi_{i}}\mathbb{Z}$ for some suitable finite sets
$\mathbf{Y}_i$ and minimal homeomorphisms $\phi_i$ for $i=1,2,\ldots,n-1$. Then it is proved in Theorem 4.8 of \cite{kR06} that
$A^{(n)}_i \cong M_{C_i}(B^{(n)}_i)$, where $B^{(n)}_i$ is a certain transformation group $C^*$-algebra of some 
affine Furstenberg transformation on $\Bbb T^{n-i}$. We conclude the following results.
\begin{coro}
Let $A$ be a simple infinite dimensional quotient of $C^*(\mathfrak{D}_n)$.
Then $\mathrm{rank}\hspace{2pt}K_0(A)=
\mathrm{rank}\hspace{2pt}K_1(A)=a_{n-i}$
for some $~i \in \{0,1,\ldots,n-1 \}$ that is uniquely determined by the
isomorphism $A \cong \mathcal{C}(\mathbf{Y}_{i}
\times\Bbb{T}^{n-i})\rtimes_{\phi_{i}}\mathbb{Z}$ as in
Theorem 3.2 of \cite{kR06}.
\end{coro}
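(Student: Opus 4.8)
The plan is to combine the classification of the simple infinite dimensional quotients of $C^*(\mathfrak{D}_n)$ in \cite{kR06} with Theorem \ref{rank} above. By Theorem 3.2 of \cite{kR06}, every simple infinite dimensional quotient $A$ is isomorphic either to $\mathscr{A}_{n,\theta}$ for some irrational $\theta$ --- which we regard as the case $i=0$, with $\mathbf{Y}_0$ a single point --- or to $A^{(n)}_i=\mathcal{C}(\mathbf{Y}_i\times\Bbb{T}^{n-i})\rtimes_{\phi_i}\Bbb Z$ for exactly one index $i\in\{1,\ldots,n-1\}$; moreover that index is an invariant of the isomorphism class. So it suffices to compute $\mathrm{rank}\,K_j(A)$ in each of these two situations.

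First I would treat $i=0$: here $A\cong\mathscr{A}_{n,\theta}$, and by Notation \ref{not:a_n} one has $\mathrm{rank}\,K_0(A)=\mathrm{rank}\,K_1(A)=a_n=a_{n-i}$. For $1\le i\le n-1$, I would invoke Theorem 4.8 of \cite{kR06}, which provides an isomorphism $A\cong A^{(n)}_i\cong M_{C_i}(B^{(n)}_i)$, where $B^{(n)}_i=\mathcal{C}(\Bbb{T}^{n-i})\rtimes_\psi\Bbb Z$ for an affine Furstenberg transformation $\psi$ on $\Bbb{T}^{n-i}$. Since $K$-theory is unchanged under amplification by matrices (more generally, under stable isomorphism), we get $K_j(A)\cong K_j(B^{(n)}_i)$ for $j=0,1$. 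Now, by Definition \ref{furst-anzai}(b) the integer matrix of $\psi$ has the shape $(\heartsuit)$ with nonzero superdiagonal, hence is unipotent of maximal degree by Example \ref{furst-matrix}; therefore Theorem \ref{rank}, applied on the torus $\Bbb{T}^{n-i}$, gives $\mathrm{rank}\,K_0(B^{(n)}_i)=\mathrm{rank}\,K_1(B^{(n)}_i)=a_{n-i}$. Putting these together yields $\mathrm{rank}\,K_0(A)=\mathrm{rank}\,K_1(A)=a_{n-i}$.

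The uniqueness of $i$ is already part of Theorem 3.2 of \cite{kR06}; alternatively, it can be recovered a posteriori from the rank, since $\{a_m\}$ is strictly increasing by Proposition \ref{prop:2}, so the value $a_{n-i}$ determines $n-i$ and hence $i$. I do not expect any real obstacle here: the argument is a short assembly of results already in hand. The one point that warrants a line of care is verifying that the transformation underlying $B^{(n)}_i$ is genuinely an \emph{affine} Furstenberg transformation in the sense of Definition \ref{furst-anzai}(b), so that its linear part is unipotent of maximal degree and Theorem \ref{rank} applies word for word --- but this is exactly the content of Theorem 4.8 of \cite{kR06}.
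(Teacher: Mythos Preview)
Your proposal is correct and follows essentially the same route as the paper: reduce $A$ via Theorem 4.8 of \cite{kR06} to a matrix algebra over a Furstenberg transformation group $C^*$-algebra $B^{(n)}_i$ on $\Bbb T^{n-i}$, use stability of $K$-theory under matrix amplification, and then invoke Theorem \ref{rank}. Your extra remark that the strict monotonicity of $\{a_m\}$ (Proposition \ref{prop:2}) recovers $i$ from the rank is exactly the observation the paper makes immediately after this corollary.
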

\begin{proof}
It is proved that $A$ is isomorphic to a matrix algebra over
a Furstenberg transformation group $C^*$-algebra $B^{(n)}_i$ on $\Bbb{T}^{n-i}$ 
for some suitable $i\in\{0,1\ldots,n\}$ \cite[Theorem 4.8]{kR06}. So $K_j(A) \cong K_j(B^{(n)}_i)$ for $j=0,1$. The rest of the proof is clear from the preceding theorem.
  \end{proof}
  
 \noindent  We saw in Proposition \ref{prop:2} that $\{a_n \}$ is a 
strictly increasing
  sequence. Therefore the preceding corollary is a first step towards the 
classification of the simple infinite dimensional quotients of $C^*(\mathfrak{D}_n)$ by means of $K$-theory. 
But as is seen, the rank of the $K$-groups alone can not distinguish the algebras at the same ``level" (i.e. those algebras that
are included in the same class, but with different values of the parameters).
The other powerful $K$-theoretic invariant that helps us do this is the trace invariant, i.e. the
range of the unique tracial state acting on the $K_0$-group.
\begin{prop}
Suppose $A \cong \mathcal{C}(\mathbf{Y}_{i} 
\times\Bbb{T}^{n-i})\rtimes_{\phi_{i}}\mathbb{Z}$
is a simple infinite dimensional quotient of $C^*(\mathfrak{D}_n)$ as in 
Theorem 3.2 in \cite{kR06}. Then $A$ has a unique tracial state $\tilde\tau$
and $\tilde\tau_{*}K_0(A)=\frac{1}{C_i}(\Bbb{Z}+\Bbb{Z}\vartheta_i)$, where 
$C_i=|\mathbf{Y}_{i}|$
and $e^{2 \pi i \vartheta_i}=\zeta_i=(-1)^{C_i+1}\eta_i^{C_i}$ as in Lemma 
4.6 and Theorem 4.8 in \cite{kR06}.
\end{prop}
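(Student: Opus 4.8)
The plan is to reduce the statement to the Furstenberg transformation group $C^*$-algebra $B^{(n)}_i$ that appears in the structure theorem \cite[Theorem 4.8]{kR06}, and then to track how the unique trace and the $K_0$-group behave under stabilization by a finite matrix algebra. By \cite[Theorems 3.2 and 4.8]{kR06}, a simple infinite-dimensional quotient $A\cong\mathcal{C}(\mathbf{Y}_i\times\Bbb T^{n-i})\rtimes_{\phi_i}\Bbb Z$ is isomorphic to $M_{C_i}\bigl(B^{(n)}_i\bigr)$, where $C_i=|\mathbf{Y}_i|$ and $B^{(n)}_i\cong\mathcal{C}(\Bbb T^{n-i})\rtimes_{\varphi}\Bbb Z$ is the transformation group $C^*$-algebra of an affine Furstenberg transformation $\varphi=\varphi_{\vartheta_i,\boldsymbol f}$ on $\Bbb T^{n-i}$ whose rotation parameter is the irrational number $\vartheta_i$ determined by $e^{2\pi i\vartheta_i}=\zeta_i=(-1)^{C_i+1}\eta_i^{C_i}$ (Lemma 4.6 and Theorem 4.8 of \cite{kR06}). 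Adjusting $\vartheta_i$ modulo $1$ if necessary, we may assume $\vartheta_i\in(0,1)$; this changes neither $B^{(n)}_i$ nor the group $\Bbb Z+\Bbb Z\vartheta_i$.

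First I would settle uniqueness of the trace. Since $\vartheta_i$ is irrational, Theorem \ref{Furstenberg} (together with Corollary \ref{coro:1}) shows that $\varphi_{\vartheta_i,\boldsymbol f}$ is minimal and uniquely ergodic, so $B^{(n)}_i$ is simple and has a unique tracial state $\tau$. A matrix algebra over a $C^*$-algebra with a unique tracial state again has a unique tracial state, namely $\tilde\tau=\tau\circ\mathrm{tr}_{C_i}$ with $\mathrm{tr}_{C_i}$ the normalized trace on $M_{C_i}$; this is the unique tracial state of $A$. Next, by Theorem \ref{theo:order}(i) (see also the discussion at the beginning of Section \ref{order} and \cite{rJ86}), the range of the unique tracial state of $B^{(n)}_i$ on its $K_0$-group is
$$\tau_*K_0\bigl(B^{(n)}_i\bigr)=\Bbb Z+\Bbb Z\vartheta_i.$$

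It then remains to transport this through the Morita isomorphism. The corner embedding $\iota\colon B^{(n)}_i\hookrightarrow M_{C_i}\bigl(B^{(n)}_i\bigr)=A$, $b\mapsto b\oplus 0$, induces an isomorphism $\iota_*\colon K_0\bigl(B^{(n)}_i\bigr)\xrightarrow{\ \cong\ }K_0(A)$, and for any projection $q$ over $B^{(n)}_i$ one has $\tilde\tau_*\bigl(\iota_*[q]\bigr)=\tilde\tau\bigl(\iota(q)\bigr)=\tfrac1{C_i}\,\tau(q)=\tfrac1{C_i}\,\tau_*([q])$, since $\iota$ places $q$ into a single diagonal block. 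As $\iota_*$ is surjective, we conclude
$$\tilde\tau_*K_0(A)=\tfrac1{C_i}\,\tau_*K_0\bigl(B^{(n)}_i\bigr)=\tfrac1{C_i}\bigl(\Bbb Z+\Bbb Z\vartheta_i\bigr),$$
which is the assertion. The only non-routine ingredient is the identification of $A$ with $M_{C_i}\bigl(B^{(n)}_i\bigr)$ and of the rotation parameter of $B^{(n)}_i$ with $\vartheta_i$; this is exactly the content of Lemma 4.6 and Theorems 3.2 and 4.8 of \cite{kR06}, which I would quote rather than reprove, since those results lie outside the scope of the present paper. Everything else — unique traces on matrix algebras, the Morita isomorphism on $K_0$, and the factor $1/C_i$ it introduces on traces — is standard.
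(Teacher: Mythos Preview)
Your proof is correct and follows essentially the same route as the paper: reduce to $B^{(n)}_i$ via \cite[Theorem 4.8]{kR06}, invoke Corollary~\ref{coro:1} for the unique trace on $B^{(n)}_i$, use Theorem~\ref{theo:order} (or \cite[Theorem 2.23]{rJ86}) for the trace range $\Bbb Z+\Bbb Z\vartheta_i$, and then pass to the matrix algebra. The only cosmetic difference is that the paper cites \cite[Lemma 3.5]{rJ86} for the $\tfrac{1}{C_i}$ factor, whereas you spell out the corner-embedding argument directly; both are fine.
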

\begin{proof}
Following Theorem 4.8 in \cite{kR06}, $A$ is isomorphic to 
$M_{C_i}(B^{(n)}_i)=M_{C_i}(\Bbb{C})\otimes B^{(n)}_i$, where 
$B^{(n)}_i$ is a simple Furstenberg transformation group $C^*$-algebra with the irrational
parameter $\zeta_i=(-1)^{C_i+1}\eta_i^{C_i}$. By Corollary \ref{coro:1}, $B^{(n)}_i$ has a unique tracial state $\tau$. Moreover, $\tau_*K_0(B^{(n)}_i)=\Bbb{Z}+\Bbb{Z}\vartheta_i$, where $e^{2 \pi i \vartheta_i}=\zeta_i$
 (see \cite[Theorem 2.23]{rJ86} or Theorem \ref{theo:order}). Thus $A$ has the unique tracial state
  $\tilde\tau=(\frac{1}{C_i}\mathrm{Tr})\otimes\tau$, in which $\mathrm{Tr}$ 
is the usual trace on $M_{C_i}(\Bbb{C})$, and so
  $\tilde\tau_{*}K_0(A)=\frac{1}{C_i}(\Bbb{Z}+\Bbb{Z}\vartheta_i)$ 
\cite[Lemma 3.5]{rJ86}.
\end{proof}
\noindent Finally, we can characterize all simple infinite-dimensional quotients of $C^*(\mathfrak{D}_n)$.
\begin{prop}
$\mathscr{A}_{n,\theta}\cong\mathscr{A}_{n',\theta'}$ if and only if $n=n'$ and there 
exists an integer $k$ such that $\theta=k \pm\theta'$. More generally, let 
$A_i^{(n)}\cong
\mathcal{C}(\mathbf{Y}_{i} \times\Bbb{T}^{n-i})\rtimes_{\phi_{i}}\mathbb{Z}$
be a simple infinite dimensional quotient of $C^*(\mathfrak{D}_n)$ with the
structure constants $\lambda,\mu_1,\ldots,\mu_i$ as on p. 165-166
of \cite{kR06}, and let $A_{i'}^{(n')}\cong
\mathcal{C}(\mathbf{Y'}_{i'} 
\times\Bbb{T}^{n'-i'})\rtimes_{\phi'_{i}}\mathbb{Z}$
be a simple infinite dimensional quotient of $C^*(\mathfrak{D}_{n'})$ with 
the structure constants $\lambda',\mu'_1,\ldots,\mu'_{i'}$.
Suppose that $C_i=|\mathbf{Y}_{i}|$ and $C'_{i'}=|\mathbf{Y'}_{i'}|$. Then 
$A_i^{(n)}\cong A_{i'}^{(n')}$ if and only if
$n-i=n'-i'$, $C_i=C'_{i'}$ and
$$\lambda^{\binom{{C_i}}{i+1}}\mu_1^{\binom{{C_i}}{i}}
                 \mu_2^{\binom{{C_i}}{i-1}}\ldots\mu_{i}^{C_i}=                 
\lambda'^{\binom{{C'_{i'}}}{i'+1}}{\mu'_1}^{\binom{{C'_{i'}}}{i'}}                 
{\mu'_2}^{\binom{{C'_{i'}}}{i'-1}}\ldots{\mu'_{i'}}^{C'_{i'}}$$
                 or
                 $$\lambda^{\binom{{C_i}}{i+1}}\mu_1^{\binom{{C_i}}{i}}
                 \mu_2^{\binom{{C_i}}{i-1}}\ldots\mu_{i}^{C_i}=                 
(\lambda'^{\binom{{C'_{i'}}}{i'+1}}{\mu'_1}^{\binom{{C'_{i'}}}{i'}}                 
{\mu'_2}^{\binom{{C'_{i'}}}{i'-1}}\ldots{\mu'_{i'}}^{C'_{i'}})^{-1}.$$
\end{prop}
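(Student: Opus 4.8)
The plan is to run the entire statement through the Lin--Phillips classification theorem. Each $A_i^{(n)}\cong\mathcal{C}(\mathbf{Y}_i\times\Bbb T^{n-i})\rtimes_{\phi_i}\Bbb Z$ is a crossed product by a minimal, uniquely ergodic homeomorphism of a finite-dimensional infinite compact metric space (uniquely ergodic because $A_i^{(n)}\cong M_{C_i}(B_i^{(n)})$ has a unique tracial state by Corollary \ref{coro:1}, $B_i^{(n)}$ being a simple Furstenberg crossed product). Hence, by \cite[Corollary 4.8]{LP10}, such algebras have stable rank one, real rank zero and tracial rank zero and are classified by their Elliott invariants $(K_0,K_0^+,[1],K_1)$, and the order on $K_0$ is determined by the unique trace (\cite[Theorem 5.1(1)]{qLP97}, \cite[Theorem 4.5(1)]{ncP07}). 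So $A_i^{(n)}\cong A_{i'}^{(n')}$ if and only if there is a group isomorphism $K_0(A_i^{(n)})\to K_0(A_{i'}^{(n')})$ taking $[1]$ to $[1]$ and intertwining the tracial homomorphisms, together with $K_1(A_i^{(n)})\cong K_1(A_{i'}^{(n')})$. I will compute each ingredient through $A_i^{(n)}\cong M_{C_i}(B_i^{(n)})$ with $B_i^{(n)}$ an affine Furstenberg transformation group $C^*$-algebra on $\Bbb T^{n-i}$.

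First I would identify the abstract $K$-groups. By the corollary preceding this proposition, $\mathrm{rank}\,K_0(A_i^{(n)})=\mathrm{rank}\,K_1(A_i^{(n)})=a_{n-i}$, so the strict monotonicity of $\{a_m\}$ (Proposition \ref{prop:2}) forces $n-i=n'-i'$ whenever the algebras are isomorphic. I would then invoke \cite[Theorem 4.8]{kR06} to the effect that, for a fixed value of $n-i$, the integer matrix attached to the affine Furstenberg transformation underlying $B_i^{(n)}$ is the same (up to $\mathrm{GL}(n-i,\Bbb Z)$-conjugacy) for every admissible choice of structure constants, only the rotation parameter $\vartheta_i$ varying. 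Theorem \ref{theo:1} then shows that $K_0(B_i^{(n)})$, $K_1(B_i^{(n)})$, and the location of $[1]$ inside $K_0$ (it generates the $\Bbb Z$-summand $\mathrm{coker}(\wedge^0\hat\alpha-\mathrm{id})$, cf. the proof of Theorem \ref{theo:order}) depend only on $n-i$. Passing to $M_{C_i}$ leaves the groups unchanged but replaces $[1]$ by $C_i[1_{B_i^{(n)}}]$, so, granting $n-i=n'-i'$, matching $[1]$ up to a $K_0$-isomorphism is equivalent to $C_i=C'_{i'}$.

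Next I would pin down the trace. The proposition preceding this one gives $\tilde\tau_*K_0(A_i^{(n)})=\frac{1}{C_i}(\Bbb Z+\Bbb Z\vartheta_i)$ with $\tilde\tau_*[1]=1$. Since the trace is the unique state of the pointed ordered group, an isomorphism $A_i^{(n)}\cong A_{i'}^{(n')}$ forces $\frac{1}{C_i}(\Bbb Z+\Bbb Z\vartheta_i)=\frac{1}{C'_{i'}}(\Bbb Z+\Bbb Z\vartheta'_{i'})$ as subgroups of $\Bbb R$; intersecting with $\Bbb Q$ gives $C_i=C'_{i'}$, after which $\Bbb Z+\Bbb Z\vartheta_i=\Bbb Z+\Bbb Z\vartheta'_{i'}$, which for irrational $\vartheta$ holds if and only if $\vartheta_i=k\pm\vartheta'_{i'}$ for some $k\in\Bbb Z$, i.e. $\zeta_i:=e^{2\pi i\vartheta_i}$ equals $(\zeta'_{i'})^{\pm1}$. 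Conversely, if $n-i=n'-i'$, $C_i=C'_{i'}$ and $\zeta_i=(\zeta'_{i'})^{\pm1}$, then $B_i^{(n)}$ and $B_{i'}^{(n')}$ are affine Furstenberg crossed products with the same matrix whose rotation parameters are related by $\vartheta_i=k\pm\vartheta'_{i'}$; adding the integer $k$ changes nothing, and conjugating all torus coordinates by complex conjugation turns $(\vartheta,\mathsf A)$ into $(-\vartheta,\mathsf A)$, hence $B_i^{(n)}\cong B_{i'}^{(n')}$ and therefore $A_i^{(n)}=M_{C_i}(B_i^{(n)})\cong M_{C'_{i'}}(B_{i'}^{(n')})=A_{i'}^{(n')}$.

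Finally I would translate $\zeta_i=(\zeta'_{i'})^{\pm1}$, $C_i=C'_{i'}$ into the displayed product identities using $\zeta_i=(-1)^{C_i+1}\eta_i^{C_i}$ and the formula of \cite[pp.~165--166]{kR06} that $\eta_i^{C_i}=\lambda^{\binom{C_i}{i+1}}\mu_1^{\binom{C_i}{i}}\cdots\mu_i^{C_i}$: since $C_i=C'_{i'}$ the signs $(-1)^{C_i+1}$ cancel, and the cases $\zeta_i=\zeta'_{i'}$ and $\zeta_i=(\zeta'_{i'})^{-1}$ become exactly the two alternatives in the statement. The special case $\mathscr A_{n,\theta}\cong\mathscr A_{n',\theta'}$ is the instance $i=i'=0$, $C_i=C'_{i'}=1$, where the product collapses to $e^{2\pi i\theta}=e^{\pm2\pi i\theta'}$, i.e. $\theta=k\pm\theta'$ with $k\in\Bbb Z$, together with $n=n'$. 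The main obstacle is the second paragraph: one must be sure that, for fixed $n-i$, no finer $K$-theoretic datum (torsion in $K_0$, the whole group $K_1$, or the class $[1]$ beyond the multiple $C_i$) can separate the algebras, which reduces to extracting from \cite{kR06} that the integer matrix of $B_i^{(n)}$ does not depend on the structure constants; the number-theoretic reductions and the appeal to classification are then routine.
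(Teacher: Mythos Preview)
Your approach is essentially the paper's: invoke the Lin--Phillips classification, read off $n-i=n'-i'$ from the rank $a_{n-i}$ via Proposition~\ref{prop:2}, extract $C_i$ and $\vartheta_i\equiv\pm\vartheta'_{i'}\pmod{\Bbb Z}$ from the trace range $\frac{1}{C_i}(\Bbb Z+\Bbb Z\vartheta_i)$ computed in the preceding proposition, and then rewrite $\zeta_i=(\zeta'_{i'})^{\pm1}$ using the formula $\zeta_i=(-1)^{C_i+1}\eta_i^{C_i}=\lambda^{\binom{C_i}{i+1}}\mu_1^{\binom{C_i}{i}}\cdots\mu_i^{C_i}$ from \cite{kR06}. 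The paper's own proof is a two-line pointer to exactly these ingredients; you have simply unpacked them, and in particular you make explicit the converse direction (via complex conjugation on the torus and matrix-size matching) and the point that torsion and the class $[1]$ must also match, which the paper leaves to the reader by tacitly assuming from \cite{kR06} that the underlying integer matrix of $B_i^{(n)}$ depends only on $n-i$.
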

\begin{proof}
Use the previous proposition and the fact that $\{a_n \}$ is a strictly increasing 
sequence
(see Proposition \ref{prop:2}). Note that 
$$\zeta_i=(-1)^{C_i+1}\eta_i^{C_i}=
\lambda^{\binom{{C_i}}{i+1}}\mu_1^{\binom{{C_i}}{i}}
                 \mu_2^{\binom{{C_i}}{i-1}}\ldots\mu_{i}^{C_i}$$
                 by the last equation on p. 171 of \cite{kR06}.
\end{proof}
\begin{rema}
Note that $C_i=|\mathbf{Y}_{i}|$ is completely determined by the structural 
constants
$\lambda,\mu_1,\ldots,\mu_{i-1}$ (which are roots of unity). More precisely, by calculations on
p. 165-166 of \cite{kR06}, we have
$$C_i=\min\{r \in\Bbb{N}\mid\lambda^r=\lambda^{\binom{r}{2}}\mu_1^r=\ldots=
\lambda^{\binom{r}{i}}\mu_1^{\binom{r}{i-1}}
                 \mu_2^{\binom{r}{i-2}}\ldots\mu_{i-1}^r=1\}.$$
For an explicit example in lower dimensions, see \cite[Lemma 5.4]{pM97}.
\end{rema}


\begin{thebibliography}{99}

\bibitem{AP89}

J. Anderson, W. Paschke, \emph{The rotation algebra}, Houston J. Math.
{\bf 15}({\bf 1})  (1989), 1--26.

\bibitem{mA67}
M. Atiyah, \emph{$K$-theory}, W. A. Benjamin, New York, 1967.

\bibitem{bB98}
B. Blackadar, \emph{$K$-theory for operator algebras}, Second edition, Mathematical Sciences Research Institute Publications 5, Cambridge University Press, Cambridge, 1998.

%\bibitem{krD96}
%K. R. Davidson, \emph{$C^*$-algebras by example}, Fields Institute
%Monographs, Amer. Math. Soc. Providence, RI, 1996.

\bibitem{eE67}
E.G. Effros and F. Hahn, \emph{Locally compact transformation
groups and \text{$C^{*}$-algebras}}, Mem. Amer. Math. Soc. No. 75, 
Providence, RI (1967).

\bibitem{rcE68} 

R. C. Entringer, \emph{Representation of $m$ as $\sum_{k=-n}^n \epsilon_k k$},
Canad. Math. Bull. \textbf{11} (1968), 289--293. 

\bibitem{pE65}
P. Erd\H{o}s, \emph{Extremal problems in number theory}, 1965 Proc. Sympos. Pure Math., Vol. VIII, pp. 181--189, Amer. Math. Soc., Providence, R.I.

\bibitem{rE87}
R. Exel, \emph{Rotation numbers for automorphisms of $C^*$-algebras}, 
Pacific J. Math. {\bf 127}({\bf 7}) (1981), 31--89.

\bibitem{hF81}
H.\  Furstenberg, \emph{Recurrence in ergodic theory and combinatorial 
number theory}, Princeton University Press, Princeton, NJ, 1981.

\bibitem{hF61}
H.\  Furstenberg, {\emph{Strict ergodicity and transformation of the 
torus}}, Amer.\  J.\  Math.\  {\textbf{83}} (1961), 573--601.

\bibitem{vG97}
V. V. Gorbatsevich, A. L. Onishchik and E. B. Vinberg,
\emph{Foundations of Lie theory and Lie transformation groups},
  Encyclopaedia of Mathematical Sciences $20$, Springer-Verlag, Berlin 
Heidelberg, 1997.


%\bibitem{mG96}
%M. Goze and Y. Khakimdjanov, \emph{Nilpotent Lie algebras},
%Mathematics and its Applications $361$, Kluwer Academic Publishers, 
%Dordrecht, 1996.

\bibitem{wG78}
W. Greub, \emph{Multilinear algebra}, Second edition, Springer-Verlag, New York-Heidelberg, 1978.

\bibitem{fjH63}
F. J. Hahn, \emph{On affine transformations of compact abelian groups},
Amer. J. Math. {\bf 85} (1963), 428--446.

\bibitem{fjH65}
F. J. Hahn and W. Parry, \emph{Minimal systems with quasi-discrete spectrum},
J. London, Math. Soc. {\bf 40} (1965), 300--323.

\bibitem{lH67}
L. Hodgkin, \emph{On the $K$-theory of Lie groups}, Topology, {\bf 6} 
(1967), 1--36.

\bibitem{jeH78}
J. E. Humphreys, \emph{Introduction to Lie algebras and representation 
theory}, Second printing, revised, Graduate Texts in Mathematics $9$, Springer-Verlag, New York-Berlin, 1978. 

\bibitem{rJ86}
R. Ji, \emph{On the crossed product $C^*$-algebras associated with 
Furstenberg transformations on tori}, Ph.D. dissertation, State 
University of New York at Stony Brook, August 1986.

\bibitem{mK78}
M. Karoubi, \emph{$K$-theory}, Springer-Verlag, Berlin Heidelberg, 1978.

\bibitem{kK00}
K. Kodaka, \emph{The positive cones of $K_0$-groups of crossed products 
associated with Furstenberg transformations on the 2-torus},
Proc. Edinburgh Math. Soc. (2) {\bf 43} ({\bf 1}), (2000), 167--175. 

\bibitem{hL00}
H. Lin,  \emph{classification of simple $C^*$-algebras of tracial topological rank 
zero}, Duke Math. J. {\bf 125}({\bf 1}) (2004), 91--119.

\bibitem{qL02}
Q. Lin, N. C. Phillips, \emph{ 
Direct limit decomposition for $C^*$-algebras of minimal diffeomorphisms}, 
Operator algebras and applications, 107--133, Adv. Stud. Pure Math., 38, 
Math. Soc. Japan, Tokyo, 2004. 

\bibitem{LP10}
H. Lin, N. C. Phillips, \emph{Crossed products by minimal homeomorphisms}
J. Reine Angew. Math. \textbf{641} (2010), 95--122. 

\bibitem{qLP97}
Q. Lin, N. C. Phillips,\emph{
Ordered $K$-theory for $C^\ast$-algebras of minimal homeomorphisms}, Operator algebras and operator theory (Shanghai, 1997),  289--314, Contemp. Math., \textbf{228}, Amer. Math. Soc., Providence, RI, 1998. 

%\bibitem{aM49}
%A. Malcev, {\emph{On a class of homogeneous spaces}},
%Izvestia Acad. Nauk SSSR Ser. Math.  {\textbf{13}} (1949), 9--32;
%Amer. Math. Soc. Translations {\textbf{81}} (1949).
\bibitem{jhVL67}
J. H. van Lint, \emph{Representations of $0$ as $\sum_{k=-N}^N \epsilon_k k$},
Proc. Amer. Math. Soc. {\bf 18} (1967), 182--184.

\bibitem{pM02}
P. Milnes, \emph{Groups and $C^*$-algebras from a $4$-dimensional Anzai flow},
Math. Nachr. \textbf{235} (2002), 129--141. 

\bibitem{pM93}
P. Milnes and S. Walters, \emph{Simple quotients of the group $C^*$-algebra of 
a discrete $4$-dimensional nilpotent group}, Houston J. Math. {\bf 19} (1993), 
615--636.

\bibitem{pM97}
P. Milnes and S. Walters, \emph{Simple infinite dimensional quotients of 
$C^*(G)$ for discrete $5$-dimensional nilpotent groups $G$}, 
Illinois J. Math. {\bf 41}({\bf 2}) (1997), 315--340.


%\bibitem{pM02}
%P. Milnes,  \emph{Groups and $C^*$-algebras from a
%$4$-dimensional Anzai flow}, Math. Nachr. {\bf 235} (2002), 129--141.

\bibitem{mN72}
M. Newman, \emph{Integral matrices}, Pure and Applied Mathematics $45$, Academic Press, New York and London, 1972.

%\bibitem{oN83}
%O. Nielsen, \emph{Unitary representations and coadjoint orbits of low 
%dimensional nilpotent Lie groups}, Queen's Papers in Pure and Applied 
%Mathematics $63$, Queen's University, 1983.

\bibitem{jP86}
J. A. Packer, \emph{$K$-theoretic invariants for $C^*$-algebras associated 
to transformations and induced flows}, J. Funct. Anal. {\bf 67} (1986), 25--59.

%\bibitem{jP88}
%J. A. Packer, \emph{Strong Morita equivalence for Heisenberg $C^*$-algebras 
%and the positive cones of their $K_0$-groups}, Canad. J. Math. 
%{\bf 40}({\bf 4}) (1988), 833--864.

\bibitem{ncP07}
N. C. Phillips, \emph{Cancellation and stable rank for direct limits of recursive subhomogeneous algebras },
Trans. Amer. Math. Soc.  {\bf 359}  (2007), 4625--4652. 

\bibitem{ncp07}
N. C. Phillips, \emph{Examples of different minimal diffeomorphisms giving the same $C^*$-algebras}, 
Israel J. Math.  \textbf{160}  (2007), 189–-217.

\bibitem{mP80}
M. Pimsner and D. Voiculescu, \emph{Exact sequences for $K$-groups and 
\text{$Ext$-groups} of certain crossed product $C^{*}$-algebras}, J. 
Operator Theory, {\bf 4} (1980), 93--118.

\bibitem{scP78}
S. C. Power, \emph{Simplicity of $C^*$-algebras associated to minimal 
dynamical systems}, J. London Math. Soc. {\bf 18} (1978), 534--538.

\bibitem{kR06}
K. Reihani and P. Milnes, \emph{Analysis on discrete cocompact subgroups of
the generic filiform Lie groups}, Acta. Math. Hung., {\bf 112}  ({\bf 1-2}) (2006), 157--179.

\bibitem{mR81}
M. Rieffel, \emph{$C^*$-algebras associated with irrational rotations},
Pacific J. Math. {\bf 93}({\bf 2}) (1981), 415--429.

\bibitem{jR84}
J. Rosenberg, \emph{Group $C^*$-algebras and topological invariants}, in 
Operator algebras and group representations, vol. I, Pitman, London, 1984, 
pp. 95--115.

\bibitem{cS82}
C. Schochet, \emph{Topological methods for $C^*$-algebras II: Geometric resolutions and the K{\"u}nneth formula}, Pacific J. Math. {\bf 98} ({\bf2}) (1982), 443–-458. 

\bibitem{jlT75}
J. L. Taylor, \emph{Banach algebras and topology}, Algebras in analysis,
(J. H. Williamson, ed.), Academic Press, London, 1975, pp. 118--186.

\bibitem{jT87}
J. Tomiyama, \emph{Invitation to $C^*$-algebras and topological dynamics},
Advanced Series in Dynamical Systems, vol. 3, World Scientific, Singapore, 
1987.

\bibitem{sW02}
S. Walters, \emph{$K$-groups and classification of simple quotients of
group \text{$C^*$-algebras} of certain discrete $5$-dimensional nilpotent 
groups}, Pacific J. Math. {\bf 202}({\bf 2}) (2002), 491--509.

\end{thebibliography}
\end{document}